\documentclass[a4paper, UKenglish, cleveref, autoref, thm-restate]{lipics-v2021}

\graphicspath{{./figs/}}

\usepackage[normalem]{ulem}
\usepackage{tikz}
\usepackage{mathrsfs}
\usepackage{enumerate}
\usepackage{mathdots}
\usepackage{todonotes}
\usepackage{algorithm}  
\usepackage{algpseudocode} 

\newcommand{\cvec}{\mathrm{\mathbf{vec}}}
\newcommand{\Pvec}{\mathrm{\mathbf{Pvec}}}
\newcommand{\SPvec}{\mathrm{\mathbf{SPvec}}}
\newcommand{\barc}{\mathrm{barc}}
\newcommand{\kf}{\mathrm{\mathbf{k}}}

\newcommand{\mrk}{\mathrm{\mathbf{mrk}}}
\newcommand{\mdgm}{\mathrm{\mathbf{mdgm}}}
\newcommand{\rank}{\mathrm{rank}}

\newcommand{\im}{\mathrm{im}}
\newcommand{\Int}{\mathrm{\mathbf{Int}}}
\newcommand{\id}{\mathrm{id}}

\newcommand{\Z}{\mathbb{Z}}
\newcommand{\R}{\mathbb{R}}

\newcommand{\beps}{\overline{\epsilon}}

\newcommand{\dI}{\mathrm{d_I}}

\newcommand{\dE}{\mathrm{d_E}}
\newcommand{\dES}{\mathrm{d}_\mathrm{E}^S}

\newcommand{\PN}{\mathrm{PN}}
\newcommand{\Dgm}{\mathrm{Dgm}}

\bibliographystyle{plainurl}

\newcommand{\para}[1]{\noindent{\textbf{\sffamily{#1}}}}

\title{Meta-Diagrams for 2-Parameter Persistence} 

\titlerunning{Meta-Diagrams for Persistence}

\author{Nate Clause}{Ohio State University, Columbus, OH, USA}{clause.15@osu.edu}{}{NC is partially supported by NSF CCF 1839356 and 
NSF DMS 1547357.}

\author{Tamal K. Dey}{Purdue University, West Lafayette, IN, USA}{tamaldey@purdue.edu}{}{TD is partially supported by NSF CCF 2049010.}

\author{Facundo M\'{e}moli}{Ohio State University, Columbus, OH, USA}{memoli@math.osu.edu}{}{FM is partially supported by BSF 2020124, NSF CCF 1740761, NSF CCF 1839358, and NSF IIS 1901360.}

\author{Bei Wang}{University of Utah, Salt Lake City, UT, USA}{beiwang@sci.utah.edu}{}{BW is partially supported by NSF IIS 2145499, NSF IIS 1910733, and DOE DE SC0021015.}

\authorrunning{N.\,Clause, T.\,K.\,Dey, F.\,M\'{e}moli and B.\,Wang} 

\Copyright{Nate Clause, Tamal K. Dey, Facundo M\'{e}moli, and Bei Wang} 

\ccsdesc[100]{Theory of computation $\to$ Computational geometry, Mathematics of computing $\to$ Topology}

\keywords{Multiparameter persistence modules, persistent homology, M\"obius inversion, barcodes, computational topology, topological data analysis} 

\category{} 

\relatedversion{} 

\acknowledgements{}
\nolinenumbers 

\EventEditors{Erin W. Chambers and Joachim Gudmundsson}
\EventNoEds{2}
\EventLongTitle{39th International Symposium on Computational Geometry (SoCG 2023)}
\EventShortTitle{SoCG 2023}
\EventAcronym{SoCG}
\EventYear{2023}
\EventDate{June 12--15, 2023}
\EventLocation{Dallas, Texas, USA}
\EventLogo{}
\SeriesVolume{258}
\ArticleNo{27}

\begin{document}

\maketitle

\begin{abstract}
We first introduce the notion of meta-rank for a 2-parameter persistence module, an invariant that captures the information behind images of morphisms between 1D slices of the module. 
We then define the meta-diagram of a 2-parameter persistence module to be the M\"{o}bius inversion of the meta-rank, resulting in a function that takes values from signed 1-parameter persistence modules. 
We show that the meta-rank and meta-diagram contain information equivalent to the rank invariant and the signed barcode. This equivalence leads to computational benefits, as we introduce an algorithm for computing the meta-rank and meta-diagram of a 2-parameter module $M$ indexed by a bifiltration of $n$ simplices in $O(n^3)$ time. This implies an improvement upon the existing algorithm for computing the signed barcode, which has $O(n^4)$ runtime. This also allows us to improve the existing upper bound on the number of rectangles in the rank decomposition of $M$ from $O(n^4)$ to $O(n^3)$. 
In addition, we define notions of erosion distance between meta-ranks and between meta-diagrams, and show that under these distances, meta-ranks and meta-diagrams are stable with respect to the interleaving distance. 
Lastly, the meta-diagram can be visualized in an intuitive fashion as a persistence diagram of diagrams, which generalizes the well-understood persistence diagram in the 1-parameter setting.

\end{abstract}

\section{Introduction}
\label{sec:introduction} 

In the case of a 1-parameter persistence module, the persistence diagram (or barcode) captures its complete information up to isomorphism via a collection of intervals. 
The persistence diagram is represented as a multi-set of points in the plane, whose coordinates are the birth and death times of intervals, each of which encodes the lifetime of a topological feature. 
This compact representation of a persistence module enables its  interpretability and facilitates its visualization.
When moving to the multiparameter setting, the situation becomes much more complex as a multiparameter persistence module may contain indecomposable pieces that are not entirely determined by  intervals or do not admit a finite discrete description~\cite{carlsson2009theory}.

Such an increased complexity has led to the study of other invariants for multiparameter persistence modules.
The first invariant is the \emph{rank invariant}~\cite{carlsson2009theory}, which captures the information from the  images of internal linear maps in a persistence module across all  dimensions. 
Patel noticed that the persistence diagram in the 1-parameter setting is equivalent to the \emph{M\"{o}bius inversion}~\cite{rota1964foundations} of the rank function~\cite{patel2018generalized}.
He then defined the generalized persistence diagram as the M\"{o}bius inversion of a function defined on a subset of intervals of $\R$, denoted $\Dgm$, with values in some abelian group.  

The idea of M\"{o}bius inversion has been extended in many directions. 
Kim and M\'{e}moli defined generalized persistence diagrams for modules on posets \cite{clause2022discriminating,kim2021generalized}.
Patel and McCleary extended Patel's generalized persistence diagrams to work for persistence modules indexed over finite lattices \cite{mccleary2022edit}.
Botnan et al. \cite{botnan2022rectangle} implicitly studied the M\"{o}bius inversion of the rank function for 2-parameter modules, leading to a notion of a diagram with domain all rectangles in $\Z^2$.
Asashiba et al. used M\"{o}bius inversion on a finite 2D grid to define interval-decomposable approximations \cite{asashiba2019approximation}.
Morozov and Patel \cite{morozov2021output} defined a generalized persistence diagram in the 2-parameter setting via M\"{o}bius inversion of the birth-death function and provided an algorithm for its computation.
Their algorithm has some similarity with ours: it utilizes the vineyards algorithm \cite{cohen2006vines} to study a 2-parameter persistence module, by slicing it over 1D paths.

Our work also involves the idea of slicing a 2-parameter module.
This idea of slicing appears in the fibered barcode \cite{cerri2013betti, lesnick2015interactive}, which is equivalent to the rank function.
To obtain insight into the structure of a 2-parameter persistence module $M$, Lesnick and Wright~\cite{lesnick2015interactive} explored a set of 1-parameter modules obtained via restricting $M$ onto all possible lines of non-negative slope. 
Buchet and Escolar~\cite{BuchetEscolar2020} showed that any 1-parameter persistence module with finite support could be found as a restriction of some indecomposable 2-parameter persistence module with finite support. 
Furthermore, Dey et al. \cite{DKM22} showed that certain zigzag (sub)modules
of a 2-parameter module can be used to compute the generalized rank invariant, whose M\"{o}bius inversion is the generalized persistence diagram defined by Kim and M\'{e}moli. 
Our work considers the images between slices of a 2-parameter module, which is related to the work by Bauer and Schmal \cite{bauer2022efficient}.    

In \cite{botnan2022signed}, Botnan et al.~introduced the notion of \emph{rank decomposition}, which is equivalent to the generalized persistence diagram formed by M\"{o}bius inversion of the rank function, under some additional conditions. 
Botnan et al. further demonstrated that the process of converting a module to a rank decomposition is stable with respect to the matching distance~\cite{landi2018rank}.
Additionally, they introduced a visualization of this rank decomposition via a \emph{signed barcode}, which highlights the diagonals of rectangles appearing in the rank decomposition, along with their multiplicity.
They visualized the value of the signed barcode with a $2$-parameter persistence module generated by clustering a point cloud with a scale and a density parameter.

\begin{figure}[t]
    \centering
    \includegraphics[width=.45\linewidth]{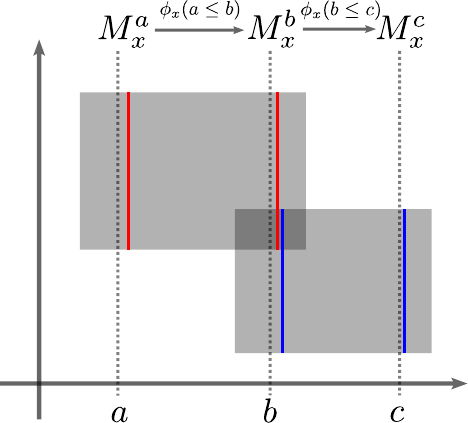}
    \caption{Slicing a 2-parameter module $M$ along vertical lines yields 1-parameter modules, such as $M_x^a, M_x^b$, and $M_x^c$. 
    There are morphisms between these 1-parameter modules induced by the internal morphisms of $M$, and the meta-rank captures the information about these morphisms.
    For example, if $M$ is defined as the direct sum of the two interval modules given by the two shaded rectangles, then the meta-rank of $M$ on $[a,b)$ is the image of $\phi_x(a\leq b)$, which has a barcode consisting of the red interval.
   The meta-rank of $M$ on $[b,c)$ has a barcode consisting of the blue interval, and the meta-rank of $M$ on $[a,c)$ is $0$, as $\phi_x(a\leq c) = \phi_x(b\leq c)\circ \phi_x(a\leq b)=0$.}
    \label{fig:mrk-example-1}
\end{figure}

Unlike the previous results that perform M\"{o}bius inversion over a higher-dimensional poset such as $\Z^2$, our work involves M\"{o}bius inversion over a finite subcollection of intervals of $\R$, which leads to a simpler inversion formula.
In this work, we introduce the notion of \emph{meta-rank} for a $2$-parameter persistence module, which is a map from 
$\Dgm$ to isomorphism classes of persistence modules.
Instead of looking at images of linear maps between vector spaces (as with the usual rank invariant), the meta-rank considers images of the maps between 1-parameter persistence modules formed by slicing a $2$-parameter persistence module along vertical and horizontal lines, see~\autoref{fig:mrk-example-1}.
We then define the meta-diagram as the M\"{o}bius inversion of the meta-rank, giving a map from $\Dgm$ to isomorphism classes of signed persistence modules.
This contrasts Botnan et al.'s approach~\cite{botnan2022signed} of using M\"{o}bius inversion in 2D, as our M\"{o}bius inversion formula over $\Dgm$ is simpler and consists of fewer terms. 
	
\begin{figure}[t]
    \centering
    \includegraphics[width=0.6\linewidth]{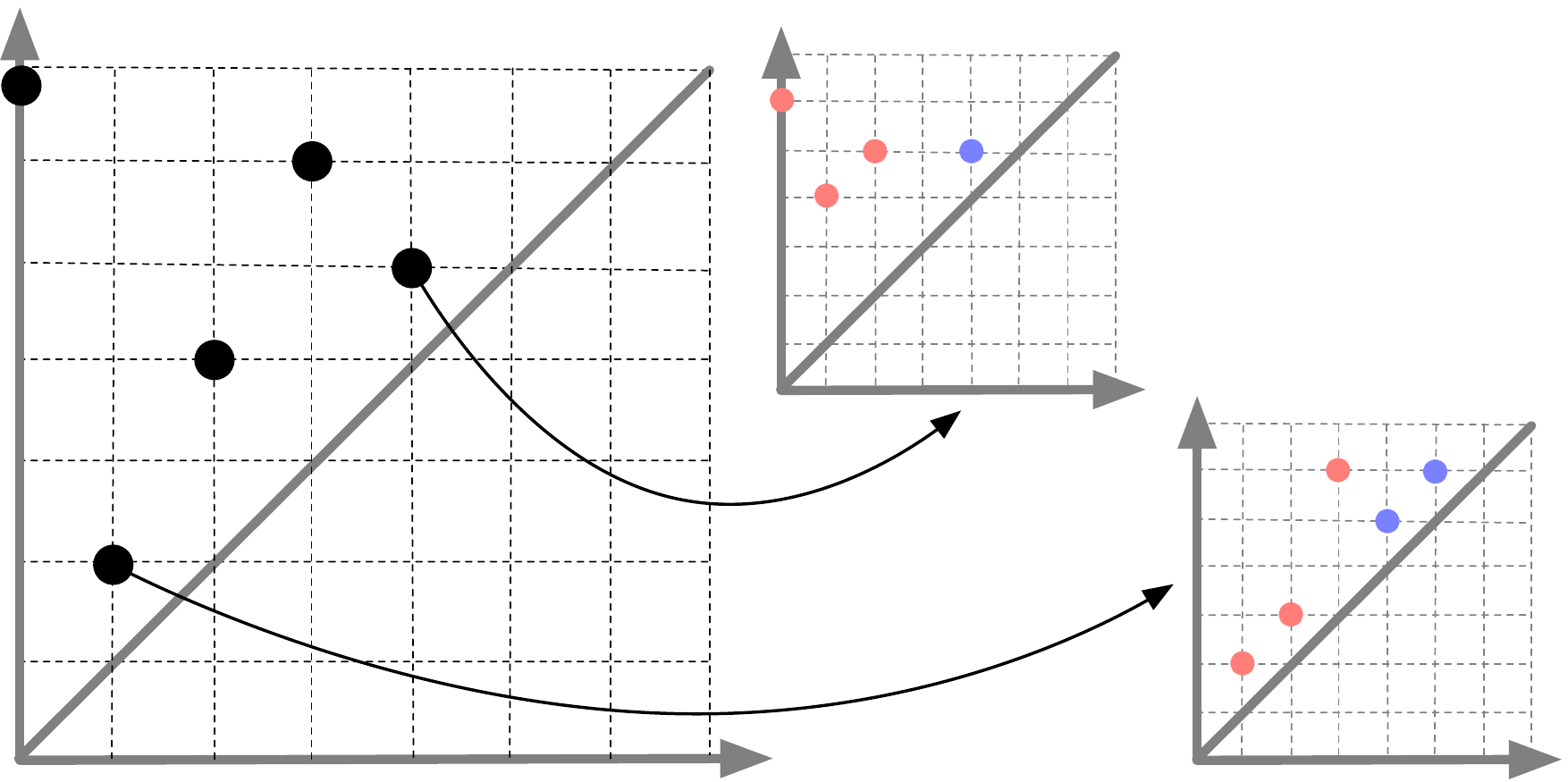}
    \caption{A meta-diagram viewed as a persistence diagram of signed diagrams (red and blue mean positive and negative signs respectively).}
    \label{fig:diagram-of-diagrams}
\end{figure}

\para{Contributions.}
The meta-rank and meta-diagram turn out to contain information equivalent to the rank invariant (\autoref{prop:mrk-rk-equivalence}) and signed barcode (\autoref{prop:mdgm-rkdecompequiv}) respectively. 
Therefore, both meta-rank and meta-diagram can be regarded as these known invariants seen from a different perspective.
However, this different viewpoint brings forth several advantages as listed below that
make the meta-rank and meta-diagram stand out on their own right:
\begin{enumerate}
\item The meta-rank and meta-diagram of a $2$-parameter persistence module $M$ induced by a bifiltration of a simplicial complex with $n$ simplices can be computed in $O(n^3)$ time. 
\item This immediately implies an improvement of the $O(n^4)$ algorithm of Botnan et al.~\cite{botnan2022signed} for computing the signed barcodes.
\item The $O(n^3)$ time algorithm for computing meta-rank and meta-diagram also implicitly improves
    the bound on the number of signed bars in the rank decomposition of $M$ to $O(n^3)$ from the current known bound of $O(n^4)$. This addresses an open question whether
    the size of the signed barcode is bounded tightly
    by the number of rectangles or not.
\item The meta-diagram can be viewed as a persistence diagram of signed diagrams as illustrated in~\autoref{fig:diagram-of-diagrams}. Such an intuitive visualization generalizes the classic persistence diagram -- a known technique in topological data analysis (TDA) -- to summarize persistent homology.  
\item The meta-diagram also generalizes the concept of a sliced barcode well-known in TDA~\cite{lesnick2015interactive}. It  ensembles sliced bars on a set of lines, but not forgetting the maps between slices induced by the module $M$ being sliced.
\end{enumerate}

\section{Preliminaries}
\label{sec:prelim}

We regard a poset $(P,\leq)$ as a category, with objects the elements $p\in P$, and a unique morphism $p\to q$ if and only if $p\leq q$; this is referred to as the \emph{poset category} for $(P,\leq)$. When it is clear from the context, we will denote the poset category by $P$.

Fix a field $\kf$, and assume all vector spaces have coefficients in $\kf$ throughout this paper.
Let $\cvec$ denote the category of finite-dimensional vector spaces with linear maps between them. 
A \emph{persistence module}, or \emph{module} for short, is a functor $M:P\to \cvec$.
For any $p\in P$, we denote the vector space $M_p:= M(p)$, and for any $p\leq q\in P$, we denote the linear map $\varphi_M(p\leq q):= M(p\leq q)$.
When $M$ is apparent, we drop the subscript from $\varphi_M$.
We call $P$ the \emph{indexing poset} for $M$. 
We focus on the cases when the indexing poset is $\R$ or $\R^2$, equipped with the usual order and product order, respectively.
Definitions and statements we make follow analogously when the indexing poset is $\Z$ or $\Z^2$, which we will cover briefly in~\autoref{sec:algorithms}. 
If the indexing poset for $M$ is $P\subseteq\R$, then $M$ is a \emph{1-parameter (or 1D) persistence module}.
If the indexing poset for $M$ is $P\subseteq\R^2$, with $P$ not totally-ordered, then $M$ is a \emph{2-parameter (or 2D) persistence module}, or a \emph{bimodule} for short.

Following~\cite{mccleary2020bottleneck}, we  require that persistence modules be \emph{constructible}: 
\begin{definition}
\label{def:constructible}
A module $M:\R\to \cvec$ is \emph{constructible} if there exists a finite set $S:=\{s_1<\ldots<s_n\}\subset \R$ such that:
\begin{itemize}
    \item For $a<s_1$, $M(a)=0$;
    \item For $s_i\leq a\leq b<s_{i+1}$, $\varphi_M(a\leq b)$ is an isomorphism;
    \item For $s_n\leq a\leq b$, $\varphi_M(a\leq b)$ is an isomorphism.
\end{itemize}
Similarly, a bimodule $M:\R^2\to \cvec$ is \emph{constructible} if there exists a finite set
$S:=\{s_1<\ldots<s_n\}\subset \R$ such that:
\begin{itemize}
    \item If $x<s_1$ or $y<s_1$, then $M((x,y))=0$,
    \item For $s_i\leq x_1\leq x_2<s_{i+1}$ and $s_j\leq y_1\leq y_2<s_{j+1}$, $\varphi_M((x_1,y_1)\leq (x_2,y_2))$ is an isomorphism,
    \item If $x_1\geq s_n$ or $y_1\geq s_n$ and $(x_1,y_1)\leq (x_2,y_2)$, then $\varphi_M((x_1,y_1)\leq (x_2,y_2))$ is an isomorphism.
\end{itemize}
In either case, such a module is \emph{$S$-constructible}.
\end{definition}

If a module is $S$-constructible, unless otherwise stated, assume $S=\{s_1<\ldots<s_n\}$. 
If $M$ is $S$-constructible, then $M$ is $S'$-constructible for any $S'\supseteq S$.
For the rest of the paper, we assume any given persistence module is constructible.

Of particular importance in the study of 1- and 2-parameter  persistence modules are the notions of interval modules and interval decomposable modules.
We state the definitions:

\begin{definition}
For a poset $(P,\leq)$, an \emph{interval} of $P$ is a non-empty subset $I\subset P$ such that:
\begin{itemize}
    \item (convexity) If $p,r\in I$ and $q\in P$ with $p\leq q\leq r$, then $q\in I$.
    \item (connectivity) For any $p,q\in I$, there is a sequence $p=r_0,r_1,\ldots,r_n=q$ of elements of $I$, where for all $0\leq i\leq n-1$, either $r_i\geq r_{i+1}$ or $r_i\leq r_{i+1}$. 
\end{itemize}
We denote the collection of all intervals of $P$ as $\Int(P)$.
\end{definition}

For $I\in \Int(P)$, the \emph{interval module} $\kf^I$ is the persistence module indexed over $P$, with:
\[\kf^I_p = \begin{cases} \kf & \mathrm{if \, } p\in I\\
0 & \mathrm{otherwise}
\end{cases},
\hspace{10mm}
\varphi_{\kf^I}(p\leq q) = \begin{cases} \id_\kf & \mathrm{if \, } p\leq q\in I\\
0 & \mathrm{otherwise}\end{cases}.\]

Given any $M,N:P\to \cvec$, the \text{direct sum} $M\oplus N$ is defined point-wise at each $p\in P$.
We say a nontrivial $M:P\to \cvec$ is \emph{decomposable} if $M$ is isomorphic to $N_1\oplus N_2$ for some non-trivial $N_1,N_2:P\to \cvec$, which we denote by $M\cong N_1\oplus N_2$.
Otherwise, $M$ is \emph{indecomposable}.
Interval modules are indecomposable \cite{botnan2018algebraic}.

A persistence module $M:P\to \cvec$ is \emph{interval decomposable} if it is isomorphic to a direct sum of interval modules. 
That is, if there is a multiset of intervals $\barc(M)$, such that:
\[M \cong \bigoplus_{I\in \barc(M)} \kf^I\]
If this multiset exists, we call it the \emph{barcode} of $M$. 
If it exists, $\barc(M)$ is well-defined as a result of the Azumaya-Krull-Remak-Schmidt theorem \cite{azumaya1950corrections}. 
Thus, in the case where $M$ is interval decomposable, $\barc(M)$ is a complete descriptor of the isomorphism type of $M$.

Of particular importance in this work are \emph{right-open rectangles}, which are intervals $R\subset \R^2$ of the form $R=[a_1,b_1)\times [a_2,b_2)$.
If $M$ can be decomposed as a direct sum of interval modules $\kf^R$ with $R$ a right-open rectangle, then we say $M$ is \emph{rectangle decomposable}.

1-parameter persistence modules are particularly nice, as they are always interval decomposable \cite{crawley2015decomposition}. 
As a result, the barcode is a complete invariant for 1-parameter  persistence modules.
On the other hand, bimodules do not necessarily decompose in this way.
In fact, there is no complete discrete descriptor for bimodules~\cite{carlsson2009theory}.

A number of invariants have been proposed to study bimodules. 
One of the first and the most notable invariant is the \emph{rank invariant} \cite{carlsson2009theory} recalled in~\autoref{def:rank-invariant}. 
\begin{definition}[\cite{carlsson2009theory}]
\label{def:rank-invariant}
For $P$ a poset, define $\mathbf{D}(P):=\{(a,b)\in P\times P  \, | \, a\leq b\}$. For $M:P\to \cvec$, the \emph{rank invariant of} $M$, $\rank_M:\mathbf{D}(P)\to \Z_{\geq 0}$, is defined point-wisely as: 
\[\rank_M(a,b):= \rank(\varphi_M(a\leq b))\]
\end{definition}

For a bimodule, the rank invariant is inherently a 4D object, making it difficult to visualize directly. 
RIVET \cite{lesnick2015interactive} visualizes the rank invariant indirectly through the fibered barcode. 
In \cite{botnan2022signed}, Botnan et al. defined the \emph{signed barcode} based on the notion of a \emph{rank decomposition}: 
\begin{definition}[\cite{botnan2022signed}]
\label{def:rankdecomp}
Let $M:\R^n\to \cvec$ be a persistence module with rank function $\rank_M$.
Suppose $\mathscr{R},\mathscr{S}$ are multisets of intervals from $\R^n$.
Define $\kf_\mathscr{R}:= \oplus_{I\in \mathscr{R}} \kf^R$, and similarly $\kf_\mathscr{S}$.
Then $(\mathscr{R},\mathscr{S})$ is a \emph{rank decomposition} for $\rank_M$ if as integral functions:
\[\rank_M = \rank_\mathscr{R}-\rank_\mathscr{S}.\]
\end{definition}

If $\mathscr{R},\mathscr{S}$ consist of right-open rectangles, then  the pair is a rank decomposition by rectangles.
We have:
\begin{theorem}[\cite{botnan2022signed}, Theorem 3.3]
Every finitely presented $M:\R^2\to \cvec$ admits a unique minimal rank decomposition by rectangles.
\end{theorem}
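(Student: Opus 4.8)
The plan is to prove existence and uniqueness together by a single \emph{finite} M\"obius inversion, performed over a poset of rectangles rather than over $\Z^2$, using constructibility of $M$ to reduce everything to a finite computation.

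\textbf{Setup.} A finitely presented $M$ is $S$-constructible for some finite $S=\{s_1<\dots<s_n\}\subset\R$; adjoining $+\infty$ to $S$, both $\rank_M$ and $\rank_{\kf^R}$ (for any right-open rectangle $R$ with corners on the grid) are constant on each product of half-open cells $[s_i,s_{i+1})$. Hence any identity between $\Z$-valued functions on $\mathbf{D}(\R^2)$ of this type needs only be checked on the finite set $G$ of pairs $(a,b)$ with $a\le b$ and $a,b$ cell representatives, and then holds on all of $\mathbf{D}(\R^2)$. Let $\mathcal{R}$ be the finite collection of right-open rectangles with corners on the grid, ordered by inclusion. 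Each element of $G$ determines a rectangle in $\mathcal{R}$, and, crucially because $a\le b$, the order ``$a'\le a$ and $b\le b'$'' on $G$ corresponds under this bijection to inclusion of rectangles; so $G$ is a finite poset and its incidence algebra carries a M\"obius function.

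\textbf{Existence.} The basic observation is that for any interval $I$ one has $\rank_{\kf^I}(a,b)=\mathbf{1}[a\in I]\cdot\mathbf{1}[b\in I]$, since $\varphi_{\kf^I}(a\le b)=\id_\kf$ when $a,b\in I$ and is $0$ otherwise; and for a right-open rectangle $R$, which is order-convex, ``$a,b\in R$'' is equivalent to ``$[a,b]\subseteq R$'' whenever $a\le b$. Consequently, for any finitely supported $\mu\colon\mathcal{R}\to\Z$ with positive and negative parts $\mu_+,\mu_-$ (regarded as multisets of rectangles $\mathscr{R},\mathscr{S}$), the cumulative-sum (zeta) transform $(a,b)\mapsto\sum_{R:\,a,b\in R}\mu(R)$ equals $\sum_R\mu(R)\,\mathbf{1}[a\in R]\,\mathbf{1}[b\in R]=\rank_\mathscr{R}(a,b)-\rank_\mathscr{S}(a,b)$. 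Now let $\mu$ be the M\"obius transform of $\rank_M$ on $G$; by M\"obius inversion $\rank_M$ is the zeta transform of $\mu$, so with $\mathscr{R}:=\mu_+$ and $\mathscr{S}:=\mu_-$ we get $\rank_M=\rank_\mathscr{R}-\rank_\mathscr{S}$ on $G$, hence on all of $\mathbf{D}(\R^2)$. This is a rank decomposition by rectangles with $\mathscr{R}\cap\mathscr{S}=\emptyset$, since $\mu_+$ and $\mu_-$ have disjoint support.

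\textbf{Uniqueness and minimality.} Say a rank decomposition is \emph{minimal} if $\mathscr{R}\cap\mathscr{S}=\emptyset$, equivalently if $|\mathscr{R}|+|\mathscr{S}|$ is smallest possible. The assignment $\mathscr{A}\mapsto\rank_\mathscr{A}$ on multisets of grid-rectangles is injective: the identity above applied to the nonnegative multiplicity function $\mathbf{m}_\mathscr{A}$ shows $\rank_\mathscr{A}$ is the zeta transform of $\mathbf{m}_\mathscr{A}$, so $\mathbf{m}_\mathscr{A}$ is recovered as its M\"obius transform. Therefore, if $(\mathscr{R},\mathscr{S})$ and $(\mathscr{R}',\mathscr{S}')$ are both rank decompositions of $\rank_M$, then $\rank_{\mathscr{R}\sqcup\mathscr{S}'}=\rank_M+\rank_\mathscr{S}+\rank_{\mathscr{S}'}=\rank_{\mathscr{R}'\sqcup\mathscr{S}}$, so $\mathscr{R}\sqcup\mathscr{S}'=\mathscr{R}'\sqcup\mathscr{S}$ as multisets, i.e.\ $\mathbf{m}_\mathscr{R}-\mathbf{m}_\mathscr{S}=\mathbf{m}_{\mathscr{R}'}-\mathbf{m}_{\mathscr{S}'}$ in the free abelian group on $\mathcal{R}$. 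If both decompositions are minimal, then for each the pair of multiplicity functions is precisely the decomposition of this common element into its (unique) nonnegative and nonpositive parts, forcing $\mathscr{R}=\mathscr{R}'$ and $\mathscr{S}=\mathscr{S}'$. In particular the minimal decomposition coincides with $(\mu_+,\mu_-)$, and every decomposition has total size at least $\sum_R|\mu(R)|$.

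\textbf{Expected main obstacle.} The genuinely delicate part is the setup: choosing the finite grid and the half-open-rectangle poset so that (i) the poset is finite, so that M\"obius inversion is actually available; (ii) inclusion of rectangles matches the order ``$a'\le a,\ b\le b'$'' on pairs (this is where half-open rectangles, and the hypothesis $a\le b$, are used); and (iii) an identity checked only on cell representatives really does propagate to all of $\mathbf{D}(\R^2)$. Once this bookkeeping is in place, existence is an immediate consequence of M\"obius inversion together with the indicator identity for $\rank_{\kf^R}$, and uniqueness reduces to the standard fact that an element of a free abelian group has a unique expression as a difference of nonnegative parts with disjoint support. (An alternative, following Botnan et al., is to build a \emph{rank-exact} resolution of $M$ by rectangle-decomposable modules and read off $\mathscr{R},\mathscr{S}$ from its even and odd terms; the M\"obius-inversion argument above is shorter and closer to the viewpoint of the present paper.)
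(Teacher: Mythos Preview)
This theorem is not proved in the paper; it is quoted verbatim from \cite{botnan2022signed} (their Theorem~3.3) and used as a black box. There is therefore no in-paper argument to compare your attempt against.

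Your M\"obius-inversion argument is sound and, fittingly, much closer in spirit to the viewpoint of the present paper than to Botnan et al.'s original proof, which proceeds via rank-exact resolutions (as you note in your parenthetical). Two small points are worth tightening. First, in the uniqueness step you compare against an arbitrary competing minimal decomposition $(\mathscr{R}',\mathscr{S}')$; your injectivity argument for $\mathscr{A}\mapsto\rank_\mathscr{A}$ needs these multisets to be finite so that a common refining grid exists---this follows from the finitely-presented hypothesis but deserves a sentence. Second, the reason M\"obius inversion on $G$ behaves like inversion on a product of four chains is that for $(a,b)\le(a',b')$ in $G$ one has $a'\le a\le b\le b'$, so the constraint $a'\le b'$ is automatic and every interval $[(a,b),(a',b')]$ in $G$ is a genuine product of chain-intervals; you gesture at this, but it is exactly the place where a careless setup would fail, so it is worth stating explicitly.
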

Here minimality comes in the sense that $\mathscr{R}\cap \mathscr{S}=\emptyset$.
The signed barcode then visualizes the rank function in $
\R^2$ by showing the diagonals of the rectangles in $\mathscr{R}$ and $\mathscr{S}$.

\section{Meta-Rank}
\label{sec:meta-rank}

In this section, we introduce the \emph{meta-rank}.
While the rank invariant captures the information of images between pairs of vector spaces in a persistence module, the meta-rank captures the information of images between two 1-parameter persistence modules obtained via slicing a bimodule. 
We describe the results for modules over $\R^2$ and $\R$, but they hold in direct analogue in the $\Z^2$ and $\Z$ setting, which are  briefly covered in \autoref{sec:algorithms}.
For omitted proofs, see \autoref{sec:meta-rank-details}.
We begin with some preliminary definitions:

\begin{definition}
Let $M:\R^2\to \cvec$ be a bimodule.
For $s\in \R$, define the vertical slice $M^s_x:\R\to \cvec$ point-wise as $M^s_x(a) := M(s,a)$, and with morphisms from $a$ to $b$ as $\varphi^s_x(a\leq b):= \varphi((s,a)\leq (s,b))$.
Analogously, define the horizontal slice $M^s_y:\R\to \cvec$ by setting $M^s_y(a):= M(a,s)$ and $\varphi^s_y(a\leq b):= \varphi((a,s)\leq (b,s))$ for all $a\leq b\in \R$.
\end{definition}

Define a morphism of 1-parameter persistence modules $\phi_x(s\leq t):M^s_x\to M^t_x$ for $s\leq t\in \R$ by $\phi_x(s\leq t)(a):= \varphi((s,a)\leq(t,a))$. 
Analogously, define $\phi_y(s\leq t):M^s_y\to M^t_y$ for $s\leq t\in \R$ by $\phi_y(s\leq t)(a):= \varphi((a,s)\leq(a,t))$.

Denote by $\Pvec$ the isomorphism classes of persistence modules over $\R$. 
Each element of $\Pvec$ can be uniquely represented by its barcode, which is what we do in practice.
We recall the definition of $\Dgm$ from \cite{patel2018generalized}, which serves as the domain for the meta-rank:
\begin{definition}[\cite{patel2018generalized}]\label{def:dgm}
Define $\Dgm$ as the poset of all half-open intervals $[p,q)\subset \R$ for $p<q$, and all half-infinite intervals $[p,\infty)\subset \R$. 
The poset relation is inclusion.
\end{definition}

\begin{definition}
\label{def:mrk}
Suppose $M:\R^2\to \cvec$ is $S$-constructible.
Define the \emph{horizontal meta-rank} $\mrk_{M,x}:\Dgm\to \Pvec$ as follows:
\begin{itemize}
    \item For $I=[s,s_i)$ with $s_i\in S$, $\mrk_{M,x}(I):= [\im(\phi_x(s\leq s_i-\delta))]$, for some $\delta>0$ such that $s_i-\delta\geq s$ and $s_i-\delta\geq s_{i-1}$.
    \item For $I=[s,\infty)$, $\mrk_{M,x}(I):=[\im(\phi_x(s\leq s_n))]$.
    \item For all other $I=[s,t)$, $\mrk_{M,x}(I):=[\im(\phi_x(s\leq t))]$.
\end{itemize} 
Analogously, define the \emph{vertical meta-rank}, $\mrk_{M,y}:\Dgm\to \Pvec$ by replacing each instance of $x$ above with $y$.
\end{definition}

The results in this paper are stated in terms of the horizontal meta-rank, but hold analogously for the vertical meta-rank.
To simplify notation, we henceforth denote $\mrk_{M,x}$ as $\mrk_M$.
When there is no confusion, we drop the subscript from $\mrk_M$. 

\begin{figure}[t]
    \centering
    \includegraphics[width=0.3\linewidth]{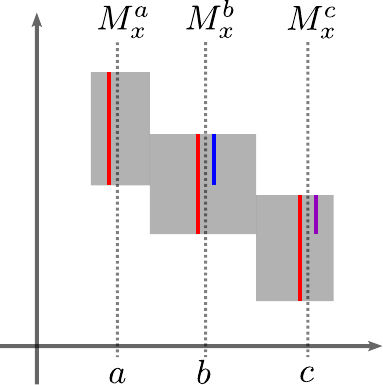}
    \caption{An illustration of $M$ and its barcode for some values of $\mrk_M$ in \autoref{example:mrk-def-ex}.}
    \label{fig:mrk-example-2}
\end{figure}

\begin{example}
\label{example:mrk-def-ex}
As illustrated in \autoref{fig:mrk-example-2}, let $I$ be the single gray interval and define the bimodule $M:=\kf^I$.
The barcodes for the 1-parameter modules $M_x^a,M_x^b$, and $M_x^c$ are shown in red next to their corresponding vertical slices.
The barcode for $\mrk_M([a,b))$ consists of the blue interval, which is the overlap of the bars in $M_x^a$ and $M_x^b$, 
$\barc(M_x^a) \cap \barc(M_x^b)$.
Similarly, $\mrk_M([b,c))$ has a barcode consisting of the purple interval, which is the overlap of the bars in $M_x^b$ and $M_x^c$.
As the bars in the barcodes for $M_x^a$ and $M_x^c$ have no overlap, $\im(\phi_x(a\leq c))=0$, therefore $\mrk_M([a,c))=0$.
\end{example}

\begin{remark}
\label{rmk:mrkxynotequal}
In general, $\mrk_x\neq \mrk_y$. 
For example, consider the right-open rectangle $R$ with the lower-left corner the origin, and the upper right corner $(1,2)$, as in \autoref{fig:mrkx-mrky-ex}.
Let $M:=\kf^R$.
As illustrated, $\mrk_{M,x}([0,1)) = [0,2)\neq [0,1)=\mrk_{M,y}([0,1))$.
\end{remark}

\begin{figure}[t]
    \centering
    \includegraphics[width=.6
    \linewidth]{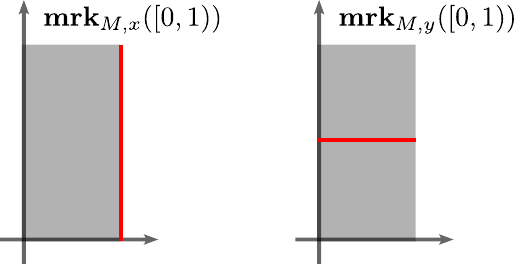}
    \caption{An illustration of $M$, depicting $\mrk_{M,x}([0,1))\neq \mrk_{M,y}([0,1))$.} 
    \label{fig:mrkx-mrky-ex}
\end{figure}

The following~\autoref{prop:direct-sums} allows us to compute the meta-rank of a bimodule via the meta-ranks of its indecomposable summands:
\begin{proposition}
\label{prop:direct-sums}
For $M,N:\R^2\to \cvec$, we have:
\[\mrk_M\oplus \mrk_N = \mrk_{M\oplus N}\]
\end{proposition}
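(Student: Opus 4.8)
The plan is to reduce the statement about meta-ranks to the corresponding statement about ordinary images of linear maps, slice by slice, and then invoke naturality of the direct sum decomposition. First I would fix an arbitrary interval $I \in \Dgm$ and unwind the three cases of \autoref{def:mrk}; in each case $\mrk_M(I)$ is the isomorphism class $[\im(\phi_x(s \leq t))]$ for an appropriate pair $s \leq t$ (with $t$ either $t$ itself, $s_i - \delta$, or $s_n$), so it suffices to show that $\im\big(\phi_x^{M \oplus N}(s \leq t)\big) \cong \im\big(\phi_x^M(s \leq t)\big) \oplus \im\big(\phi_x^N(s \leq t)\big)$ as 1-parameter persistence modules, and that a common constructibility set $S$ can be chosen for $M$, $N$, and $M \oplus N$ so that the case distinction is made consistently on all three sides.

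Next I would make precise the sense in which the morphism $\phi_x$ respects direct sums. By definition $\phi_x(s \leq t)(a) = \varphi((s,a) \leq (t,a))$, and since $(M \oplus N)$ is defined pointwise with $\varphi_{M \oplus N} = \varphi_M \oplus \varphi_N$ at every comparable pair, we get $\phi_x^{M \oplus N}(s \leq t)(a) = \phi_x^M(s \leq t)(a) \oplus \phi_x^N(s \leq t)(a)$ for every $a \in \R$, i.e. $\phi_x^{M \oplus N}(s \leq t)$ is literally the direct sum morphism $\phi_x^M(s \leq t) \oplus \phi_x^N(s \leq t) : M_x^s \oplus N_x^s \to M_x^t \oplus N_x^t$. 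Now I would use the elementary linear-algebra fact that the image of a direct sum of linear maps is the direct sum of the images, applied pointwise in $a$: $\im(f \oplus g) = \im f \oplus \im g$ as subspaces of the codomain. The point is that this identification is compatible with the internal morphisms $\varphi^s_x(a \leq b)$, because those also split as direct sums, so the submodule $\im(\phi_x^{M\oplus N}(s\leq t))$ of $M_x^t \oplus N_x^t$ equals, as a persistence submodule, $\im(\phi_x^M(s \leq t)) \oplus \im(\phi_x^N(s \leq t))$. This yields the desired isomorphism in $\Pvec$, and since the claimed equality $\mrk_M \oplus \mrk_N = \mrk_{M \oplus N}$ is an equality of functions $\Dgm \to \Pvec$ where $\oplus$ on the left is taken valuewise, checking it at each $I$ finishes the argument.

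The main obstacle I anticipate is purely bookkeeping rather than conceptual: the value $\mrk_M(I)$ on a finite interval $[s, s_i)$ with $s_i \in S$ depends on the choice of constructibility set $S$ through the endpoint $s_i - \delta$, so I need to verify that the meta-rank is well-defined independent of this choice (or at least that $M$, $N$, $M \oplus N$ admit a common $S$ — indeed if $M$ is $S_M$-constructible and $N$ is $S_N$-constructible then both, and hence $M \oplus N$, are $(S_M \cup S_N)$-constructible) and that the value does not change when $S$ is enlarged. This is presumably established (or implicitly used) already when $\mrk$ is introduced; I would cite that and then the case analysis collapses to the single slice-wise computation above. A secondary minor point is to note that $\im(\phi_x(s \leq s_i - \delta))$ is independent of the particular admissible $\delta$, which follows because $\varphi^s_x(s_i - \delta \leq s_i - \delta')$ is an isomorphism for $\delta, \delta'$ small by constructibility, so the images are carried isomorphically onto one another.
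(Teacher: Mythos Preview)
Your proposal is correct, and in fact the paper does not include a proof of this proposition at all---it is stated and the pointwise definition of $\mrk_M\oplus\mrk_N$ is given immediately afterward, with no argument in the main text or in the appendix. So you are supplying a proof the authors chose to omit, presumably as routine.

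Your argument is exactly the expected one: reduce to showing $\im(\phi_x^{M\oplus N}(s\leq t))\cong\im(\phi_x^M(s\leq t))\oplus\im(\phi_x^N(s\leq t))$ as persistence modules, which follows because the slice morphism $\phi_x^{M\oplus N}(s\leq t)$ is the pointwise direct sum $\phi_x^M(s\leq t)\oplus\phi_x^N(s\leq t)$ and $\im(f\oplus g)=\im f\oplus\im g$ for linear maps, applied at each $a\in\R$ and compatibly with the internal morphisms. Your handling of the constructibility bookkeeping---passing to a common $S=S_M\cup S_N$ so that the case split in \autoref{def:mrk} is made uniformly, and noting that the choice of admissible $\delta$ is irrelevant up to isomorphism---is also correct and is precisely the kind of detail the paper sweeps under the rug (the remark after \autoref{def:constructible} that $S$-constructible implies $S'$-constructible for $S'\supseteq S$ is all they provide). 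Nothing more is needed.
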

where $\mrk_M\oplus \mrk_N:\Dgm\to \Pvec$ is defined as:
\[(\mrk_M\oplus \mrk_N)([s,t)) := [\mrk_M([s,t))\oplus \mrk_N([s,t))].\]

For a finite $S\subseteq \R$, let $\overline{S}:=S\cup\{\infty\}$.
Define $\overline{S}_>:\R\cup \{\infty\}\to \overline{S}$ as $\overline{S}_>(t):=\min \{s\in \overline{S} \, | \, s>t\}$.
For $M\in \Pvec$, $[b,d)\in \Dgm$, let $\#[b,d)\in M$ denote the multiplicity of $[b,d)\in \barc(M)$. 
The rank invariant and the meta-rank contain equivalent information:
\begin{proposition}
\label{prop:mrk-rk-equivalence}
For $M:\R^2\to \cvec$, one can compute $\rank_M$ from $\mrk_M$ and one can compute $\mrk_M$ from $\rank_M$.
In particular, given $(s,y)\leq (t,y')\in \R^2$, 
\[\rank_M((s,y),(t,y')) = \#[b_i,d_i)\in \mrk_M([s,\overline{S}_>(t))) \,  \, s.t. \, \, b_i\leq y\leq y'<d_i.\]
That is, the rank is the number of intervals in $\barc(\mrk_M([s,\overline{S}_>(t))))$ containing $[y,y']$.  
\end{proposition}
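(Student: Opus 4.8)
The plan is to establish the displayed formula first; the two computability claims then follow essentially for free, reading it in the two directions. Fix $(s,y)\leq(t,y')\in\R^2$ and write $t':=\overline{S}_>(t)$, so that $[s,t')\in\Dgm$. By \autoref{def:mrk}, $\mrk_M([s,t'))$ is represented by $\im(\phi_x(s\leq t'-\delta))$ in the finite case (with $t'-\delta\geq\max(s,s_{i-1})$ and $t'-\delta<t'=s_i$) and by $\im(\phi_x(s\leq s_n))$ in the case $t'=\infty$. The first reduction is to check that in either case this isomorphism class is $[\im(\phi_x(s\leq t))]$: since $S$-constructibility forces $\varphi_M((t,a)\leq(\tau,a))$ to be an isomorphism for every $a\in\R$ whenever $t,\tau$ lie in the same gap $[s_{i-1},s_i)$ of $S$ (resp.\ both lie in $[s_n,\infty)$), and since $\varphi_M((s,a)\leq(\tau,a))=\varphi_M((t,a)\leq(\tau,a))\circ\varphi_M((s,a)\leq(t,a))$, the restriction of $\varphi_M((t,a)\leq(\tau,a))$ to images furnishes a bijection $\im(\phi_x(s\leq t))(a)\to\im(\phi_x(s\leq\tau))(a)$; a one-line check shows these bijections are natural in $a$. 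Hence $\im(\phi_x(s\leq t))\cong\im(\phi_x(s\leq\tau))$, the choice of $\delta$ is immaterial, and it suffices to prove
\[\rank_M((s,y),(t,y'))=\#\{[b_i,d_i)\in\barc(\im(\phi_x(s\leq t)))\ :\ b_i\leq y\leq y'<d_i\}.\]

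Next I would show $N:=\im(\phi_x(s\leq t))$ is a genuine submodule of $M^t_x$: its value at $a$ is the subspace $\im(\varphi_M((s,a)\leq(t,a)))\subseteq M^t_x(a)$, and the identity $\varphi_M((s,a)\leq(t,b))=\varphi^t_x(a\leq b)\circ\varphi_M((s,a)\leq(t,a))=\varphi_M((s,b)\leq(t,b))\circ\varphi^s_x(a\leq b)$ shows $\varphi^t_x(a\leq b)$ restricts to a map $N(a)\to N(b)$, namely $\varphi_N(a\leq b)$. The algebraic heart is then immediate: from $\varphi_M((s,y)\leq(t,y'))=\varphi^t_x(y\leq y')\circ\varphi_M((s,y)\leq(t,y))$ we get $\im(\varphi_M((s,y)\leq(t,y')))=\varphi^t_x(y\leq y')(N(y))=\im(\varphi_N(y\leq y'))$, so $\rank_M((s,y),(t,y'))=\rank(\varphi_N(y\leq y'))$. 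Finally, $N$ is interval decomposable (every $1$-parameter module is, \cite{crawley2015decomposition}), and $\varphi_{\kf^{[b,d)}}(y\leq y')$ equals $\id_\kf$ exactly when $b\leq y\leq y'<d$ and is $0$ otherwise; summing over $\barc(N)$ gives $\rank(\varphi_N(y\leq y'))=\#\{[b_i,d_i)\in\barc(N):b_i\leq y\leq y'<d_i\}$. This proves the formula, hence that $\rank_M$ is determined by $\mrk_M$.

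For the converse, read the just-proved chain of equalities backwards: for each fixed $s\leq\tau$, the rank function $(y\leq y')\mapsto\rank(\varphi_{\im(\phi_x(s\leq\tau))}(y\leq y'))=\rank_M((s,y),(\tau,y'))$ of the $1$-parameter module $\im(\phi_x(s\leq\tau))$ is a restriction of $\rank_M$. An analogous constructibility argument to the one above shows $\im(\phi_x(s\leq\tau))$ is itself $S$-constructible, so its barcode has endpoints in $\overline{S}$ and is therefore recovered from its rank function by the classical M\"obius inversion of the rank function for $1$-parameter persistence (multiplicity of $[s_i,s_j)$ as an alternating sum of rank values at consecutive elements of $S$). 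Since every element of $\Dgm$ is of the form $[s,t')$ with $\mrk_M([s,t'))=[\im(\phi_x(s\leq\tau))]$ for a suitable $\tau\in\{t',\,t'-\delta,\,s_n\}$, and elements of $\Pvec$ are determined by their barcodes, this recovers all of $\mrk_M$ from $\rank_M$.

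I expect the only real obstacle to be bookkeeping rather than mathematics: carefully verifying that the $\delta$-dependent clause and the half-infinite clause of \autoref{def:mrk} both yield the class $[\im(\phi_x(s\leq t))]$ independently of the chosen representative, that $\im(\phi_x(s\leq\tau))$ is functorial and $S$-constructible, and that the boundary cases (e.g.\ $y<s_1$, $t<s_1$, or endpoints coinciding with elements of $S$) make both sides of the formula vanish simultaneously. The central factorization identity $\im(\varphi_M((s,y)\leq(t,y')))=\im(\varphi_N(y\leq y'))$ is a one-line diagram chase.
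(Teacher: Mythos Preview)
Your proposal is correct and follows essentially the same route as the paper: factor $\varphi_M((s,y)\leq(t,y'))$ through $N:=\im(\phi_x(s\leq t))$, identify $\rank_M((s,y),(t,y'))$ with $\rank(\varphi_N(y\leq y'))$, then count bars; and for the converse, recover $\barc(\mrk_M([s,t)))$ from $\rank_M$ by the usual $1$-parameter inclusion--exclusion on the $y$-coordinates. Your argument for the key identity is in fact slightly more direct than the paper's---you observe outright that $\im(\varphi_M((s,y)\leq(t,y')))=\varphi^t_x(y\leq y')(N(y))=\im(\varphi_N(y\leq y'))$, whereas the paper reaches the same conclusion via $\rank(h)=\rank(g)-\dim(\ker f\cap\im g)$ combined with rank--nullity---and you are more explicit about why the $\delta$-clause and the $\infty$-clause of \autoref{def:mrk} both reduce to $[\im(\phi_x(s\leq t))]$; the paper simply asserts ``one can check'' this. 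The only point the paper handles that you leave implicit is that the minimal $S$ can itself be read off from $\rank_M$ (via its critical points), so that the inversion step does not presuppose knowledge of $S$.
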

The reason for needing $\overline{S}_>(t)$ for the right endpoint is that if $t\in S$, $\mrk_M([s,t))$ does not capture the information of the image of $\phi_x(s\leq t)$, only the image of $\phi_x(s\leq t-\delta)$.

Finally, we discuss the stability of the meta-rank. 
The meta-rank is stable with respect to a notion of erosion distance, based on that of Patel \cite{patel2018generalized}.
We introduce truncated barcode:

\begin{definition}
\label{def:truncated-barcode}
For $\epsilon\geq 0$, and $I=[s,t)\in \Dgm$, define $I[\epsilon:]:= [s+\epsilon,t)$. 
For $M:\R\to \cvec$ define:
$ \barc_\epsilon(M) := \{I[\epsilon:] \, | \, I\in \barc(M)\}$. 
If $I=[s,t) \in \barc(M)$ has $t-s\leq\epsilon$, then $I$ has no corresponding interval in $\barc_\epsilon(M)$.
\end{definition}

\begin{definition}
\label{def:compare}
For $M,N:\R\to \cvec$, we say $M\preceq_\epsilon N$ if there exists an injective function on barcodes $\iota:\barc_\epsilon(M)\hookrightarrow \barc(N)$ such that for all $J\in \barc_\epsilon(M)$, $J\subseteq \iota(J)$.
\end{definition}

For $\epsilon\geq 0$, $M\in \Pvec$, let $M^\epsilon$ refer to the $\epsilon$-shift of $M$ \cite{lesnick2015theory}, with $M^\epsilon(a):=M(a+\epsilon)$ and $\varphi_{M^\epsilon}(a\leq b):=\varphi_M(a+\epsilon\leq b+\epsilon)$.
For $I=[s,t)\in \Dgm$ and $a,b\in \R$, let $I^b_a:=[s+a,t+b)$, with the convention $\infty+b:=\infty$ for any $b\in \R$.
We now define the erosion distance:

\begin{definition}
\label{def:erosion-distance}
Let $M,N:\R^2\to \cvec$. Define the erosion distance as follows:
\begin{align*}
\dE(\mrk_M,\mrk_N):=\inf \{\epsilon>0 \, | \, \forall I\in \Dgm, \, &\mrk_M(I^\epsilon_{-\epsilon})^\epsilon \preceq_{2\epsilon} \mrk_N(I) \, \mathrm{and} \\
&\mrk_N(I^\epsilon_{-\epsilon})^\epsilon \preceq_{2\epsilon} \mrk_M(I)\}
\end{align*}
if the set we are infimizing over is empty, we set $\dE(\mrk_M,\mrk_N):= \infty$.
\end{definition}
\begin{proposition}
\label{prop:mrk-pseudometric}
$\dE$ as defined in \autoref{def:erosion-distance} is an extended pseudometric on the collection of meta-ranks of constructible bimodules $M:\R^2\to \cvec$.
\end{proposition}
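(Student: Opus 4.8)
The plan is to verify the three defining properties of an extended pseudometric: non-negativity together with $\dE(\mrk_M,\mrk_M)=0$, symmetry, and the triangle inequality. Symmetry is immediate from the definition, since the set being infimized over in \autoref{def:erosion-distance} is visibly symmetric in $M$ and $N$ (the two conditions $\mrk_M(I^\epsilon_{-\epsilon})^\epsilon \preceq_{2\epsilon} \mrk_N(I)$ and $\mrk_N(I^\epsilon_{-\epsilon})^\epsilon \preceq_{2\epsilon} \mrk_M(I)$ are interchanged when $M$ and $N$ are swapped). Non-negativity holds since we only infimize over $\epsilon>0$. For $\dE(\mrk_M,\mrk_M)=0$, I would show that for every $\epsilon>0$ the condition holds: we need $\mrk_M(I^\epsilon_{-\epsilon})^\epsilon \preceq_{2\epsilon} \mrk_M(I)$, i.e., an injection from $\barc_{2\epsilon}(\mrk_M(I^\epsilon_{-\epsilon})^\epsilon)$ into $\barc(\mrk_M(I))$ that expands intervals. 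The key observation here is a monotonicity property of the meta-rank: if $I \subseteq I'$ in $\Dgm$, then there is a natural surjection-factoring relationship between $\mrk_M(I)$ and $\mrk_M(I')$ coming from the fact that $\phi_x$ is functorial, so $\im(\phi_x(s'\le t'))$ factors through $\im(\phi_x(s\le t))$ whenever $[s,t)\subseteq[s',t')$; I would extract from this (together with the $\epsilon$-shift bookkeeping, using that shifting commutes appropriately with taking images) that each bar of $\mrk_M(I^\epsilon_{-\epsilon})$, after a $2\epsilon$-truncation and $\epsilon$-shift, embeds into a bar of $\mrk_M(I)$ containing it.

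For the triangle inequality, let $M, N, Q$ be constructible bimodules, and suppose $\dE(\mrk_M,\mrk_N) < \epsilon_1$ and $\dE(\mrk_N,\mrk_Q) < \epsilon_2$; I want to conclude $\dE(\mrk_M,\mrk_Q) \le \epsilon_1 + \epsilon_2$ by showing the defining condition holds at level $\epsilon_1+\epsilon_2$. Fix $I = [s,t) \in \Dgm$ and write $\epsilon = \epsilon_1+\epsilon_2$. From the first hypothesis applied to the interval $I^{\epsilon_1}_{-\epsilon_1}$-shifted appropriately, and the second applied to a further shift, I get a chain $\mrk_M((I^\epsilon_{-\epsilon}))^{\epsilon} \preceq_{?} \mrk_N(I^{\epsilon_1}_{-\epsilon_1})^{\epsilon_1} \preceq_{?} \mrk_Q(I)$ after matching up the shift and truncation parameters. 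Composing the two injective interval-expanding maps on barcodes gives an injective interval-expanding map, and the truncation/shift parameters should add up to exactly what $\preceq_{2\epsilon}$ requires — this is where I need to be careful that $\barc_{2\epsilon_1}$ followed by a further $2\epsilon_2$-truncation of the shifted module agrees with $\barc_{2\epsilon}$ of the doubly-shifted module, and that $I^{\epsilon}_{-\epsilon} = (I^{\epsilon_1}_{-\epsilon_1})^{\epsilon_2}_{-\epsilon_2}$ and $(M^{\epsilon_1})^{\epsilon_2} = M^{\epsilon}$. The analogous chain with $M$ and $Q$ swapped handles the other half of the condition. Taking infima over valid $\epsilon_1, \epsilon_2$ then yields $\dE(\mrk_M,\mrk_Q) \le \dE(\mrk_M,\mrk_N) + \dE(\mrk_N,\mrk_Q)$.

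The main obstacle I anticipate is the bookkeeping in the triangle inequality: tracking how the parametrized shifts $(\cdot)^\epsilon_{-\epsilon}$ on intervals of $\Dgm$, the module shifts $(\cdot)^\epsilon$, and the truncations $\barc_{2\epsilon}$ compose, and checking that the relation $\preceq$ behaves well under composition with shifts — specifically that $A \preceq_{2\epsilon_1} B$ and $B^{\epsilon_1} $ related to $C$ can be chained. I would isolate as a lemma the statement that $\preceq_\epsilon$ is "transitive up to shift": if $A \preceq_{\delta_1} B^{\gamma_1}$ and $B \preceq_{\delta_2} C^{\gamma_2}$ in the appropriate sense, then $A \preceq_{\delta_1+\delta_2'} C^{\gamma_1+\gamma_2}$ with the truncation parameter adding correctly; this should follow from the fact that truncating by $\delta_1$ then by $\delta_2$ equals truncating by $\delta_1+\delta_2$, and that an interval contained in a $\gamma_1$-shift of an interval that is itself contained in a $\gamma_2$-shift is contained in the $(\gamma_1+\gamma_2)$-shift. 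Since this is precisely the standard mechanism behind the erosion distance of Patel \cite{patel2018generalized} being a pseudometric, the argument is structurally routine, but the two-parameter shift $I^b_a$ notation makes the index-chasing the part most prone to error, so I would write it out explicitly.
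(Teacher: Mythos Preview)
Your proposal is correct and follows essentially the same approach as the paper: symmetry is immediate, and the triangle inequality is obtained by chaining the two $\preceq$ relations using $(I^{\epsilon_1}_{-\epsilon_1})^{\epsilon_2}_{-\epsilon_2} = I^{\epsilon_1+\epsilon_2}_{-\epsilon_1-\epsilon_2}$ together with additivity of shifts and truncations. The paper's proof is terser---it does not explicitly verify $\dE(\mrk_M,\mrk_M)=0$ at all and writes the chain in one line without isolating a transitivity lemma for $\preceq_\epsilon$---but the substance is the same, and the bookkeeping you flag as the main obstacle is exactly what the paper sweeps under ``it is clear that''.
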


We compare bimodules $M$ and $N$ using the  multiparameter interleaving distance \cite{lesnick2015theory}.
The $\epsilon$-shift and the truncation of the barcode in  \autoref{def:truncated-barcode} are necessary for stability, due to the interleaving distance being based on diagonal shifts of bimodules, whereas the meta-rank is based on horizontal maps instead of diagonal ones.
We have the following:
\begin{theorem}
\label{thm:mrk-stability}
For constructible $M,N:\R^2\to \cvec$, we have:
\[\dE(\mrk_M, \mrk_N) \leq \dI(M,N)\]
\end{theorem}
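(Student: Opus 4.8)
The plan is to unpack the definition of the interleaving distance and push the interleaving maps through the slicing construction. Suppose $\dI(M,N) = \epsilon'$; fix any $\epsilon > \epsilon'$, so that there exist morphisms $f: M \to N^{\epsilon}$ (shift by $(\epsilon,\epsilon)$) and $g: N \to M^{\epsilon}$ that compose, in both orders, to the internal structure maps of $M$ and $N$ by $(2\epsilon, 2\epsilon)$. Restricting $f$ and $g$ to a fixed vertical line $x = s$ gives morphisms of $1$-parameter modules $M^s_x \to (N^{\epsilon}_x)^s$ and $N^s_x \to (M^{\epsilon}_x)^s$, where the superscript $\epsilon$ on the $1$-parameter modules denotes the $\epsilon$-shift in the remaining coordinate. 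These restricted maps form an $\epsilon$-interleaving between $M^s_x$ and $N^{s+\epsilon}_x$ — but note that the horizontal shift of the line by $\epsilon$ is what makes this work: the interleaving is diagonal, so comparing the slice at $x=s$ of $M$ to the slice at $x = s+\epsilon$ of $N$ absorbs the horizontal component of the shift, leaving only a vertical $\epsilon$-shift.

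The key step is then to track what happens to images of the horizontal maps $\phi_x$. Fix $I = [s,t) \in \Dgm$ and consider $\mrk_M(I) = [\im \phi_x^M(s \le t')]$ for the appropriate $t'$ (either $t$, or $t - \delta$, or $s_n$). The naturality of $f$ and $g$ with respect to the horizontal structure maps of $M$ and $N$ gives a commuting diagram relating $\phi_x^M(s \le t')$, $\phi_x^N(s + \epsilon \le t' + \epsilon)$ (roughly), and the interleaving maps; a standard diagram chase (the same argument that proves rank is monotone under interleavings, applied one categorical level up, to morphisms of $1$-parameter modules rather than linear maps) shows that the image module $\im \phi_x^M(s - \epsilon \le t' + \epsilon)$, after an $\epsilon$-shift, sits inside $\im \phi_x^N(s \le t')$ up to the $2\epsilon$-truncation — i.e. $\mrk_M(I^{\epsilon}_{-\epsilon})^{\epsilon} \preceq_{2\epsilon} \mrk_N(I)$, and symmetrically with $M,N$ swapped. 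This is exactly the condition in \autoref{def:erosion-distance}, so $\dE(\mrk_M,\mrk_N) \le \epsilon$; letting $\epsilon \downarrow \epsilon'$ finishes it. One must be a little careful at the boundary between the cases of \autoref{def:mrk} (when $t$ or $t \pm \epsilon$ lands in $S$, or the interval becomes half-infinite), using constructibility to enlarge $S$ as needed so that all the relevant $\delta$-perturbations are harmless.

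The main obstacle I anticipate is the bookkeeping in the $\preceq_{2\epsilon}$ comparison: one needs an honest injection on barcodes $\barc_{2\epsilon}(\mrk_M(I^{\epsilon}_{-\epsilon})^{\epsilon}) \hookrightarrow \barc(\mrk_N(I))$ with the containment property, not merely an inequality of rank functions. The cleanest route is to first establish, at the level of $1$-parameter modules, a general lemma: if $A, B : \R \to \cvec$ are $\epsilon$-interleaved, then $\im(\text{structure map of } A \text{ over a length-}2\epsilon\text{ shrinking of } I)$ injects (barcode-wise, with containment) into $\im(\text{structure map of } B \text{ over } I)$ — essentially the statement that images of interleaved modules interleave, tracked through barcodes via the block/interval structure. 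Once that $1$-parameter lemma is in hand, the $2$-parameter result follows by applying it slicewise to the interleaving produced in the first paragraph, and the only remaining work is matching up the shift conventions ($I^{\epsilon}_{-\epsilon}$, the outer $(\,\cdot\,)^{\epsilon}$, and the $2\epsilon$-truncation) with the diagonal-versus-horizontal discrepancy, which is precisely the discrepancy these shifts were designed to absorb.
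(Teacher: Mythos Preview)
Your second paragraph is the paper's proof: take an $\epsilon$-interleaving $(f,g)$, restrict $f$ to $\im\phi_x^M(s\le t)$ to get a map $f'$ into (a shift of) $N^{t+\epsilon}_x$, and verify via the interleaving identity that $\bigl(\im\phi_x^N(s-\epsilon\le t+\epsilon)\bigr)^\epsilon[2\epsilon:]\subseteq\im(f')$; the edge cases ($t\in S$, $t+\epsilon\in S$, $t=\infty$) are handled by constructibility exactly as you say. The conversion of the resulting module surjection $f'$ and module inclusion $\iota$ into an honest barcode injection with the containment property is not automatic, though: the paper invokes the Bauer--Lesnick induced-matching theorem for this, together with a separate lemma identifying $\barc(P[2\epsilon:])$ with the truncated barcode $\barc_{2\epsilon}(P)$. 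Your phrase ``tracked through barcodes via the block/interval structure'' is pointing at this but is not a proof.

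The genuine gap is in your third paragraph. You propose to package the chase as a lemma about $\epsilon$-interleaved $1$-parameter modules $A,B:\R\to\cvec$, applied slicewise. But the slices are \emph{not} $\epsilon$-interleaved: restricting $f$ to $x=s$ gives $M^s_x\to(N^{s+\epsilon}_x)^\epsilon$, while the only available return map, $g$ restricted to $x=s+\epsilon$, gives $N^{s+\epsilon}_x\to(M^{s+2\epsilon}_x)^\epsilon$, landing in a \emph{different} slice of $M$. The composite is the diagonal structure map $M^s_x\to(M^{s+2\epsilon}_x)^{2\epsilon}$, not a shift map back into $M^s_x$, so no closed $1$-parameter interleaving exists between fixed slices and your lemma has nothing to apply to. The chase genuinely lives in $\R^2$: four distinct vertical slices (at $s-\epsilon,\,s,\,t,\,t+\epsilon$) and the horizontal maps among them all participate at once, which is why the paper does the element-level argument directly rather than reducing to a $1$-parameter statement.
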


\section{Meta-Diagram}
\label{sec:meta-diagram}

We use the M\"{o}bius inversion formula from Patel \cite{patel2018generalized} on the meta-rank function to get a \emph{meta-diagram}. 
This formula involves negative signs, so we need a notion of signed persistence modules. 
Our ideas are inspired by the work of Betthauser et al. \cite{betthauser2022graded}, where we consider breaking a function into positive and negative parts.
For omitted proofs, see \autoref{sec:meta-diagram-details}. 

 \begin{definition}
 A \emph{signed 1-parameter persistence module} is an ordered pair $(M,N)$, where $M,N:\Z\to \cvec$ are 1-parameter persistence modules. 
$M$ is the \emph{positively signed} module, and $N$ is the \emph{negatively signed} module.
 \end{definition}

\begin{definition}
View $\Pvec$ as a commutative monoid with operation $\oplus$ given by $[M]\oplus[N]:= [M\oplus N]$, and identity element $[0]$.
Define $\SPvec$ to be the Grothendieck group of $\Pvec$.
\end{definition}

Each element of $\SPvec$ is an isomorphism class of ordered pairs $[([M^+],[M^-])]$.
From the completeness of barcodes for 1-parameter persistence modules, we assume without loss of generality that each element $M^+$, $M^-$ is given by $\ast:=\oplus_{I\in \barc(\ast)}\kf^I$ and drop the internal equivalence class notation to write an element of $\SPvec$ as $[(M^+,M^-)]$.
\autoref{prop:mdgmrepresentatives} allows us to make a canonical choice of representative for each element of $\SPvec$: 
\begin{proposition}
\label{prop:mdgmrepresentatives}
Let $A\in \SPvec$. Then there is a unique representative $A=[(M^+,M^-)]$ with $\barc(M^+)\cap \barc(M^-)=\emptyset$.
\end{proposition}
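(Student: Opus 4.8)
The plan is to reduce the statement to the well-definedness of barcodes for $1$-parameter modules together with the standard description of the Grothendieck group of a free commutative monoid. First I would recall that $\Pvec$, as a commutative monoid under $\oplus$, is freely generated by the isomorphism classes of interval modules $\kf^I$ for $I \in \Int(\R)$; this is exactly the content of the Azumaya--Krull--Remak--Schmidt theorem invoked earlier, which gives that every constructible $1$-parameter module has a well-defined barcode, so that $[M]$ is uniquely the formal sum $\sum_{I \in \barc(M)} [\kf^I]$ in the free monoid on $\{[\kf^I]\}$. Consequently $\SPvec$, the Grothendieck group of $\Pvec$, is the free abelian group on the same generating set $\{[\kf^I] : I \in \Int(\R)\}$.

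Next I would use the fact that in a free abelian group on a basis $\mathcal{B}$, every element $A$ has a unique expression $A = \sum_{I} n_I [\kf^I]$ with $n_I \in \Z$ and only finitely many $n_I$ nonzero. Splitting each coefficient into its positive and negative parts, set $\barc(M^+) := \{I \text{ with multiplicity } n_I : n_I > 0\}$ and $\barc(M^-) := \{I \text{ with multiplicity } -n_I : n_I < 0\}$, and let $M^+ := \bigoplus_{I \in \barc(M^+)} \kf^I$, $M^- := \bigoplus_{I \in \barc(M^-)} \kf^I$. Then $A = [(M^+, M^-)]$ in $\SPvec$ by construction, and $\barc(M^+) \cap \barc(M^-) = \emptyset$ since no index $I$ can have both $n_I > 0$ and $n_I < 0$.

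For uniqueness, suppose $A = [(P^+, P^-)]$ is another representative with $\barc(P^+) \cap \barc(P^-) = \emptyset$. In the Grothendieck group the relation $[(M^+,M^-)] = [(P^+,P^-)]$ means $M^+ \oplus P^- \cong P^+ \oplus M^-$ as $1$-parameter modules (after absorbing the usual auxiliary summand, which is trivial here since $\Pvec$ is cancellative as a free monoid). Passing to barcodes via uniqueness of decomposition, this is the multiset equality $\barc(M^+) \sqcup \barc(P^-) = \barc(P^+) \sqcup \barc(M^-)$. For each interval $I$, comparing multiplicities and using the disjointness hypotheses $\barc(M^+)\cap\barc(M^-)=\emptyset$ and $\barc(P^+)\cap\barc(P^-)=\emptyset$ forces the multiplicity of $I$ in $\barc(M^+)$ to equal that in $\barc(P^+)$ and similarly for the negative parts; hence $\barc(M^\pm) = \barc(P^\pm)$, so $M^\pm \cong P^\pm$ and the representatives agree.

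The only genuine subtlety — the step I expect to be the main obstacle — is making precise that $\Pvec$ really is a \emph{free} commutative monoid (equivalently, that it is cancellative and that barcodes behave additively under $\oplus$), so that its Grothendieck group is the expected free abelian group and the positive/negative splitting is canonical. This rests on the Azumaya--Krull--Remak--Schmidt theorem already cited and on the fact that $\oplus$ of $1$-parameter modules concatenates barcodes; once these are in hand the argument is the elementary observation that integers decompose uniquely into positive and negative parts. I would also note explicitly that constructibility is preserved under the finite direct sums used above, so $M^+, M^-$ remain legitimate objects of $\Pvec$.
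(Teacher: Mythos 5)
Your proof is correct, and at its core it rests on the same two inputs as the paper's: the Azumaya--Krull--Remak--Schmidt theorem (giving well-defined, additive barcodes for constructible $1$-parameter modules) and multiset comparison of those barcodes under the disjointness hypothesis. The packaging differs modestly. You front-load the structural observation that $\Pvec$ is a free commutative monoid on $\{[\kf^I]\}$, hence cancellative, and that $\SPvec$ is therefore the free abelian group on the same basis; existence and uniqueness then drop out of the elementary positive/negative-part decomposition of an integer vector. The paper instead proves both halves by hand: for existence it strips the common intersection $\mathcal{J}$ from an arbitrary representative and checks the two pairs are identified in $\SPvec$; for uniqueness it keeps the auxiliary summand $V$ from the Grothendieck relation, passes to barcodes, and cancels $\barc(V)$ as multisets. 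Your explicit appeal to cancellativity (eliminating $V$ outright) is the one genuine simplification; beyond that, the arguments are the same, and your version would be the more convenient one if the free-abelian-group structure of $\SPvec$ were to be reused elsewhere. One small point worth making explicit in your write-up: the generating set should be the isomorphism classes of interval modules with $I \in \Dgm$ (half-open, constructible intervals), not all of $\Int(\R)$, so that finiteness of the support of the coefficient vector $n_I$ follows directly from constructibility.
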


As a result of \autoref{prop:mdgmrepresentatives}, when convenient,  we represent an element of $\SPvec$ uniquely by the sum of barcodes of this special representative, as in the following example:

\begin{example}
Consider $[(N^+,N^-)]\in \SPvec$ where $\barc(N^+) = \{[0,4],[1,3],[2,4]\}$ and \\$\barc(N^-)= \{[1,3],[3,4]\}$. 
By \autoref{prop:mdgmrepresentatives}, $[(N^+,N^-)]$ is uniquely represented by $[(M^+,M^-)]$ with $\barc(M^+)=\{[0,4],[2,4]\}$ and $\barc(M^-)=\{[3,4]\}$.
In practice, we will denote this element of $\SPvec$ as  $[0,4]+[2,4]-[3,4]\in \SPvec$.  
If $M,N\in \Pvec$, denote by $M+N$ the element $[(M\oplus N,0)]\in \SPvec$, and denote by $M-N$ the element $[(M,N)]\in \SPvec$.
For an illustration, see \autoref{fig:mdgm-representatives}.
\end{example}

\begin{figure}[t]
    \centering
    \includegraphics[width=.5\linewidth]{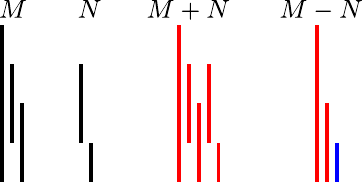}
    \caption{Illustration of the barcodes for $M,N\in \Pvec$ and $M+N,M-N\in \SPvec$.
    For $M+N$ and $M-N$, a red interval is positively signed and a blue interval is negatively signed.
    }
    \label{fig:mdgm-representatives}
\end{figure}

With this notion of signed persistence module in hand, we now use a modified version of the M\"{o}bius inversion formula from \cite{patel2018generalized} to define a meta-diagram:

\begin{definition}
Let $M:\R^2\to \cvec$ be $S$-constructible. 
Define the \emph{horizontal meta-diagram} to be the function $\mdgm_M:\Dgm \to \SPvec$ via the M\"{o}bius inversion formula:
\begin{align*}
\mdgm_{M,x}([s_i,s_j))&:= \mrk_{M,x}([s_i,s_j)) - \mrk_{M,x}([s_i,s_{j+1})) \\ & \hphantom{:= \ }+ \mrk_{M,x}([s_{i-1},s_{j+1})) - \mrk_{M,x}([s_{i-1},s_j)) \\
\mdgm_{M,x}([s_i,\infty)) &:= \mrk_{M,x}([s_i,\infty)) - \mrk_{M,x}([s_{i-1},\infty))
\end{align*}
where $s_0$ is any value $s_0<s_1$ and $s_{n+1}$ is any value $s_{n+1}>s_n$. 
For any other $[s,t)\in \Dgm$, set $\mdgm_{M,x}([s,t)):=0$.
Define the \emph{vertical meta-diagram} by replacing each instance of $x$ above with $y$.
\end{definition}
We henceforth let $\mdgm$ refer to the horizontal meta-diagram of $M$, dropping the subscript when there is no confusion. 
The following M\"{o}bius inversion formula describes the relation between the meta-rank and meta-diagram. It is the direct analogue of \cite[Theorem 4.1]{patel2018generalized}.
\begin{proposition}\label{prop:mdgmmobius}
For $[s,t)\in \Dgm$, we have:
\[\mrk([s,t)) = \sum_{\substack{I\in \Dgm\\ I\supseteq [s,t)}} \mdgm(I)\]
\end{proposition}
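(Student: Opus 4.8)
The plan is to prove this as a finite M\"{o}bius-inversion identity over the poset $\Dgm$, exactly paralleling \cite[Theorem 4.1]{patel2018generalized}, but carried out in the Grothendieck group $\SPvec$ rather than over an arbitrary abelian group. First I would observe that by the definition of $\mdgm$, the only intervals $I\in\Dgm$ on which $\mdgm(I)$ is nonzero are those of the form $[s_i,s_j)$ with $s_i,s_j\in\overline{S}$ (allowing $s_{n+1}$ and $s_0$ as chosen), so the sum on the right-hand side is finite and I may restrict attention to the finite sub-poset of $\Dgm$ with endpoints in $\{s_0<s_1<\dots<s_n<s_{n+1}\}\cup\{\infty\}$. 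The identity then splits into two cases according to whether $[s,t)$ is finite or half-infinite.

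For the finite case $[s,t)=[s_i,s_j)$, the plan is to substitute the defining formula for $\mdgm$ into $\sum_{I\supseteq[s_i,s_j)}\mdgm(I)$ and check that the sum telescopes. Concretely, the intervals $I\supseteq[s_i,s_j)$ are exactly the $[s_k,s_\ell)$ with $k\le i$ and $\ell\ge j$ (together with half-infinite $[s_k,\infty)$ for $k\le i$). Grouping the four-term expression $\mdgm([s_k,s_\ell)) = \mrk([s_k,s_\ell)) - \mrk([s_k,s_{\ell+1})) + \mrk([s_{k-1},s_{\ell+1})) - \mrk([s_{k-1},s_\ell))$ over this range, consecutive terms cancel in both the $k$-direction and the $\ell$-direction: summing over $\ell\ge j$ first collapses each row to $\mrk([s_k,s_j)) - \mrk([s_k,\infty)) + \mrk([s_{k-1},\infty)) - \mrk([s_{k-1},s_j))$ (using that $\mrk$ stabilizes at $\infty$, i.e. $\mrk([s_k,s_{n+1})) = \mrk([s_k,\infty))$ by $S$-constructibility), and then the half-infinite terms $\mdgm([s_k,\infty)) = \mrk([s_k,\infty)) - \mrk([s_{k-1},\infty))$ cancel the $\mrk([\cdot,\infty))$ contributions, leaving $\sum_{k\le i}\big(\mrk([s_k,s_j)) - \mrk([s_{k-1},s_j))\big)$, which telescopes in $k$ to $\mrk([s_i,s_j)) - \mrk([s_{k_0-1},s_j))$; the final subtracted term vanishes because for $k_0$ small enough $s_{k_0}<s_1$ and $\mrk$ is $0$ there. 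The half-infinite case $[s,t)=[s_i,\infty)$ is the same telescoping argument with only the second line of the $\mdgm$ definition, using that the intervals containing $[s_i,\infty)$ are precisely $[s_k,\infty)$ with $k\le i$.

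All the cancellations above take place in $\SPvec$, so I would be careful to note that the steps are legitimate: $\SPvec$ is an abelian group (the Grothendieck group of $(\Pvec,\oplus)$), addition and subtraction behave formally, and \autoref{prop:direct-sums} is not even needed here — only the group structure and the fact that $\mrk$ takes values in $\Pvec\subset\SPvec$. The one genuine input beyond formal bookkeeping is the stabilization of $\mrk$ at the top and its vanishing at the bottom, which is immediate from $S$-constructibility and the definition of $\mrk_{M,x}$ (the $[s,\infty)$ clause and the $[s,s_i)$ clause with $\delta$).

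The main obstacle I anticipate is purely organizational rather than mathematical: keeping the index bookkeeping straight in the two-dimensional telescoping, in particular correctly handling the boundary indices $s_0$ and $s_{n+1}$ and confirming that the ``extra'' terms they introduce cancel or vanish, and making sure the degenerate cases (e.g. $j=i+1$, or $[s,t)$ not of the form $[s_i,s_j)$, where both sides are manifestly $0$ or reduce to a single term) are covered. Since this is the direct analogue of Patel's theorem, I would if desired simply invoke the general M\"{o}bius-inversion principle over the locally finite poset $\Dgm$ with its incidence algebra, noting that the coefficients in the $\mdgm$ formula are exactly the values of the M\"{o}bius function $\mu_{\Dgm}$, and that the right-hand side is the zeta-transform of $\mdgm$; the identity $\mrk = \zeta * \mdgm$ is then equivalent to $\mdgm = \mu * \mrk$, which is the definition. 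I would present the explicit telescoping as the self-contained proof and mention the incidence-algebra viewpoint as a remark.
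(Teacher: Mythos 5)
Your proposal is correct and follows essentially the same approach as the paper: both prove the identity by substituting the defining alternating-sum formula for $\mdgm$ into $\sum_{I\supseteq[s,t)}\mdgm(I)$ and telescoping (the paper telescopes over the left endpoint first, you over the right endpoint first, which is immaterial), using the same two inputs you flag — vanishing of $\mrk$ below $s_1$ and stabilization of $\mrk$ at $s_n$ to absorb the auxiliary $s_0,s_{n+1}$ boundary terms and the $[\cdot,\infty)$ contributions.
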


\begin{proposition}
\label{prop:directsumsmdgm}
For $M,N:\R^2\to \cvec$, we have:
\[\mdgm_M \oplus \mdgm_N = \mdgm_{M\oplus N},\] 
where $\mdgm_M\oplus \mdgm_N:\Dgm\to \SPvec$ is defined by \begin{align*}
    (\mdgm_M\oplus \mdgm_N)([s,t))&:=[\mdgm_M([s,t))^+\oplus \mdgm_N([s,t))^+,\\ &\hphantom{:=\ [} \mdgm_M([s,t))^-\oplus \mdgm_N([s,t))^-].
\end{align*}
\end{proposition}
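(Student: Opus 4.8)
The statement $\mdgm_M \oplus \mdgm_N = \mdgm_{M\oplus N}$ should follow almost immediately from the additivity of the meta-rank (\autoref{prop:direct-sums}) together with the fact that the M\"{o}bius inversion formula defining $\mdgm$ is a \emph{linear} combination of meta-rank values. The plan is to fix an arbitrary interval in $\Dgm$ and expand both sides termwise.

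First I would reduce to a single common constructible set: if $M$ is $S_1$-constructible and $N$ is $S_2$-constructible, then both (and $M\oplus N$) are $S$-constructible for $S := S_1\cup S_2$, so all three meta-diagrams can be computed with the same $S = \{s_1 < \cdots < s_n\}$ in the defining formula. This ensures the four (resp.\ two) terms appearing in $\mdgm_M$, $\mdgm_N$, and $\mdgm_{M\oplus N}$ are indexed by the \emph{same} intervals $[s_i, s_j)$, $[s_i, s_{j+1})$, $[s_{i-1}, s_{j+1})$, $[s_{i-1}, s_j)$ (and analogously for the half-infinite case).

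Next, fix $[s_i, s_j) \in \Dgm$ (the case $[s,t)$ not of this form gives $0$ on both sides, and the half-infinite case $[s_i,\infty)$ is handled identically with two terms instead of four). By \autoref{prop:direct-sums}, for each of the four intervals $I$ appearing in the formula we have $\mrk_{M\oplus N}(I) \cong \mrk_M(I) \oplus \mrk_N(I)$ in $\Pvec$. Now I would work in the Grothendieck group $\SPvec$, where the four-term alternating sum defining $\mdgm_{M\oplus N}([s_i,s_j))$ becomes
\begin{align*}
\mdgm_{M\oplus N}([s_i,s_j)) &= \bigl(\mrk_M([s_i,s_j)) + \mrk_N([s_i,s_j))\bigr) - \bigl(\mrk_M([s_i,s_{j+1})) + \mrk_N([s_i,s_{j+1}))\bigr) \\ &\hphantom{{}={}} + \bigl(\mrk_M([s_{i-1},s_{j+1})) + \mrk_N([s_{i-1},s_{j+1}))\bigr) - \bigl(\mrk_M([s_{i-1},s_j)) + \mrk_N([s_{i-1},s_j))\bigr),
\end{align*}
using that the canonical map $\Pvec \to \SPvec$ sends $\oplus$ to the group operation. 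Since $\SPvec$ is abelian, I regroup the $M$-terms and $N$-terms separately to get exactly $\mdgm_M([s_i,s_j)) + \mdgm_N([s_i,s_j))$ as elements of $\SPvec$. Finally I would check that this last expression, written out via the canonical minimal representatives of \autoref{prop:mdgmrepresentatives}, agrees with the claimed formula for $(\mdgm_M \oplus \mdgm_N)([s_i,s_j))$: the positive part is $\mdgm_M([s_i,s_j))^+ \oplus \mdgm_N([s_i,s_j))^+$ and the negative part is $\mdgm_M([s_i,s_j))^- \oplus \mdgm_N([s_i,s_j))^-$, since in the Grothendieck group $[(A^+,A^-)] + [(B^+,B^-)] = [(A^+\oplus B^+, A^-\oplus B^-)]$.

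\textbf{Main obstacle.} There is no deep obstacle here; the only point requiring a little care is bookkeeping. One must confirm that passing to a larger constructible set $S' \supseteq S$ does not change the value of $\mdgm_M$ — i.e.\ that $\mdgm_M$ is well-defined independent of the chosen $S$ — so that the three meta-diagrams really can be compared over a common index set. This is a consequence of the constructibility conditions (new points $s$ inserted into $S$ lie in a region where the relevant internal morphisms are isomorphisms, so the inserted alternating-sum terms telescope to zero), but it should be invoked explicitly. Beyond that, the proof is a routine diagram-chase in the abelian group $\SPvec$.
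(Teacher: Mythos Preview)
The paper does not actually supply a proof of this proposition: it is stated in Section~\ref{sec:meta-diagram} and the reader is referred to Appendix~\ref{sec:meta-diagram-details} for omitted proofs, but no proof of \autoref{prop:directsumsmdgm} appears there either. The authors evidently regard it as an immediate consequence of \autoref{prop:direct-sums} together with the linearity of the M\"obius inversion formula in $\SPvec$.

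Your approach is exactly this natural argument, and it is correct. You also correctly isolate the one point that requires any care --- that enlarging the constructible set from $S_1$ (resp.\ $S_2$) to $S=S_1\cup S_2$ does not change the value of $\mdgm_M$ (resp.\ $\mdgm_N$) --- and you give the right reason: the extra points lie in regions where the structure maps are isomorphisms, so the new alternating-sum terms cancel. The paper leaves this well-definedness implicit throughout, so in this respect your write-up is more complete than the paper's.
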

 
\autoref{prop:directsumsmdgm} allows us to compute meta-diagrams straightforwardly if we have an indecomposable decomposition of a module.
In particular, by~\autoref{prop:rectanglemdgm}, meta-diagrams are simply computable for rectangle decomposable modules. 

\begin{proposition}
\label{prop:rectanglemdgm}
Suppose $M=\kf^R$ is an $\R^2$-indexed interval module supported on the right-open rectangle $R$, with lower-left corner $(s,t)$ and upper-right corner $(s',t')$. We have:
\[\mdgm_M([a,b)) = 
\begin{cases} 
[t,t') & \mbox{if } \, a=s \, \mbox{ and } \, b=s'; \\ 
0 & otherwise. 
\end{cases}\]
\end{proposition}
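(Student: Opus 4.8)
The plan is to compute $\mdgm_M$ directly from the definition by first determining the meta-rank $\mrk_M = \mrk_{M,x}$ of the interval module $\kf^R$ on all relevant elements of $\Dgm$, and then plugging these values into the Möbius inversion formula. First I would fix a constructibility set: since $R = [s,s')\times[t,t')$, the module $\kf^R$ is $S$-constructible for $S = \{s, s', t, t'\}$ (or any finite superset), and I would reindex $S = \{s_1 < \dots < s_n\}$ so that $s$ and $s'$ appear as consecutive-relevant grid values $s_i = s$, $s_j = s'$ along the horizontal axis. The key computation is that for the vertical slices, $M_x^a = \kf^{[t,t')}$ if $a \in [s,s')$ and $M_x^a = 0$ otherwise, and the map $\phi_x(a \le b) : M_x^a \to M_x^b$ is the identity on $\kf^{[t,t')}$ when $a,b \in [s,s')$ and is $0$ whenever $a < s'$ but $b \ge s'$ (or $a < s$). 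Hence, by \autoref{def:mrk}, $\mrk_M([p,q)) = [\kf^{[t,t')}]$ exactly when the interval $[p,q)$, after the $\delta$-adjustment at the right endpoint, lies within $[s,s')$ — i.e. when $p \ge s$ and $q \le s'$ (using that $\mrk_M([p,s')) = [\im(\phi_x(p \le s'-\delta))]$) — and $\mrk_M([p,q)) = 0$ otherwise.

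Next I would evaluate the four-term formula $\mdgm_M([s_i,s_j)) = \mrk_M([s_i,s_j)) - \mrk_M([s_i,s_{j+1})) + \mrk_M([s_{i-1},s_{j+1})) - \mrk_M([s_{i-1},s_j))$ at the candidate box $[a,b) = [s,s')$ and confirm that the first term equals $[\kf^{[t,t')}]$ while the other three vanish: $[s,s_{j+1})$ overshoots the right edge, $[s_{i-1},s_{j+1})$ overshoots on both sides, and $[s_{i-1},s_j)$ overshoots the left edge, so each of those meta-ranks is $0$. Thus $\mdgm_M([s,s')) = [\kf^{[t,t')}] - 0 + 0 - 0$, which as an element of $\SPvec$ is written $[t,t')$. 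For any other $[a,b) \in \Dgm$ I would argue that all four terms in the formula coincide (both birth values lie on the same side of $s$, or both death values lie on the same side of $s'$, giving a telescoping cancellation), so $\mdgm_M([a,b)) = 0$; the half-infinite case $[s_i,\infty)$ is handled by the two-term formula and is $0$ since $R$ is a bounded rectangle so $\mrk_M([p,\infty)) = 0$ for all $p$ (the slice map $\phi_x(p \le s_n)$ is zero as $s_n \ge s'$). Finally I would note that the formula is independent of the auxiliary choices $s_0 < s_1$, $s_{n+1} > s_n$ and of the constructibility set $S$, since enlarging $S$ only inserts points where consecutive meta-ranks agree, leaving the inversion unchanged.

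The main obstacle is bookkeeping around the right-endpoint $\delta$-convention in \autoref{def:mrk}: one must be careful that $\mrk_M([p, s'))$ records $\im(\phi_x(p \le s'-\delta)) = \kf^{[t,t')}$ (nonzero), whereas $\mrk_M([p, s_{j+1}))$ for the next grid value $s_{j+1} > s'$ records $\im(\phi_x(p \le s_{j+1}-\delta))$ with $s_{j+1} - \delta \ge s'$, hence $0$. Getting this asymmetry right is exactly what makes $b = s'$ (rather than some shifted value) the correct death coordinate, and it is where an off-by-one error would most easily creep in. Everything else is a routine case check on which of $a, b$ equal $s, s'$ versus lie strictly between or outside the relevant grid values; \autoref{prop:direct-sums} is not needed here since $\kf^R$ is already indecomposable, though one could alternatively cross-check the answer against \autoref{prop:mdgmmobius} by verifying $\sum_{I \supseteq [p,q)} \mdgm_M(I)$ recovers the meta-rank computed above.
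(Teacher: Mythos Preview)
Your proposal is correct and follows essentially the same route as the paper: compute $\mrk_M$ explicitly on all of $\Dgm$ (nonzero precisely when the interval, after the $\delta$-adjustment, sits inside $[s,s')$), then plug into the four-term M\"{o}bius formula and check cases. The paper spells out the case analysis a bit more explicitly (six cases on the position of $s_i,s_j$ relative to $s,s'$), whereas your phrasing ``all four terms coincide'' is only literally true in some of those cases---in the boundary cases (e.g.\ $s_i=s$, $s_j<s'$) two terms are $[t,t')$ and two are $0$, and they cancel in pairs rather than all being equal---but your parenthetical about birth/death values lying on the same side of $s$ or $s'$ captures exactly the right pairing, so the argument goes through.
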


\begin{corollary}\label{cor:rectdecompmdgm}
Let $M = \oplus_{R\in \barc(M)} \kf^R$ be rectangle decomposable. 
Then the interval $[t,t')$ appears in $\mdgm([s,s'))$ with multiplicity $n$ if and only if the right-open rectangle with lower-left corner $(s,t)$ and upper right corner $(s',t')$ appears in $\barc(M)$ with multiplicity $n$.
\end{corollary}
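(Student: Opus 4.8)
The plan is to derive Corollary~\ref{cor:rectdecompmdgm} as a routine consequence of Proposition~\ref{prop:rectanglemdgm} together with Proposition~\ref{prop:directsumsmdgm}, so almost no new work is needed. First I would write $M \cong \bigoplus_{R \in \barc(M)} \kf^R$, and note that by an inductive application of Proposition~\ref{prop:directsumsmdgm} the meta-diagram of a finite direct sum is the $\SPvec$-sum of the meta-diagrams of the summands: $\mdgm_M([a,b)) = \sum_{R \in \barc(M)} \mdgm_{\kf^R}([a,b))$, where the sum is taken in the Grothendieck group $\SPvec$ (equivalently, concatenating the positive barcodes and concatenating the negative barcodes before cancelling, as in Proposition~\ref{prop:mdgmrepresentatives}). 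I should be slightly careful that $\barc(M)$ may be a multiset and, for a constructible module, is finite, so the induction is legitimate and no convergence issue arises.

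Next I would fix a pair $(s,s')$ with $s < s'$ and examine $\mdgm_M([s,s'))$. By Proposition~\ref{prop:rectanglemdgm}, a summand $\kf^R$ with $R$ having lower-left corner $(u,v)$ and upper-right corner $(u',v')$ contributes the barcode $[v,v')$ to $\mdgm_{\kf^R}([s,s'))$ exactly when $u = s$ and $u' = s'$, and contributes $0$ otherwise. Hence the positive part of $\mdgm_M([s,s'))$ is the multiset $\{\, [v,v') : [u,u')\times[v,v') \in \barc(M),\ u = s,\ u' = s' \,\}$, and the negative part is empty. In particular $\mdgm_M([s,s'))$ has no negative bars, so the canonical representative of Proposition~\ref{prop:mdgmrepresentatives} is already in hand and the multiplicity of $[t,t')$ in $\barc$ of the positive part equals the number of rectangles $[s,s')\times[t,t')$ in $\barc(M)$. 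This gives the forward direction: if that rectangle appears in $\barc(M)$ with multiplicity $n$, then $[t,t')$ appears in $\mdgm_M([s,s'))$ with multiplicity $n$.

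For the converse I would simply run the same identification backwards: the only summands of $M$ that contribute to $\mdgm_M([s,s'))$ at all are those whose supporting rectangle has lower-left corner with first coordinate $s$ and upper-right corner with first coordinate $s'$, and among those, the ones contributing the specific bar $[t,t')$ are exactly the rectangles $[s,s')\times[t,t')$. Since there is no cancellation (everything is positively signed), the multiplicity count transfers in both directions, and the stated ``if and only if'' follows. I do not anticipate a genuine obstacle here; the only point requiring a line of care is the bookkeeping that the $\SPvec$-sum of purely-positive elements stays purely positive, so that ``multiplicity in $\mdgm([s,s'))$'' is unambiguous and matches the multiplicity coming out of Proposition~\ref{prop:rectanglemdgm} without any subtraction. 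If one wanted to be fully rigorous about the Grothendieck-group arithmetic, one could phrase the argument at the level of barcodes-as-multisets and invoke Proposition~\ref{prop:mdgmrepresentatives} only at the very end to confirm the representative is canonical.
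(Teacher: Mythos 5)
Your proposal is correct and is exactly the argument the paper intends (the corollary is left implicit, with the surrounding text pointing to Propositions~\ref{prop:directsumsmdgm} and~\ref{prop:rectanglemdgm}). You combine the additivity of $\mdgm$ over direct sums with the single-rectangle computation, observe that everything lands in the positive part so no Grothendieck-group cancellation occurs, and read off the multiplicity correspondence in both directions — precisely as intended.
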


\subsection{Equivalence With Rank Decomposition via Rectangles}\label{sec:mrk-rkdecompequiv}

For $M:\R^2\to \cvec$, the rank decomposition by rectangles contains the same information as the rank invariant, which by~\autoref{prop:mrk-rk-equivalence} contains the same information as the meta-rank.
We now show one can directly go from the meta-diagram to the rank decomposition:

\begin{proposition}\label{prop:mdgm-rkdecompequiv}
Let $M:\R^2\to \cvec$ be constructible. 
Define as follows:
\[\mathscr{R}:= \bigcup_{I\in \Dgm} \left( \bigcup_{[a,b)\in \mdgm_M(I)} I\times [a,b)\right),\]

\[\mathscr{S}:= \bigcup_{I\in \Dgm} \left( \bigcup_{-[a,b)\in \mdgm_M(I)} I\times [a,b)\right),\]
where all unions are the multiset union.
Then $(\mathscr{R},\mathscr{S})$ is a rank decomposition for $M$.
\end{proposition}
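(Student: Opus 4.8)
The plan is to reduce the claim to the already-established equivalences and then verify the rank identity pointwise. The key tool is \autoref{prop:mrk-rk-equivalence}, which expresses $\rank_M((s,y),(t,y'))$ as the number of bars in $\barc(\mrk_M([s,\overline{S}_>(t))))$ that contain $[y,y']$; combined with the M\"obius inversion \autoref{prop:mdgmmobius}, which writes $\mrk_M(J) = \bigoplus_{I\supseteq J} \mdgm_M(I)$ as an element of $\SPvec$. So the first step is to fix a pair $(s,y)\leq (t,y')$, set $J := [s,\overline{S}_>(t))$, and compute $\rank_{\mathscr{R}}((s,y),(t,y')) - \rank_{\mathscr{S}}((s,y),(t,y'))$ directly from the definitions of $\mathscr{R}$ and $\mathscr{S}$.

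Concretely, a right-open rectangle $I'\times[a,b)$ in $\mathscr{R}$ (resp. $\mathscr{S}$) contributes $1$ to $\rank_{\mathscr{R}}((s,y),(t,y'))$ (resp. $\rank_{\mathscr{S}}$) exactly when $(s,y)\in I'\times[a,b)$ and $(t,y')\in I'\times[a,b)$, i.e. when $\{s,t\}\subseteq I'$ and $[y,y']\subseteq[a,b)$. Since $I'$ ranges over elements of $\Dgm$ (half-open or half-infinite intervals) and the $[a,b)$ are the bars of $\mdgm_M(I')^{\pm}$, the condition $\{s,t\}\subseteq I'$ is equivalent to $I'\supseteq [s,\overline{S}_>(t)) = J$: indeed $s\le t$ forces $I'\ni s$ to have left endpoint $\le s$, and $I' \ni t$ with $I'$ right-open and $S$-constructibility forces the right endpoint of $I'$ to be at least $\overline{S}_>(t)$ (this is precisely the reason the $\overline{S}_>(t)$ correction appears in \autoref{prop:mrk-rk-equivalence}). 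Here I would spell out the endpoint bookkeeping carefully, using that every $I'$ with nonzero $\mdgm_M(I')$ has both endpoints in $\overline{S}$ (by the definition of $\mdgm_M$, which is supported on intervals $[s_i,s_j)$ and $[s_i,\infty)$).

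Therefore $\rank_{\mathscr{R}}((s,y),(t,y')) - \rank_{\mathscr{S}}((s,y),(t,y'))$ equals the signed count of bars $[a,b)$ with $[y,y']\subseteq[a,b)$, summed over all $I'\supseteq J$, counting positive bars of $\mdgm_M(I')$ with $+1$ and negative bars with $-1$. By \autoref{prop:mdgmmobius} this signed count over all $I'\supseteq J$ is exactly the number of bars of $\barc(\mrk_M(J))$ containing $[y,y']$ — using that $\mrk_M(J) = \bigoplus_{I'\supseteq J}\mdgm_M(I')$ in $\SPvec$, passing to the canonical representative of \autoref{prop:mdgmrepresentatives} (cancellation of matched positive/negative bars does not change the containment count), and that the dimension/containment count is additive over direct sums. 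This equals $\rank_M((s,y),(t,y'))$ by \autoref{prop:mrk-rk-equivalence}, giving $\rank_M = \rank_{\mathscr{R}} - \rank_{\mathscr{S}}$ as integral functions, which is the definition of a rank decomposition (\autoref{def:rankdecomp}).

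The main obstacle I anticipate is the endpoint/indexing bookkeeping: making rigorous the claim that $\{s,t\}\subseteq I'$ iff $I'\supseteq[s,\overline{S}_>(t))$ for the relevant $I'\in\Dgm$, and checking that replacing $t$ by $\overline{S}_>(t)$ is harmless because $\mdgm_M$ is supported only on intervals with endpoints in $\overline{S}$ (so no $I'$ has right endpoint strictly between $t$ and $\overline{S}_>(t)$). A secondary, more routine point is justifying the passage between the $\SPvec$-valued sum $\sum_{I'\supseteq J}\mdgm_M(I')$ and an honest signed count of bars: this is just additivity of barcodes under $\oplus$ together with \autoref{prop:mdgmrepresentatives}, but it should be stated so that the cancellation of coincident positive and negative bars is visibly compatible with the containment count used to compute ranks. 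I would also remark that one may first verify the identity for $M$ rectangle decomposable using \autoref{cor:rectdecompmdgm} and \autoref{prop:directsumsmdgm} as a sanity check, though the pointwise argument above handles the general constructible case directly.
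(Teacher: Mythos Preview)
Your proposal is correct and follows essentially the same route as the paper: fix a comparable pair, use \autoref{prop:mrk-rk-equivalence} to express $\rank_M$ as a bar-containment count in $\mrk_M([s,\overline{S}_>(t)))$, invoke the M\"obius inversion \autoref{prop:mdgmmobius} to rewrite this as a signed count over $\mdgm_M(I')$ for $I'\supseteq[s,\overline{S}_>(t))$, and identify that signed count with $\rank_{\kf_\mathscr{R}}-\rank_{\kf_\mathscr{S}}$. Your treatment of the endpoint bookkeeping (the equivalence $\{s,t\}\subseteq I'\Leftrightarrow I'\supseteq[s,\overline{S}_>(t))$ via the support of $\mdgm_M$ on $\overline{S}$-endpointed intervals) is in fact more explicit than the paper's, which leaves this step implicit; conversely, the paper cites \autoref{prop:rectanglemdgm} and \autoref{cor:rectdecompmdgm} at the identification step where you argue directly from the definition of $\mathscr{R},\mathscr{S}$.
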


\begin{proof}
It suffices to show that for all $w_1:=(x_1,y_1)\leq w_2:=(x_2,y_2)\in \R^2$, $\rank_M(w_1,w_2) = \rank_{\kf_\mathscr{R}}(w_1,w_2) - \rank_{\kf_{\mathscr{S}}}(w_1,w_2)$.
Suppose $w_1\leq w_2\in \R^2$ as above.
By \autoref{prop:mrk-rk-equivalence}, 
\[\rank_M(w_1,w_2) = \#[b_i,d_i)\in \mrk_M([x_1,x_2')) \, \, \mbox{ s.t. } \, \, b_i\leq y_1\leq y_2<d_i,\] 
where for notational simplicity, $x_2':=\overline{S}_>(x_2)$.

Now fix $[b,d)$ such that $b\leq y_1\leq y_2<d$.
By \autoref{prop:mdgmmobius}, we have: 
\begin{align*}
   & \#[b,d)\in \mrk_M([x_1,x_2')) = \#[b,d)\in \sum_{\substack{I\in \Dgm\\ I\supseteq [x_1,x_2')}} \mdgm_M(I)\\
    &= \left(\#[b,d)\in \sum_{\substack{I\in \Dgm\\ I\supseteq [x_1,x_2')}} \mdgm^+_M(I)\right) - \left(\#[b,d)\in \sum_{\substack{I\in \Dgm\\ I\supseteq [x_1,x_2')}} \mdgm^-_M(I)\right)
\end{align*}
By \autoref{prop:rectanglemdgm} and \autoref{cor:rectdecompmdgm}, the term $\#[b,d)\in \sum\limits_{\substack{I\in \Dgm\\ I\supseteq [x_1,x_2')}} \mdgm^+(I)$ is the number of times $I\times [b,d)$ appears in $\mathscr{R}$ across all $I\supseteq [x_1,x_2')$, and the term $\#[b,d)\in \sum\limits_{\substack{I\in \Dgm\\ I\supseteq [x_1,x_2')}} \mdgm^-(I)$ is the number of times $I\times [b,d)$ appears in $\mathscr{S}$ across all $I\supseteq [x_1,x_2')$.

Thus, we see that $\rank_M(w_1,w_2)$ is equal to the number of rectangles in $\mathscr{R}$ containing $w_1$ and $w_2$ minus the number of rectangles in $\mathscr{S}$ containing $w_1$ and $w_2$. 
From the definition of rectangle module and the fact that $\rank$ commutes with direct sums, the first term is $\rank(\kf_\mathscr{R})(w_1,w_2)$ and the second term is $\rank(\kf_\mathscr{S})(w_1,w_2)$, and so we get: 
\[ \rank_M(w_1,w_2) = \rank_{\kf_\mathscr{R}}(w_1,w_2) - \rank_{\kf_\mathscr{S}}(w_1,w_2)\qedhere\]
\qedhere
\end{proof}

\subsection{Stability of Meta-Diagrams}
We now show a stability result for meta-diagrams.
We need to modify the notion of erosion distance to do so, as meta-diagrams have negatively signed parts.
We proceed by adding the positive part of one meta-diagram to the negative part of the other.
This idea stems from Betthauser et al.'s work \cite{betthauser2022graded}, and was also used in the stability of rank decompositions in \cite{botnan2022signed}.

\begin{definition}
For $M,N:\R^2\to \cvec$, define $\PN(M,N):\Dgm\to \cvec$ as
\[\PN(M,N)([s,t)):= \mdgm_M^+([s,t)) + \mdgm_N^-([s,t))\]
\end{definition}

$\PN(M,N)$ is a non-negatively signed 1-parameter persistence module for all $[s,t)\in \Dgm$, allowing us to make use of the previous notion of $\preceq_\epsilon$ (\autoref{def:compare}) to define an erosion distance for meta-diagrams. 
Unlike meta-ranks which have a continuous support, a meta-diagram is only supported on $(\overline{S})^2$ for some finite $S\subset \R$.
As a result, we first modify the notion of erosion distance to fit the discrete setting.

Define maps $\overline{S}_\geq,\overline{S}_\leq:\R\cup\{\infty\}\to \overline{S}$ by $\overline{S}_\geq(x):=\min\{s\in \overline{S} \,|\, x\geq s\}$ and $\overline{S}_\leq(x):=\max\{s\in \overline{S} \,|\, x\leq s\}$, or some value less than $s_1$ if this set is empty.
We say $S$ is \emph{evenly-spaced} if there exists $c\in \R$ such that $s_{i+1}-s_i=c$ for all $1\leq i\leq n-1$. 
In the following, fix an evenly-spaced finite $S\subset \R$.

\begin{definition}
\label{def:dEmdgm}
For $S$-constructible $M,N:\R^2\to \cvec$, define the erosion distance: 
\begin{align*}
\dES(\mdgm_M,\mdgm_N):=&\inf \{\epsilon\geq 0 \, | \, \forall s\leq t\in \overline{S}, \\
\, &\PN(M,N)([\overline{S}_\leq(s-\epsilon),\overline{S}_\geq(s+\epsilon))^\epsilon \preceq_{2\epsilon} \PN(N,M)([s,t)) \, \mathrm{and} \\
&\PN(N,M)([\overline{S}_\leq(s-\epsilon),\overline{S}_\geq(s+\epsilon))^\epsilon \preceq_{2\epsilon} \PN(M,N)([s,t))\}
\end{align*}
\end{definition}
We have the following stability result for meta-diagrams, 
\begin{theorem}\label{thm:mdgmdEstability}
For $S$-constructible $M,N:\R^2\to \cvec$, with $S$ evenly-spaced, we have
\[\dES(\mdgm_M,\mdgm_N)\leq \dI(M,N).\]
\end{theorem}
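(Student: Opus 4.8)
Here the plan is to derive the bound from the meta-rank stability already in hand (\autoref{thm:mrk-stability}) together with the M\"obius relation between $\mdgm$ and $\mrk$ (\autoref{prop:mdgmmobius}), letting the $\PN$ construction carry the negative parts. Fix $\epsilon>\dI(M,N)$; we show $\dES(\mdgm_M,\mdgm_N)\le\epsilon$, which suffices since $\epsilon$ is arbitrary. By \autoref{thm:mrk-stability}, $\dE(\mrk_M,\mrk_N)\le\dI(M,N)<\epsilon$, so (the conditions defining $\dE$ being monotone in their parameter) for every $K\in\Dgm$ we have
\[\mrk_M(K^\epsilon_{-\epsilon})^\epsilon\preceq_{2\epsilon}\mrk_N(K)\qquad\text{and}\qquad\mrk_N(K^\epsilon_{-\epsilon})^\epsilon\preceq_{2\epsilon}\mrk_M(K).\]
By symmetry it is enough to establish the first of the two comparisons in \autoref{def:dEmdgm}.

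The key structural input is that the cumulative sums of $\PN(M,N)$ and of $\PN(N,M)$ differ only through the meta-rank terms, sharing a common ``error term''. For $K\in\Dgm$ put $\mathrm{cum}_{MN}(K):=\bigoplus_{I\supseteq K}\PN(M,N)(I)\in\Pvec$, which is a genuine (non-negatively signed) module since each summand is. From $\sum_{I\supseteq K}\mdgm_M(I)=\mrk_M(K)$ in $\SPvec$ (\autoref{prop:mdgmmobius}) and cancellation of barcodes in $\Pvec$ we get $\bigoplus_{I\supseteq K}\mdgm_M^+(I)\cong\mrk_M(K)\oplus\bigoplus_{I\supseteq K}\mdgm_M^-(I)$, hence
\[\mathrm{cum}_{MN}(K)\cong\mrk_M(K)\oplus C_K,\qquad\mathrm{cum}_{NM}(K)\cong\mrk_N(K)\oplus C_K,\]
where $C_K:=\bigoplus_{I\supseteq K}\bigl(\mdgm_M^-(I)\oplus\mdgm_N^-(I)\bigr)$ is \emph{the same} module on both sides. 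Moreover $I'\supseteq I$ in $\Dgm$ forces $\{I'':I''\supseteq I'\}\subseteq\{I'':I''\supseteq I\}$, so $\barc(C_{I'})\subseteq\barc(C_I)$ as multisets. Using that $\preceq_\epsilon$ is compatible with $\oplus$ and with sub-barcode inclusions, that $A^\epsilon\preceq_{2\epsilon}A$ for every $A\in\Pvec$, and the monotonicity $\barc(C_{K^\epsilon_{-\epsilon}})\subseteq\barc(C_K)$, the meta-rank comparison upgrades summand-by-summand to
\[\mathrm{cum}_{MN}(K^\epsilon_{-\epsilon})^\epsilon\cong\mrk_M(K^\epsilon_{-\epsilon})^\epsilon\oplus C_{K^\epsilon_{-\epsilon}}^\epsilon\ \preceq_{2\epsilon}\ \mrk_N(K)\oplus C_K\cong\mathrm{cum}_{NM}(K)\]
for all $K\in\Dgm$ (and symmetrically with $M,N$ exchanged).

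It remains to descend from this cumulative comparison to the pointwise comparison $\PN(M,N)(J)^\epsilon\preceq_{2\epsilon}\PN(N,M)(I)$ of \autoref{def:dEmdgm}, where $I=[s,t)$ with $s\le t\in\overline{S}$ and $J=[\overline{S}_{\leq}(s-\epsilon),\overline{S}_{\geq}(t+\epsilon))$ is the discretised $\epsilon$-erosion of $I$. Here the evenly-spaced hypothesis on $S$ enters: over an evenly-spaced grid the M\"obius function of $\Dgm$ has exactly the four-term support appearing in the definition of $\mdgm$, so for a grid interval $K$ one has
\[\PN(M,N)(K)\oplus\mathrm{cum}_{MN}(K^{\uparrow})\oplus\mathrm{cum}_{MN}(K^{\leftarrow})\ \cong\ \mathrm{cum}_{MN}(K)\oplus\mathrm{cum}_{MN}(K^{\leftarrow\uparrow}),\]
writing $[s_i,s_j)^{\uparrow}:=[s_i,s_{j+1})$ and $[s_i,s_j)^{\leftarrow}:=[s_{i-1},s_j)$ (with the two-term analogue for half-infinite intervals), and likewise for $N$. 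Since $K^{\leftarrow\uparrow}\supseteq K^{\uparrow},K^{\leftarrow}\supseteq K$, the three modules $\mathrm{cum}_{MN}(K^{\uparrow})$, $\mathrm{cum}_{MN}(K^{\leftarrow})$, $\mathrm{cum}_{MN}(K^{\leftarrow\uparrow})$ are nested sub-barcodes of $\mathrm{cum}_{MN}(K)$ and $\barc(\PN(M,N)(K))$ is the corresponding inclusion--exclusion; the plan is to feed the barcode injections of the cumulative comparison — which, being the disjoint union of the meta-rank injection and the sub-barcode inclusion on the common error term $C$, are coherent with these nestings — through this inclusion--exclusion to produce an injection $\barc_{2\epsilon}(\PN(M,N)(J)^\epsilon)\hookrightarrow\barc(\PN(N,M)(I))$ with each bar contained in its image. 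Running the same argument with $M,N$ swapped gives the second comparison of \autoref{def:dEmdgm}, so $\dES(\mdgm_M,\mdgm_N)\le\epsilon$ and the theorem follows.

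\textbf{The main obstacle} is exactly this last descent. A naive transitivity argument is too lossy: both $\PN(M,N)(J)$ and the target $\PN(N,M)(I)$ sit inside their respective cumulative sums as single M\"obius summands, so bounding $\PN(M,N)(J)$ above by cumulative data does not by itself place it inside $\PN(N,M)(I)$. Making the inclusion--exclusion / signed-matching bookkeeping go through — checking that the injections of the cumulative comparison restrict coherently to the four nested cumulative pieces and recombine into a matching landing in $\PN(N,M)(I)$ rather than in the surrounding cumulative module — is the technical heart, and it is precisely what the $\PN$ construction and the even-spacing of $S$ are there to support. (Alternatively one can skip the reduction and re-run the interleaving argument behind \autoref{thm:mrk-stability} directly on the vertical slices, passing to induced matchings of the meta-rank barcodes and tracking positive and negative M\"obius contributions through the $\PN$ pairing; the same bookkeeping is the crux there too.)
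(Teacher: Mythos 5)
Your proposal does not match the paper's argument, and as you yourself flag, it has a genuine gap: the ``descent'' from the cumulative comparison $\mathrm{cum}_{MN}(K^\epsilon_{-\epsilon})^\epsilon\preceq_{2\epsilon}\mathrm{cum}_{NM}(K)$ to the pointwise comparison $\PN(M,N)(J)^\epsilon\preceq_{2\epsilon}\PN(N,M)(I)$ is never carried out, and you acknowledge in the final paragraph that a naive transitivity argument fails and that ``making the inclusion--exclusion / signed-matching bookkeeping go through \ldots\ is the technical heart.'' As written the argument bounds the wrong thing: knowing that a summand of $\mathrm{cum}_{MN}$ is dominated by $\mathrm{cum}_{NM}$ says nothing about where it lands relative to the specific summand $\PN(N,M)(I)$. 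The inclusion--exclusion identity you write down is correct, but over $\Pvec$ (where there is no subtraction) it does not by itself let you peel off the nested cumulative pieces, and you give no mechanism for the four barcode injections to be ``coherent'' with the nestings.

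The paper's proof avoids this entire machinery. It never forms cumulative sums. It observes instead that, directly from the four-term M\"obius formula defining $\mdgm$, the multiset $\barc(\PN(M,N)([s_a,s_b)))$ is the union of exactly four meta-rank barcodes: $\barc(\mrk_M([s_a,s_b)))$, $\barc(\mrk_M([s_{a-1},s_{b+1})))$, $\barc(\mrk_N([s_a,s_{b+1})))$, $\barc(\mrk_N([s_{a-1},s_b)))$ (the first two are the positive terms of $\mdgm_M$, the last two the negative terms of $\mdgm_N$), and likewise $\barc(\PN(N,M)([s_i,s_j)))$ is a union of four meta-rank barcodes. Meta-rank stability (\autoref{thm:mrk-stability}) supplies four injective, containment-preserving barcode maps $\chi_1,\ldots,\chi_4$, one for each matching pair; the even-spacing hypothesis is used only to guarantee $s_{a-1}\leq s_{i-1}-\epsilon$ and $s_{b+1}\geq s_{j+1}+\epsilon$, so that each $\chi_k$ can be post-composed with a sub-barcode inclusion to land on the discretized grid. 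Taking the disjoint union of $\chi_1',\ldots,\chi_4'$ then produces the required injection $\barc_{2\epsilon}(\PN(M,N)([s_a,s_b))^\epsilon)\hookrightarrow\barc(\PN(N,M)([s_i,s_j)))$ in one step. This is why the $\PN$ construction pairs $\mdgm_M^+$ with $\mdgm_N^-$: it makes $\PN$ literally a union of meta-ranks, so the meta-rank comparisons assemble by disjoint union rather than needing to be extracted from cumulative data. Your route, if it could be completed, would need substantially more work than the paper's direct argument.
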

For details and a stability result when $S$ is not evenly-spaced, see \autoref{sec:meta-diagram-details}.

\section{Algorithms}
\label{sec:algorithms}

In this section, we provide algorithms for computing meta-ranks and meta-diagrams.
The input to these algorithms is a simplex-wise bifiltration: 
\begin{definition}
\label{def:bifiltration}
Let $n \in \Z$, and $[n]$ denote the poset $\{1,\ldots,n\}$ with the usual order.
Let $K$ be a simplicial complex, and $\mathrm{sub}(K)$ denote all subsets of $K$ which are themselves simplicial complexes.
A filtration is a function $F:[n]\to \mathrm{sub}(K)$ such that for $a\leq b$, $F(a)\subseteq F(b)$.
We say a filtration is simplex-wise if for all $1\leq a\leq n-1$, either $F(a+1)=F(a)$ or $F(a+1) = F(a) \cup \{\sigma\}$ for some $\sigma \in K\setminus F(a)$.
In the latter case, we denote this with $F(a)\xrightarrow{\sigma} F(a+1)$.
We say $\sigma\in \mathrm{sub}(K)$ arrives at $a$ if $\sigma\in F(a)$ and $\sigma\notin F(a-1)$.

Define $P_n:=[n]\times [n]$ equipped with the product order.
A bifiltration is a function $F:P_n\to \mathrm{sub}(K)$.
We say a bifiltration is simplex-wise if for all $(a,b)\in P_n$, for $(x,y)=(a+1,b)$ or $(a,b+1)$, if $(x,y)\in P_n$, then either $F((x,y))=F((a,b))$, or $F((a,b))\xrightarrow{\sigma} F((x,y))$ for some $\sigma\notin F((a,b))$.
\end{definition}

Applying homology to a bifiltration yields a bimodule defined on $P_n$. 
Our theoretical background in previous sections focused on the case of bimodules defined over $\R^2$.
The same ideas and major results follow similarly for a module defined over $P_n$.
We quickly highlight the differences in definitions when working with modules defined on $P_n$.
The following definitions are re-phrasings of the horizontal meta-rank and horizontal meta-diagram for modules indexed over $P_n$, but as before, the statements are directly analogous in the vertical setting.
Let $\Int([n])$ refer to all intervals of $[n]$, which consists of $\{[a,b] \, | \, a\leq b, \, a,b\in [n]\}$.
\begin{definition}\label{def:intmrk}
For $M:P_n\to \cvec$, define the meta-rank, $\mrk_{M}:\Int([n])\to \Pvec$ by
\[\mrk_M([s,t]):= [\im(\phi_x(s\leq t))]\]
\end{definition}

\begin{definition}\label{def:intmdgm}
For $M:P_n\to \cvec$, define the meta-diagram, $\mdgm_{M}:\Int([n])\to \SPvec$ as follows:
if $1<s\leq t<n$, define:
\begin{align*}
    \mdgm_{M}([s,t])&:= \mrk_{M}([s,t]) - \mrk_{M}([s,t+1]) \\
    &\ \ \ \ + \mrk_{M}([s-1,t+1]) - \mrk_{M}([s-1,t]),\\
    \mdgm_{M}([s,n])&:= \mrk_{M}([s,n]) - \mrk_{M}([s-1,n]),\\
    \mdgm_{M}([1,t])&:= \mrk_M([1,t]) - \mrk_M([1,t+1]), \ \mathrm{and}\\
    \mdgm_M([1,n])&:= \mrk_M([1,n]).
\end{align*}
\end{definition}

\subsection{Overview of the Algorithm}

Henceforth, assume $F:P_n\to \mathrm{sub}(K)$ is a simplex-wise bifiltration, and $M$ is the result of applying homology to $F$.
Our algorithm to compute the meta-rank relies on the vineyards algorithm from \cite{cohen2006vines}.
The algorithm starts with $F$ as the input.
Define $\gamma_1$ to be the path in $P$ going from $(1,1)\to(1,n)\to(n,n)$, i.e. the path along the top-left boundary of $P$.
We compute the $D=RU$ decomposition for the interval decomposition of the persistence module given by the 1-parameter filtration found by slicing $F$ over $\gamma_1$, which we denote $F_{\gamma_1}$.
This decomposition gives us all the persistence intervals and persistence pairs $(\sigma_i,\sigma_j)$ and unpaired simplices corresponding to each interval, the former corresponding to a finite interval and the latter an infinite interval.
To simplify notation, for every unpaired simplex corresponding to an infinite interval, we pair it with an implicit simplex arriving in an extended $F$ at $(n+1,n+1)$.
We store the persistence intervals in an ordered list, which we denote \texttt{intervals}.
All intervals in \texttt{intervals} restricted to $[1,n]$ constitute together the 1-parameter  persistence module $M_x^1$, which is precisely $\mrk_M([1,1])$.
We then store $\mrk_M([1,1])$ as a list, with the same ordering as \texttt{intervals}, leaving an empty placeholder whenever an interval does not intersect $[1,n]$.

We sweep $\gamma$ through $P$, over one square at a time, going down through the first column, until we reach $\gamma_2$, the path $(1,1)\to (2,1)\to(2,n)\to(n,n)$.
From there, we repeat the process column-by-column until we reach $\gamma_n$, the path $(1,1)\to(n,1)\to(n,n)$; see \autoref{fig:big-algorithm}.

\begin{figure}[t]
    \centering
    \includegraphics[width=.3\linewidth]{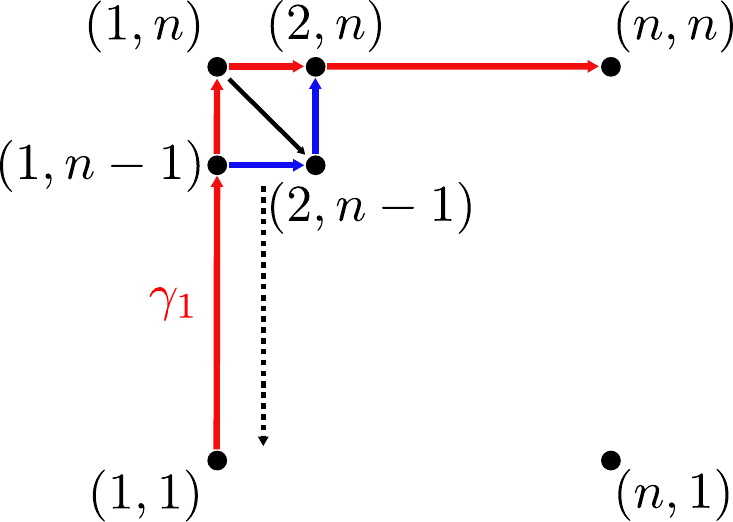}
    \hspace{6mm}
    \includegraphics[width=.26\linewidth]{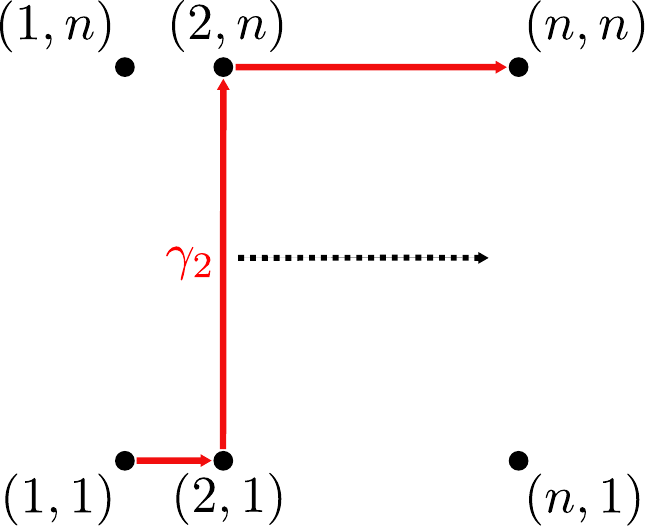}
    \hspace{6mm}
    \includegraphics[width=.26\linewidth]{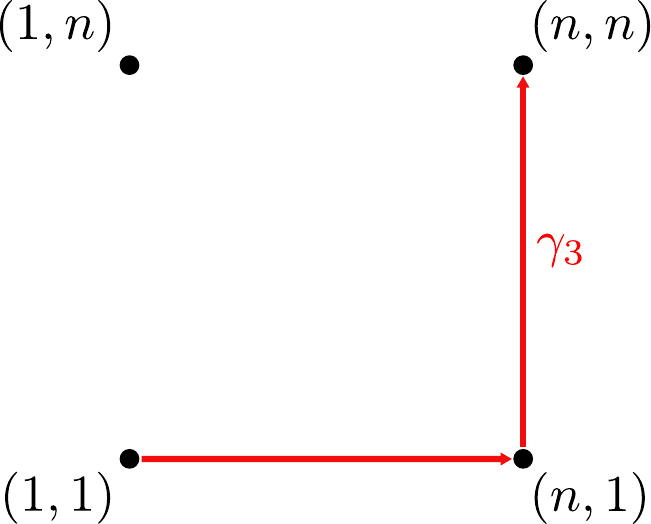}
    \caption{We start with $\gamma_1$ on the left, and then push $\gamma_1$ through the square to track along the lower-right corner of the square (in blue). 
    We repeat this process, descending down each square in the first column until we reach $\gamma_2$ (middle).
    Then we repeat this process column-by-column until we've reached $\gamma_n$ (right).} 
    \label{fig:big-algorithm}
\end{figure}

After each change of a single vertex in our intermediary paths $\gamma$ stemming from swapping the upper-left boundary of a single square to the lower-right one, the resulting filtration $F_\gamma$ either remains the same, or changes in one of the ways illustrated in \autoref{fig:square-possibilities}.

\begin{figure}[t]
    \centering
    \includegraphics{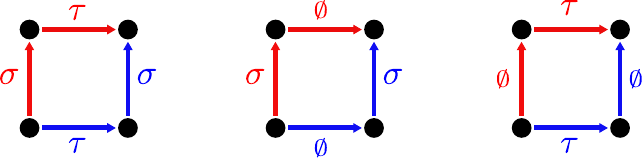}
    \caption{Three possible ways in which $F_\gamma$ can change via being pushed through a one-by-one square.
    In our algorithm, $\gamma$ always starts along the red path, then shifts to the blue path.}
    \label{fig:square-possibilities}
\end{figure}

After passing through each square, we update each interval in \texttt{intervals} in-place.
If $F_\gamma$ remains the same, then there is no change to \texttt{intervals}.
If $F_\gamma$ changes by altering the arrival time of a single simplex, then the pairings do not change, and the interval corresponding to the shifted simplex either extends by one or shrinks by one.
If a transposition occurs, see \autoref{fig:square-possibilities} (left), then we use the transposition update process from the vineyards algorithm.

If we start at $\gamma_1$, then when we reach $F_{\gamma_2}$, we can restrict each interval in \texttt{intervals} to $[2,n+1]$ and shift it back down one, and this corresponds to $\mrk_M([2,2])$, which we store using the same rules as we did with $\mrk_M([1,1])$.

Since we are storing all intervals in meta-ranks in this ordered fashion, we can take any interval in $\mrk_M([2,2])$, and see where it came from in $\mrk_M([1,1])$, which would be the interval stored at the same index in both lists.
By taking the intersection, we get the corresponding interval which we put into this location in the list $\mrk_M([1,2])$.
We repeat the process of modifying $\gamma$ one vertex at a time to get the paths $\gamma_i$ from $(1,1)\to (i,1)\to (i,n)\to (n,n)$ as above, updating \texttt{intervals} and getting $\mrk_M([i,i])$ by taking appropriate intersections and shifts.
Since every list of intervals we store maintains this ordering, we can take any interval in $\mrk_M([i,i])$, and see the corresponding interval it was previously (if any) in $\mrk_M([k,i-1])$ for all $1\leq k\leq i-1$.
Then by intersecting the interval in $\mrk_M([i,i])$ with its corresponding interval in $\mrk_M([k,i-1])$, we get a new corresponding interval in $\mrk_M([k,i])$.
We repeat this process iteratively with $i$ going from $1$ to $n$, which at the end computes all of $\mrk_M:\Int([n])\to \Pvec$.

We now describe what can happen to the intervals as we pass over a single square in which a transposition occurs, swapping $\sigma_i$ and $\sigma_{i+1}$.
From the analysis in \cite[Section 3]{cohen2006vines}, if the pairing function changes, then the intervals themselves do not change. 
If the pairing function remains the same, then two of the persistence intervals will change.
Suppose $\sigma_i$ is paired with $\tau_i$ and $\sigma_{i+1}$ is paired with $\tau_{i+1}$.
There are four possibilities, see \autoref{fig:same-pair-updates}.

\begin{figure}[t]
    \centering
    \includegraphics{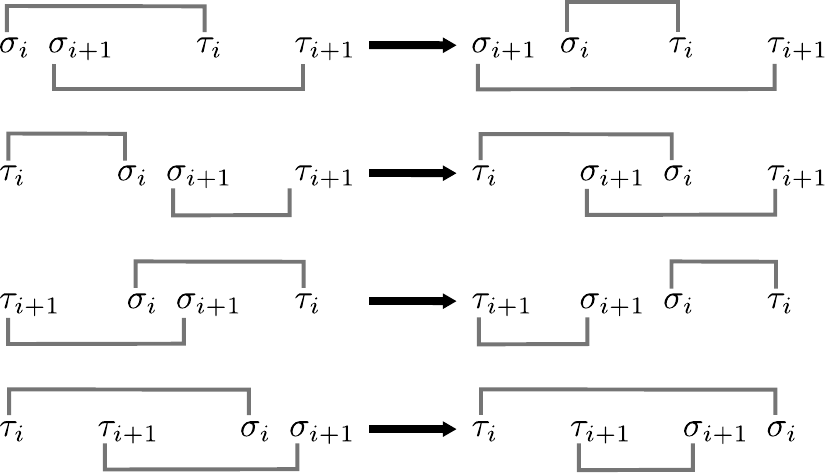}
    \caption{Four cases in which intervals change after a transposition.
    Observe that in each case, both intervals change, and this change is in exactly one coordinate.
    }
    \label{fig:same-pair-updates}
\end{figure}

\begin{algorithm}[t]
\caption{{\sc MetaRank}($F$)}
\label{alg:mrk-main-alg}
\begin{itemize}
    \item\label{item:step 1} Step 1. Compute $D=RU$ for $F_{\gamma_1}$, getting the ordered list \texttt{intervals} and the pairing for each interval.
    \item\label{item:step 2} Step 2. for each interval in \texttt{intervals}, intersect the interval with $[1,n]$, and store the result in the ordered list $\mrk([1,1])$.
    \item\label{item:step 3} Step 3. For $i:=1$ to $n-1$, do
    \begin{itemize}
        \item\label{item:step 3.1} Step 3.1. For $j:=n$ down to $2$, do
        \begin{itemize}
            \item update $D$, $R$, $U$, and \texttt{intervals} via the vineyards algorithm, as $\gamma$ sweeps through the square with upper-left corner $(i,j)$ and lower-right corner $(i+1,j-1)$.
        \end{itemize}
        \item\label{item:step 3.2} Step 3.2. For each interval in \texttt{intervals}, shift the interval down by $i-1$, and intersect the interval with $[1,n]$, storing the result in the ordered list $\mrk([i,i])$.
        \item\label{item:step 3.3} Step 3.3. For $k:=1$ to $i-1$, do
        \begin{itemize}
            \item For each interval in $\mrk([i,i])$, intersect with the corresponding interval in $\mrk([k,i-1])$.
            Store this intersection in the ordered list $\mrk([k,i])$.
        \end{itemize}
    \end{itemize}
\end{itemize}
\end{algorithm}

We describe the algorithm in \autoref{alg:mrk-main-alg}.
The output of \autoref{alg:mrk-main-alg} will be $\mrk_M$, stored as a collection of lists of the barcodes $\mrk_M([s,t])$ for all $s\leq t\in [n]$.

We now prove the correctness of \autoref{alg:mrk-main-alg}.

\begin{proposition}
\label{prop:mrkalg1}
For $i\in [n]$, $\mrk_M([i,i])$ is found by taking each interval in the barcode for $F_{\gamma_i}$, shifting it down by $i-1$, and then taking the intersection with $[1,n]$.
\end{proposition}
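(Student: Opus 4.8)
# Proof Proposal for Proposition (mrkalg1)

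The plan is to unravel what the path $\gamma_i = (1,1)\to(i,1)\to(i,n)\to(n,n)$ actually looks like and to identify the 1-parameter persistence module $M_{F_{\gamma_i}}$ with the vertical slice $M^i_x$ up to a reindexing shift. The path $\gamma_i$ traverses the vertical line $\{i\}\times[1,n]$ in $P_n$, preceded by a horizontal segment from $(1,1)$ to $(i,1)$ and followed by a horizontal segment from $(i,n)$ to $(n,n)$. When we apply homology to the filtration $F_{\gamma_i}$ restricted to the vertical portion, we get exactly the slice $M^i_x$, but reindexed: the vertex $(i,a)$ of $P_n$ sits at position $a$ along the vertical portion, yet in the list \texttt{intervals} the path's indices run consecutively starting from $1$ at $(1,1)$. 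Since the horizontal portion from $(1,1)$ to $(i,1)$ contributes $i-1$ steps before the vertical portion begins, the vertical portion occupies indices $i$ through $i+n-1$ (with $(i,1)$ at index $i$, $(i,n)$ at index $i+n-1$), and then the final horizontal portion from $(i,n)$ to $(n,n)$ contributes the remaining steps. So a persistence bar living at indices $[\alpha,\beta)$ along $\gamma_i$ corresponds, after subtracting $i-1$, to a bar at $[\alpha-(i-1),\beta-(i-1))$ in the indexing of the vertical slice $M^i_x$ over $[n]$; intersecting with $[1,n]$ discards the portions of the bar that lie on the two horizontal tails.

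First I would set up the correspondence between the vineyards updates performed in Step 3.1 and the geometry of the paths. The key observation, already stated in the algorithm overview and justified by the square-by-square analysis in \autoref{fig:square-possibilities}, is that sweeping $\gamma$ through the square with corners $(i,j)$ and $(i+1,j-1)$ transforms the path that agrees with $\gamma_i$ on the column $i$ above row $j$ and with $\gamma_{i+1}$ below row $j-1$ into the path with one more vertex on column $i+1$. Running Step 3.1 for $j$ from $n$ down to $2$ therefore transforms $\gamma_i$ into $\gamma_{i+1}$ exactly, and the vineyards algorithm maintains \texttt{intervals} as the barcode of the current $F_\gamma$ throughout (this is the correctness of the vineyards algorithm from \cite{cohen2006vines}, which I would cite rather than reprove). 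By induction on $i$, after the $i$-th iteration of the outer loop \texttt{intervals} holds the barcode of $F_{\gamma_{i+1}}$; the base case $i=1$ is handled by Step 1, which computes the barcode of $F_{\gamma_1}$ directly via the $D=RU$ decomposition.

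Next I would verify the shift-and-intersect step. Given that \texttt{intervals} holds $\barc(F_{\gamma_i})$ at the start of iteration $i$ (after Step 3.1 with the indexing above, or from Steps 1--2 when $i=1$), Step 3.2 shifts each interval down by $i-1$ and intersects with $[1,n]$. By the reindexing identification above, this produces precisely $\barc(M^i_x)$ as a submodule-supported-barcode over $[n]$, i.e. the barcode of $\mrk_M([i,i])$ by \autoref{def:intmrk} applied with $s=t=i$ (where $\phi_x(i\le i) = \id$, so $\mrk_M([i,i]) = [M^i_x]$ as a module over $[n]$). I would make explicit here that bars of $F_{\gamma_i}$ supported entirely on one of the horizontal tails are exactly those whose intersection with $[1,n]$ (after the shift) is empty, so the placeholder/empty-entry convention in the ordered list is consistent.

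The main obstacle I expect is the bookkeeping of the index shift together with the ordered-list correspondence: one must check that the shift is uniformly $i-1$ regardless of $j$ (it is, because the number of path-steps before column $i$'s vertical portion does not change as $\gamma$ sweeps down within column $i$), and that the transposition updates in the vineyards algorithm never disturb which slot of the ordered list a given simplex pair occupies relative to the slicing — or rather, that whatever reordering occurs is precisely the one the vineyards algorithm tracks, so the ``corresponding interval'' references used later (in Steps 3.3 and the subsequent propositions) remain valid. For the present proposition, though, only the shift and the barcode identification matter, so I would keep the discussion of the ordered-list slot correspondence brief and defer its full use to the proof of the later propositions that rely on Step 3.3.
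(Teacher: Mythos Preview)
Your proposal is correct and follows essentially the same approach as the paper: identify $\mrk_M([i,i])$ with the vertical slice $M_x^i$, observe that the vertical segment of $\gamma_i$ sits at indices $i$ through $i+n-1$ because of the initial horizontal run of length $i-1$, and conclude that the shift by $i-1$ followed by intersection with $[1,n]$ recovers the barcode of $M_x^i$. The paper's own proof is just a three-sentence version of this same reindexing argument. Your middle paragraph about the vineyards updates maintaining \texttt{intervals} as $\barc(F_{\gamma_i})$ is not actually needed for this proposition---the statement takes the barcode of $F_{\gamma_i}$ as given and asserts only the shift/intersect relationship---so you can safely trim that material or defer it to the proof of \autoref{thm:mrkalg}.
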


\begin{proposition}
\label{prop:mrkalg2}
Let $1<i\leq n$, and suppose we know $\mrk_M([i,i])$ and $\mrk_M([k,i-1])$ for all $1\leq k\leq i-1$, and that these lists of intervals are stored in the ordered fashion previously described.
From this information, we can compute $\mrk_M([k,i])$.
\end{proposition}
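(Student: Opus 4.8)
The goal is to show that, with the ordered-list conventions fixed in the overview, the interval stored at a given index $\ell$ in $\mrk_M([k,i])$ is exactly the intersection of the intervals stored at index $\ell$ in $\mrk_M([k,i-1])$ and in $\mrk_M([i,i])$; this is precisely what Step 3.3 of \autoref{alg:mrk-main-alg} computes. The plan is to reduce the claim to a statement about a single column-transition and then verify it against the elementary moves of the vineyards algorithm. First, since $M$ is a functor we have $\phi_x(k\leq i) = \phi_x(i-1\leq i)\circ\phi_x(k\leq i-1)$, so evaluating at each $a$ and taking images,
\[
\im\bigl(\varphi_M((k,a)\leq(i,a))\bigr) = \varphi_M((i-1,a)\leq(i,a))\Bigl(\im\bigl(\varphi_M((k,a)\leq(i-1,a))\bigr)\Bigr),
\]
i.e.\ $\mrk_M([k,i])$ is the image of the restriction of $\phi_x(i-1\leq i):M^{i-1}_x\to M^i_x$ to the submodule $\im\phi_x(k\leq i-1)\subseteq M^{i-1}_x$. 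In particular this image is simultaneously a quotient of $\mrk_M([k,i-1])$ and a submodule of $M^i_x$.

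Next I would bring in the bookkeeping from the overview together with \autoref{prop:mrkalg1}: for every column $j$, the interval at index $\ell$ of $\mrk_M([j,j])$ is the column-$j$ restriction (reindexed to $[1,n]$) of the persistence bar carried by a single persistence pair of $F_{\gamma_j}$, and the vineyards sweep tracks that pair, hence that slot, consistently as $j$ grows by one. Granting this, the statement reduces to the core claim: if a persistence feature occupies slot $\ell$, then its bar in $\mrk_M([k,i])$ equals the intersection of its bar in $\mrk_M([k,i-1])$ with its bar in $M^i_x$. The inclusion ``$\subseteq$'' is the easy direction: the image is supported in slot $\ell$ only where both $\mrk_M([k,i-1])$ (of which it is a quotient) and $M^i_x$ (of which it is a submodule) are supported there; moreover, a submodule of a $1$-parameter module restricts to a ``suffix'' on each tracked bar, and one checks (again from the persistence-pair tracking) that the image respects the corresponding direct-sum decomposition of $M^i_x$, so its bar in slot $\ell$ is a subinterval of $M^i_x$'s bar there with the same death.

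For the reverse inclusion I would argue that over the part of column $i$ where both bars are alive the feature is transported faithfully through the column-transition. Each elementary step carrying $\gamma$ from the path for column $i-1$ toward that for column $i$ is a no-op, a single-simplex arrival-time shift (which changes only the birth or the death of one bar by one and leaves all pairings fixed; see \autoref{fig:square-possibilities}), or a transposition (in which either the pairing changes and the two involved bars are unchanged, or the pairing is preserved and the two bars each change in exactly one endpoint; see \autoref{fig:same-pair-updates}). In each case a direct check shows that applying the move to an already-truncated bar inherited from the previous column's image produces exactly the intersection of the transported bar with the new slice bar. Assembling the equality over all slots $\ell$ and reapplying the ``shift down by $i-1$, intersect with $[1,n]$'' convention yields $\barc(\mrk_M([k,i]))$, which is what the algorithm stores; since this is a bounded number of interval intersections per slot, the time bound is immediate.

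The main obstacle is the case analysis of the third paragraph: one must track how a bar that has already been shortened --- because it lives in an image of $\phi_x$ rather than in a plain slice --- behaves under the transposition and insertion moves of \cite{cohen2006vines}, and confirm in each case of \autoref{fig:square-possibilities} and \autoref{fig:same-pair-updates} that ``transport then intersect'' agrees with ``intersect then transport.'' Everything else is functoriality and indexing bookkeeping.
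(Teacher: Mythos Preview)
Your reduction via functoriality and the easy inclusion $\phi_x(i-1\leq i)(I_k^{i-1})\subseteq I_k^{i-1}\cap I_i^i$ match the paper. The gap is in the reverse inclusion. You propose a local ``transport then intersect $=$ intersect then transport'' check at each elementary move of the sweep, but this is not what actually makes the argument go through, and taken literally it is false: nothing about an individual move of \autoref{fig:square-possibilities} or \autoref{fig:same-pair-updates} prevents a bar from shrinking at one step and re-expanding at a later one. If that happened, the final bar in $M^i_x$ could properly contain the image of the already-truncated bar, and your intersection formula would overcount.

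What the paper actually uses is a \emph{global monotonicity} property that comes from the specific order of the sweep (top of the column down to the bottom). Concretely: if a transposition decreases a death coordinate, the simplex responsible for that death has moved earlier and, by the downward sweep order, cannot be moved later again within this column; and if a transposition increases a birth coordinate, the simplex responsible has moved past its neighbor and will not be touched by any further transposition in this column. Hence within one column sweep a tracked interval can only shrink, never re-expand, which is exactly what rules out the obstruction above. Your write-up never invokes the sweep direction, and the phrase ``a direct check shows'' hides precisely this point. The side remarks that a submodule ``restricts to a suffix on each tracked bar'' and that the image ``respects the corresponding direct-sum decomposition of $M^i_x$'' are also unjustified as stated; the paper avoids them entirely by arguing only at the level of interval endpoints and the monotone update.
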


\begin{theorem}
\label{thm:mrkalg}
\autoref{alg:mrk-main-alg} correctly computes the meta-rank for the bimodule $M$ induced by homology of the input bifiltration $F$, and runs in time $O(n^3)$.
As a result, the number of rectangles in the rank decomposition for $M$ is also $O(n^3)$.
\end{theorem}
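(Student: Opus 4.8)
The plan is to establish the three assertions in turn: that \autoref{alg:mrk-main-alg} returns $\mrk_M$, that it does so in $O(n^3)$ time, and that this forces the minimal rank decomposition of $M$ to contain $O(n^3)$ rectangles. The backbone of the correctness argument is the two preparatory results \autoref{prop:mrkalg1} and \autoref{prop:mrkalg2}, so the remaining work is to justify the bookkeeping around them. First I would establish the loop invariant that throughout Step~3.1 the list \texttt{intervals}, together with its pairing data, faithfully represents $\barc(F_\gamma)$ for the current intermediate path $\gamma$. The key observation is that pushing $\gamma$ across one unit square changes the sliced filtration $F_\gamma$ in exactly one of the three ways in \autoref{fig:square-possibilities}: it is unchanged, the arrival time of a single simplex moves by one, or two consecutive simplices transpose. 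In the first case nothing is done; in the second the persistence pairing is unaffected and exactly one interval grows or shrinks by one, updated in place (cf.\ \autoref{fig:same-pair-updates}); in the third we apply the transposition update of the vineyards algorithm, whose correctness is proved in \cite{cohen2006vines}. Because \texttt{intervals} is kept in a fixed order keyed to the persistence pairs, every interval in the barcode at $\gamma_i$ can be traced to the entry at the same index in the barcode at an earlier path $\gamma_k$, which is exactly the provenance used in Step~3.3. Granting this invariant, \autoref{prop:mrkalg1} gives that Step~3.2 computes the diagonal value $\mrk_M([i,i])$ at each stage, and \autoref{prop:mrkalg2} gives that Step~3.3 recovers $\mrk_M([k,i])$ for $k<i$ from $\mrk_M([i,i])$ and the stored $\mrk_M([k,i-1])$. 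A routine induction on the outer loop index then shows that at termination every value $\mrk_M([s,t])$ with $s\le t\in[n]$ has been produced, i.e.\ all of $\mrk_M$.

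For the running time, I would count the cost per stage. Since the bifiltration has $O(n)$ simplices, each sliced filtration involves $O(n)$ simplices, so Step~1 costs $O(n^3)$ for the $D=RU$ decomposition and Step~2 costs $O(n)$; moreover every stored meta-rank list, having the same length as \texttt{intervals}, has $O(n)$ entries. Each pass of the outer loop then runs Step~3.1, which pushes $\gamma$ across $n-1$ unit squares with at most one vineyards transposition update per square at cost $O(n)$, for $O(n^2)$; Step~3.2, which scans the $O(n)$ current intervals, for $O(n)$; and Step~3.3, whose inner loop runs $O(n)$ times and each time intersects $O(n)$ pairs of intervals, for $O(n^2)$. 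Hence each outer iteration costs $O(n^2)$ and the loop costs $O(n^3)$, for an overall running time of $O(n^3)$; this matches the output size, as $|\Int([n])| = O(n^2)$ and each $\mrk_M([s,t])$ has $O(n)$ bars.

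For the rank-decomposition bound, I would run the computed $\mrk_M$ through the M\"{o}bius-inversion formula of \autoref{def:intmdgm} to obtain $\mdgm_M$. Each of the $O(n^2)$ values $\mdgm_M([s,t])$, being a signed combination of at most four meta-rank barcodes with $O(n)$ bars, has $O(n)$ signed bars. By \autoref{prop:mdgm-rkdecompequiv} the pair $(\mathscr{R},\mathscr{S})$ read off from $\mdgm_M$ is a rank decomposition of $M$ with $|\mathscr{R}|+|\mathscr{S}| = O(n^2)\cdot O(n) = O(n^3)$ rectangles, and the unique minimal rank decomposition by rectangles \cite{botnan2022signed}, obtained by deleting the rectangles common to $\mathscr{R}$ and $\mathscr{S}$, has at most this many, hence $O(n^3)$.

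I expect the main obstacle to be the correctness bookkeeping of the first step: verifying that the fixed ordering of \texttt{intervals} is genuinely preserved by the transposition and arrival-time updates, so that the index matching driving Step~3.3 is legitimate, and that the boundary cases of the M\"{o}bius formula (intervals $[1,t]$, $[s,n]$, and $[1,n]$) are absorbed correctly. These are exactly what \autoref{prop:mrkalg1} and \autoref{prop:mrkalg2} package, so once those are in hand the residual work for the theorem is the induction and operation count sketched above.
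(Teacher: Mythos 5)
Your proposal is correct and follows essentially the same route as the paper's proof: correctness is delegated to \autoref{prop:mrkalg1} and \autoref{prop:mrkalg2} together with the vineyards invariant, the running time is the same step-by-step count yielding $O(n^3)$ dominated by Step~3, and the rectangle bound comes from applying the M\"obius formula of \autoref{def:intmdgm} to the $O(n^3)$-sized meta-rank and invoking \autoref{prop:mdgm-rkdecompequiv}. The only minor differences are presentational — you make the loop invariant on \texttt{intervals} explicit and spell out the $O(n^2)\cdot O(n)$ count for $\mdgm_M$ rather than just asserting the $O(n^3)$ bound — but the underlying argument is the same.
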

\begin{proof}
By \autoref{prop:mrkalg1}, we can compute $\mrk_M([1,1])$, and further $\mrk_M([i,i])$ for all $i\in [n]$.
Then we can use \autoref{prop:mrkalg2} 
iteratively to fill in $\mrk_M([k,i])$ for all $1\leq k<i\leq n$, and we are done. 

For the runtime analysis, first observe that the initial $D=RU$ computation in Step 1 takes $O(n^3)$ time, and \texttt{intervals} can be
computed from the decomposition in linear time.
The loop in Step 2 also takes linear time, as the size of \texttt{intervals} is $O(n)$ which is fixed throughout.
Step 3 consists of a for loop with $O(n)$ iterations.
Step 3.1 consists of a for loop with $O(n)$ iterations, and each loop inside performs an update over a square using the vineyards approach.
A single update takes $O(n)$ time in the worst case, so Step 3.1 takes $O(n^2)$ time.
Step 3.2 runs in linear time for the same reason as Step 2.
Step 3.3 consists of a for loop with $O(n)$ iterations, with each iteration taking $O(n)$ operations as the size of each $\mrk_M([k,i])$ is the same as \texttt{intervals}.
Hence, Step 3.3 has total runtime $O(n^2)$.
Thus, each loop in Step 3 consists of substeps that run in $O(n^2)$ time, $O(n)$ time, and $O(n^2)$ time respectively, incurring a total cost of $O(n^3)$ over $O(n)$ iterations.

To summarize, we have a step with $O(n^3)$ cost, followed by a step with $O(n)$ cost, followed by a step with $O(n^3)$ cost, so the algorithm runs in $O(n^3)$ time.

By Definition \ref{def:intmdgm}, we can compute $\mdgm_M$ from $\mrk_M$ in $O(n^3)$ time, implying the number of non-zero intervals in $\mdgm_M$ is $O(n^3)$.
By Proposition \ref{prop:mdgm-rkdecompequiv}, each non-zero interval in $\mdgm_M$ corresponds uniquely to a single rectangle in the rank decomposition of $M$, and so the number of such rectangles is likewise $O(n^3)$.
\end{proof}

\section{Discussion}
\label{sec:discussion} 

We conclude with some open questions.
First, we would like to extend our approach to the $d$-parameter setting. 
We expect that a proper extension would satisfy relationships with the rank invariant and rank decompositions similar to \autoref{prop:mrk-rk-equivalence} and \autoref{prop:mdgm-rkdecompequiv}. 
Such an extension would also lead to a ``recursive'' formulation of the persistence diagram of diagrams illustrated in~\autoref{fig:diagram-of-diagrams}.
Next, \autoref{thm:mrkalg} implies that the number of rectangles needed in a rank decomposition for a bimodule is bounded above by $O(n^3)$.
It is not known whether this bound is tight. 
Lastly, there have been multiple recent works that use algorithmic ideas from $1$-parameter persistence to compute invariants in the multiparameter setting \cite{DKM22,hickok2022computing,morozov2021output}. 
We wish to explore in what ways these approaches can create new algorithms or improve upon existing ones for computing the invariants of multi-parameter persistence modules.

\bibliography{meta-refs}

\appendix

\section{Detailed Proofs for Meta-Rank}
\label{sec:meta-rank-details}

\begin{proof}[Proof of \autoref{prop:mrk-rk-equivalence}]
We start by showing that 
\begin{equation}\label{eq:prop12eq}
    \rank_M((s,y),(t,y')) = \#[b_i,d_i)\in \mrk_M([s,\overline{S}_>(t))) \,  \, s.t. \, \, b_i\leq y\leq y'<d_i.
\end{equation}
From the commutativity conditions on persistence modules, we have: 
\[\varphi((s,y)\leq (t,y')) = \varphi((t,y)\leq (t,y'))\circ \varphi((s,y)\leq (t,y)),\] 
and observe that $\varphi((s,y)\leq (t,y)) = \phi_x(s\leq t)\vert_{M((s,y))}$.
From \autoref{def:mrk}, one can check that $\mrk_M([s,\overline{S}_>(t))) = [\im(\phi_x(s\leq t))]$.
For simplified notations, let $h:=\varphi((x,y)\leq (t,y'))$, $f:=\varphi((t,y)\leq (t,y'))$,  $g:=\varphi((s,y)\leq (t,y))$, and $N:=\im(\phi_x(s\leq t))$. 
We have a commutative diagram: 

\[\begin{tikzpicture}
\draw (1,0) -- (1,5);
\draw (6,0) -- (6,5);

\node at (1,-.5) {$s$};
\node at (6,-.5) {$t$};
\node at (1,5.2) {$M_x^s$};
\node at (6,5.2) {$\im(\phi_x(s\leq t))$};

\node at (4.2,2) {$\circlearrowleft$};

\draw[-to] (1,1) -- (5.9,1);
\draw[-to] (1,1) -- (5.9,3.94);
\draw[-to] (6,1) -- (6,3.9);

\node at (1,1) {\textbullet};
\node at (0.5,0.8) {$(s,y)$};
\node at (6,1) {\textbullet};
\node at (6.5,0.8) {$(t,y)$};
\node at (6,4) {\textbullet};
\node at (6.5,4.3) {$(t,y')$};

\node at (3.5, .8) {$g$};
\node at (3.4, 2.8) {$h$};
\node at (6.2, 2.5) {$f$};

\end{tikzpicture}\]

We know from linear algebra that $\rank(h) = \rank(g)-\dim(\ker \,f\cap \im(g))$.
As noted, $N(y) = \im(g)$, so $\rank(g) = \dim(N(y))$.
It is immediate that $\dim(N(y))$ is equal to the number of intervals in $\barc(N)$ which contains $y$.
Furthermore, by the commutativity of internal morphisms of $M$, we have that $f\vert_{\im(g)}$ is exactly the internal morphism $\varphi_N(y\leq y')$.
From this and the rank-nullity theorem, we have:
\[\dim(N(y)) = \dim(\im(\varphi_N(y\leq y'))) + \dim(\ker \varphi_N(y\leq y’)).\]
As $\rank(g) = \dim(N(y))$ and $\dim(\ker \, \varphi_N(y\leq y') = \dim(\ker \, f\vert_{\im(g)}) = \dim(\ker \, f\cap \im(g))$, we find $\rank(h) = \dim(\im(\varphi_N(y\leq y')))$.
$\rank(h)$ is precisely $\rank_N(y\leq y')$, which is well-known to be the number of bars in $\barc(N)$ containing $[y,y']$.
As a result, we can compute $\rank_M$ from $\mrk_M$.

Now we show the other claim, that we can compute $\mrk_M$ from $\rank_M$.
By the definition of constructible bimodule, any critical point of the rank function is of the form $((s_a,s_b),(s_c,s_d))$ for some $s_a,s_b,s_c,s_d\in S$.
Hence, from the rank function of $M$, we can determine the minimal $S$ on which $M$ is $S$-constructible.
Assume that $S$ in the remainder is the minimal $S$ on which $M$ is $S$-constructible.

Fix some $[s,t)\in \Dgm$, and fix an interval $[y,y')\in \Dgm$.
We show that from $\rank_M$, we can determine the multiplicity of the interval $[y,y')$ in $\barc(\mrk_M([s,t)))$, denoted as $\#[y,y')$.
If $s<s_1$, then by \autoref{def:dgm} we have $\mrk_M([s,t)) = 0$.
Thus, assume $s\geq s_1$ and define $S_<,S_{\leq}:\R_{\geq s_1}\cup\{\infty\}\to S$ by $S_<(t):= \max\{s\in S \, | \, s<t\}$ and $S_\leq(t):= \max\{s\in S \, | \, s\leq t\}$. 

As a consequence of $M$ being $S$-constructible, all intervals in $\barc(\mrk_M([s,t)))$ are of the form $[s_i,s_j)$ or $[s_i,\infty)$ for some $s_i,s_j\in S$. 
If $[y,y') = [s_i,s_j)$, then by the well-known inclusion-exclusion formula in 1-parameter persistence and the formula in 
\autoref{eq:prop12eq}, we can compute:
\begin{align*}
    \#[s_i,s_j) &= \rank_M((S_\leq(s),s_i),(S_<(t),s_{j-1})) -
    \rank_M((S_\leq(s),s_i),(S_<(t),s_j))\\
    &\hphantom{=}+ \rank_M((S_\leq(s),s_{i-1}),(S_<(t),s_j)) - \rank_M((S_\leq(s),s_{i-1}),(S_<(t),s_{j-1})),
\end{align*} 
where $s_{n+1}$ is any value $s_{n+1}> s_n$, and $s_0$ is any value $s_0<s_1$.
If $[y,y') = [s_i,\infty)$, then analogously we can compute:
\[ \#[s_i,\infty) = \rank_M((S_\leq(s),s_i),(S_<(t),s_n)) - \rank_M((S_\leq(s),s_{i-1}),(S_<(t),s_n)).\]

Therefore, for any $[s,t)\in \Dgm$, and $[y,y')\in \Dgm$, we can compute the multiplicity of $[y,y')\in \mrk_M([s,t))$ from $\rank_M$, and so we can compute all of $\mrk_M$ from $\rank_M$.\qedhere 
\end{proof}

\begin{proof}[Proof of Proposition \ref{prop:mrk-pseudometric}]
Symmetry is clear from the definition of $\dE$.
It remains to check the triangle inequality.
Suppose $M,N,L:\R^2\to \cvec$ are such that $\forall I\in \Dgm$, $\mrk_{M}(I^{\epsilon_1}_{-\epsilon_1})^{\epsilon_1}\preceq_{2\epsilon_1} \mrk_{N}(I)$ and $\mrk_{N}(I^{\epsilon_1}_{-\epsilon_1})^{\epsilon_1}\preceq_{2\epsilon_1} \mrk_{M}(I)$. 
Also, suppose $\forall I\in \Dgm$, 
$\mrk_{N}(I^{\epsilon_2}_{-\epsilon_2})^{\epsilon_2}\preceq_{2\epsilon_2} \mrk_{L}(I)$ and $\mrk_{L}(I^{\epsilon_2}_{-\epsilon_2})^{\epsilon_2}\preceq_{2\epsilon_2} \mrk_{N}(I)$.

Fix any $I\in \Dgm$.
It is clear that $(I^{\epsilon_1}_{-\epsilon_1})^{\epsilon_2}_{-\epsilon_2} = I^{\epsilon_1+\epsilon_2}_{-\epsilon_1-\epsilon_2}$, and so we have:
\[\mrk_{M}(I^{\epsilon_1+\epsilon_2}_{-\epsilon_1-\epsilon_2})^{\epsilon_1+\epsilon_2}\preceq_{2(\epsilon_1+\epsilon_2)} \preceq \mrk_{N}(I^{\epsilon_2}_{-\epsilon_2})^{\epsilon_2} \preceq_{2\epsilon_2} \mrk_L(I)\]
and similarly with the roles of $M$ and $L$ reversed.
Hence, $\dE(\mrk_M,\mrk_L)\leq \epsilon_1+\epsilon_2$, as desired.
\end{proof}

The following Lemma is useful in the Proof of \autoref{thm:mrk-stability}:
\begin{lemma}\label{lemma:truncated-barcode}
Let $M:\R\to \cvec$ be a persistence module, with barcode $\barc(M)$, and let $\epsilon > 0$. Define $M[\epsilon:]:\R\to \cvec$ as follows: for $a\leq b\in \R$, 
\[M[\epsilon:](a):=\{x\in M(a) \, | \, \exists x'\in M(a-\epsilon) \, s.t.\ \varphi_M(a-\epsilon\leq a)(x')\} \]
\[M[\epsilon:](a\leq b):= M(a\leq b)\vert_{M[\epsilon:](a\leq b)}\]
Then $M[\epsilon:]:\R\to \cvec$ is a well-defined persistence module, and $\barc_\epsilon(M) = \barc(M[\epsilon:])$.
\end{lemma}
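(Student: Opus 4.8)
The plan is to prove the lemma by reduction to interval modules. First I would record the (mild) reading of the statement: the displayed formulas make $M[\epsilon:]$ the pointwise image submodule of $M$, namely $M[\epsilon:](a)=\im\big(\varphi_M(a-\epsilon\le a)\big)\subseteq M(a)$, with structure maps obtained by restricting those of $M$. To see this is a well-defined persistence module (a submodule of $M$), fix $a\le b$; functoriality of $M$ gives
\[\varphi_M(a\le b)\circ\varphi_M(a-\epsilon\le a)=\varphi_M(a-\epsilon\le b)=\varphi_M(b-\epsilon\le b)\circ\varphi_M(a-\epsilon\le b-\epsilon),\]
so $\varphi_M(a\le b)$ sends $M[\epsilon:](a)=\im\varphi_M(a-\epsilon\le a)$ into $\im\varphi_M(b-\epsilon\le b)=M[\epsilon:](b)$. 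Hence the restricted maps are well defined, functoriality of $M[\epsilon:]$ is inherited from $M$, and pointwise finite-dimensionality holds since $M[\epsilon:](a)\subseteq M(a)$; thus $\barc(M[\epsilon:])$ is well defined.

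For the barcode identity I would use two elementary facts. First, $(-)[\epsilon:]$ commutes with finite direct sums, because the image of a finite direct sum of linear maps is the direct sum of the individual images. Second, an isomorphism $f\colon M\to N$ restricts to an isomorphism $M[\epsilon:]\to N[\epsilon:]$: naturality $f(a)\circ\varphi_M(a-\epsilon\le a)=\varphi_N(a-\epsilon\le a)\circ f(a-\epsilon)$ together with surjectivity of $f(a-\epsilon)$ forces $f(a)\big(M[\epsilon:](a)\big)=N[\epsilon:](a)$. Since $M$ is constructible, $M\cong\bigoplus_{I\in\barc(M)}\kf^I$ with $\barc(M)$ finite, so these two facts give $M[\epsilon:]\cong\bigoplus_{I\in\barc(M)}(\kf^I)[\epsilon:]$.

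It then remains to compute $(\kf^I)[\epsilon:]$ for a single interval. For $I=[s,t)$, the map $\varphi_{\kf^I}(a-\epsilon\le a)$ equals $\id_\kf$ exactly when $[a-\epsilon,a]\subseteq I$, i.e.\ when $a\in[s+\epsilon,t)$, and is $0$ otherwise; the same computation with a half-infinite interval $[s,\infty)$ yields $[s+\epsilon,\infty)$. Restricting the (identity) structure maps of $\kf^I$ to these one-dimensional fibres shows $(\kf^I)[\epsilon:]\cong\kf^{I[\epsilon:]}$, which is the zero module precisely when the length of $I$ is at most $\epsilon$, so that $I[\epsilon:]=[s+\epsilon,t)$ is empty. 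Summing over the finite multiset $\barc(M)$ gives $M[\epsilon:]\cong\bigoplus\kf^{I[\epsilon:]}$ taken over all $I\in\barc(M)$ of length strictly greater than $\epsilon$, which says exactly $\barc(M[\epsilon:])=\barc_\epsilon(M)$ in the sense of \autoref{def:truncated-barcode}.

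I do not expect a genuine obstacle: the content is the reduction to interval modules via the direct-sum and isomorphism compatibility of $(-)[\epsilon:]$. The points deserving care are the verification that $M[\epsilon:]$ really is a submodule (the functoriality chain above) and the endpoint bookkeeping for half-open and half-infinite intervals — in particular checking that the borderline case where $I$ has length exactly $\epsilon$ produces the empty interval $[t,t)$ and hence contributes no bar, consistent with the convention in \autoref{def:truncated-barcode}.
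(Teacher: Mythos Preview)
Your proof is correct and follows the same underlying idea as the paper---reduce to the interval decomposition---but the execution differs in a way worth noting. The paper fixes a \emph{persistence basis} $\{e^t_I\}$ coming from $M\cong\bigoplus_{I}\kf^I$ and tracks, element by element, which basis vectors survive the truncation: $e^t_I\neq 0$ in $M[\epsilon:](t)$ iff both $e^t_I$ and $e^{t-\epsilon}_I$ are nonzero in $M$. Well-definedness and the barcode identity are then read off from these basis computations. Your route is more structural: you isolate the two categorical facts that $(-)[\epsilon:]$ commutes with finite direct sums and respects isomorphisms, and then compute only on a single interval module. This buys you a cleaner, more reusable argument (the direct-sum compatibility is a general principle), and the functoriality chain you wrote for well-definedness is tidier than the paper's basis-level check. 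The paper's version, on the other hand, makes the bijection between bars of $M$ and bars of $M[\epsilon:]$ completely explicit at the level of elements, which may be preferable if one later needs to chase specific classes through the truncation.
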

\begin{proof}[Proof of Lemma \ref{lemma:truncated-barcode}]
Let $M = \oplus_{I\in \mathcal{I}} \kf^I$, and $\{e^t_I\}_{I\in \mathcal{I}}^{t\in \R}$ be such that $\{e^t_I\}_{I\in \mathcal{I}}$ is a basis for $M(t)$ for all $t$.
Further, require $e^t_i\neq 0 \iff t\in I$, and $\varphi_M(s\leq t)(e^s_I) = e^t_I$.
The intuition is that each element $e^t_I\in M(t)$ is either 0 or a basis for the summand $\kf^I(t)$ of $M(t)$. 
We call such a set $\{e^t_I\}_{I\in \mathcal{I}}^{t\in \R}$ a \emph{persistence basis}. 
From the definitions, $e^t_I\neq 0\in M[\epsilon:](t)$ if and only if $e^t_I\neq 0\in M(t)$ and $e^{t-\epsilon}_I\neq 0\in M(t-\epsilon)$. 
Thus, if $\{e^t_{I_1},\ldots, e^t_{I_n}\}$ is a basis for $M(t)$, then a subset of these will be a basis for $M[\epsilon:](t)$. 

For $a\leq b\in \R$, to see that $\varphi_M(a\leq b)\vert_{M[\epsilon:](a)}$ maps $M[\epsilon:](a)$ into $M[\epsilon:](b)$, we can consider the mapping on basis elements. 
If $e^a_I\neq 0\in M[\epsilon:](a)$, then $e^{a-\epsilon}_I \neq 0 \in M(a-\epsilon)$.
It follows that
\[\varphi_M(a\leq b)(e^a_I) = \, \varphi_M(b-\epsilon\leq b)(\varphi_M(a-\epsilon\leq b-\epsilon)(e^{a-\epsilon}I)) \]
and so $\varphi_M(a\leq b)\vert_{M[\epsilon:](a)}(M[\epsilon:](a)\subseteq M[\epsilon:](b)$, and so $M[\epsilon:]$ is a well-defined persistence module.

Now we show that $\barc_\epsilon(M) = \barc(M[\epsilon:])$. 
Suppose $I=[s,t)\in \barc_\epsilon(M)$. 
Then $I$ corresponds uniquely to an interval $[s-\epsilon,t)\in \barc(M)$. 
Suppose $M=\oplus_{I\in \mathcal{I}} k^I$.
This interval in $\barc(M)$ corresponds uniquely to a specific sequence, namely for a fixed $I\in \mathcal{I}$, a sequence of nonzero elements  $\{e^a_I\}_{a\in [s-\epsilon,t)}$, with $e^a_I\in M(a)=0$ for $a\notin [s-\epsilon,t)$.
It is straightforward to check that if $\{e^t_I\}_{i\in \mathcal{I}}^{t\in \R}$ is a persistence basis for $M$, then $\{e'^t_I\}_{I\in \mathcal{I}}^{t\in \R}$ is a persistence vector basis for $M[\epsilon:]$, where $e'^t_I= e^t_I\in M[\epsilon:](t) \iff e^t_I\neq 0\in M(t)$ and $e^{t-\epsilon}_I\neq 0\in M(t-\epsilon)$.
Otherwise, $e'^t_I:=0$.
This means $e'^a_I \neq 0\in M(a)$ if and only if $e^a_I\neq 0\in M(a)$ and $e_I^{a-\epsilon}\neq 0\in M(a-\epsilon) \iff a\in I$. 
Thus, this sequence $\{e^a_I\}_{a\in [s,t)}$ corresponds uniquely to an interval $[s,t)\in \barc(M[\epsilon:])$. 
So we have every interval $[s,t)\in \barc_\epsilon(M)$ corresponds uniquely to an interval in $\barc(M[\epsilon:])$, so $\barc_\epsilon(M)\subseteq \barc(M[\epsilon:])$. 

To see the reverse containment, if $I=[s,t)$ is an interval in $\barc(M[\epsilon:])$, then we can reverse the previous argument to see that this corresponds uniquely to a sequence of nonzero elements $\{e^a_I\}_{a\in [s-\epsilon,t)}$ in $M$. 
This corresponds uniquely to an interval $[s-\epsilon,t)$ in $\barc(M)$, which corresponds to an interval $[s,t)\in \barc_\epsilon(M)$. 
Hence, $\barc(M[\epsilon:])\subseteq \barc_\epsilon(M)$, and so $\barc(M[\epsilon:])=\barc_\epsilon(M)$, as desired.
\end{proof}

\begin{proof}[Proof of Theorem \ref{thm:mrk-stability}]
Suppose $\epsilon\geq 0$ and $f:M\to N^{\beps}$ and $g:N\to M^{\beps}$ are an interleaving pair with $\beps=(\epsilon,\epsilon)$. 
Fix $S$ so that $M$ and $N$ are both $S$-constructible.
Let $I=[s,t)\in \Dgm$. 
Assume initially that $t\notin S$ and $t+\epsilon\notin S$, these cases will be dealt with at the end.

By the definition of constructibility, we can replace $[s,\infty)$ with $[s,c)$ for some $c\geq s_n$ (recall $s_n$ is the maximal element in $S$), 
so we will show the result under the assumption $[s,t)\in \Dgm$, with $t<\infty,\, t\notin S,$ and $t+\epsilon\notin S$.

Under our assumption, $\mrk_M(I)=[\im(\phi_x^M(s\leq t))],$ and $\mrk_N(I^\epsilon_{-\epsilon}) = [\im(\phi_x^N(s-\epsilon\leq t+\epsilon))]$.
Denote by $f'$ the restriction of $f$ to $\im(\phi_x^M(s\leq t))$.
Note that $f'$ maps into $N^{t+\epsilon}_x$.
We claim that $\im(\phi_x^N(s-\epsilon\leq t+\epsilon))[2\epsilon:]\subseteq \im(f')$.

To see this, let $a\in \R$, and let $x\in \im(\phi_x^N(s-\epsilon\leq t+\epsilon))^\epsilon[2\epsilon:](a)$.
By definition, this means there exists $x'\in \im(\phi_x^N(s-\epsilon\leq t+\epsilon))(a-\epsilon)$ such that $\varphi_N((t+\epsilon,a-\epsilon)\leq (t+\epsilon,a+\epsilon))(x') = x$.
Further, there is an $x''\in N_x^{s-\epsilon}(a-\epsilon)$ such that $\varphi_N((s-\epsilon,a-\epsilon)\leq (t+\epsilon,a-\epsilon))(x'') = x'$.
Set $y:= \varphi_M((s,a)\leq (t,a))(g(x''))$.
From this definition, it is clear that $y\in \im(\phi_x^M(s\leq t))$.
By the interleaving condition between $f$ and $g$, we have:
\begin{align*}
    f'(y) &= f'(\varphi_M((s,a)\leq (t,a))(g(x''))\\
    &= f'(g(\varphi_N((s-\epsilon,a-\epsilon)\leq (t-\epsilon,a-\epsilon))(x'')\\
    &= \varphi_N((t-\epsilon,a-\epsilon)\leq (t+\epsilon,a+\epsilon))(\varphi_N((s-\epsilon,a-\epsilon)\leq(t-\epsilon,a-\epsilon))(x''))\\
    &= \varphi_N((t+\epsilon,a-\epsilon)\leq (t+\epsilon,a+\epsilon))(\varphi_N((s-\epsilon,a-\epsilon)\leq (t+\epsilon,a-\epsilon))(x''))\\
    &= \varphi_N((t+\epsilon,a-\epsilon)\leq (t+\epsilon,a+\epsilon))(x') = x
\end{align*}
As a result, we have a surjective map $f':\im(\phi_x^M(s\leq t))\to \im(f')$, and an injective inclusion of persistence modules $\iota:\im(\phi_x^N(s-\epsilon\leq t+\epsilon))[2\epsilon:]\hookrightarrow \im(f')$.
By \cite{bauer2015induced} these maps induce injective maps on barcodes $\chi_{f'}:\barc(\im(f'))\hookrightarrow \barc(\mrk_M([s,t)))$ and $\chi_{\iota'}:\barc(\mrk_N([s-\epsilon,t+\epsilon)))^\epsilon[2\epsilon:])\hookrightarrow \barc(\im(f'))$.
By \autoref{lemma:truncated-barcode}, we can view $\chi_\iota$ as a map with domain $\barc_{2\epsilon}(\mrk_N([s-\epsilon,t+\epsilon))^\epsilon)$.

Define $\chi:=\chi_{f'}\circ \chi_\iota:\barc_{2\epsilon}(\mrk_N([s-\epsilon,t+\epsilon))^\epsilon)\to \mrk_M([s,t))$.
This is injective as it is a composition of injections.
For all $J\in \barc_{2\epsilon}(\mrk_N([s-\epsilon,t+\epsilon))^\epsilon)$, we have $\chi(J)=\chi_{f'}(\chi_\iota(J))\subseteq \chi_\iota(J)\subseteq J$. Thus, $\mrk_N(I^\epsilon_{-\epsilon})\preceq_{2\epsilon} \mrk_M(I)$.
The argument is symmetric when swapping $M$ and $N$, so we are done with this case.

If $t\in S$, then we can replace $t$ in all the above arguments with $t-\delta$ for some $\delta$ small enough such that $t-\delta+\epsilon\notin S$, and the above arguments follow to show $\mrk_N(I^\epsilon_{-\epsilon})\preceq_{2\epsilon} \mrk_M(I)$.

Lastly, if $t+\epsilon\in S$, then $\im(\phi_x^N(s-\epsilon\leq t+\epsilon)) = \mrk_N([s-\epsilon,t+\epsilon'))$ for all $\epsilon'=\epsilon+\delta$, $\delta>0$ sufficiently small.
Thus, the above arguments give us $\mrk_N(I^{\epsilon'}_{-\epsilon'})\preceq_{2\epsilon'} \mrk_M(I)$ for all such $\epsilon'$, and when taking the infimum in \autoref{def:erosion-distance}, we get $\dE(\mrk_M,\mrk_N)\leq \epsilon$, as desired.
\end{proof}

\section{Detailed Proofs for Meta-Diagrams}
\label{sec:meta-diagram-details}

\begin{proof}[Proof of \autoref{prop:mdgmrepresentatives}]
First, we establish the existence of such a pair $(M^+,M^-)$. 
Suppose $A\in \SPvec$ has a representative $(M^+_1,M^-_1)$, with $\mathcal{J}:=\barc(M^+_1)\cap\barc(M^-_1)\neq \emptyset$. 
Define $M_2^+:=\oplus_{I\in \barc(M^+_1)\setminus \mathcal{J}} \kf^I$, $M^-_2:=\oplus_{I\in \barc(M^-_1)\setminus \mathcal{J}} \kf^I$, and $V:=\oplus_{I\in \mathcal{J}} \kf^I$.
Consider $M^+_1\oplus M^-_2\oplus V$ and $M^+_2\oplus M^-_1\oplus V$. 
By construction, both of these have barcode $\barc(M^+_1)\cup \barc(M^-_1)$, where $\cup$ is the multiset union.
Hence, these two modules are isomorphic. 
As a result, in $\SPvec$, we have $[(M^+_1,M^-_1)] = [(M^+_2,M^-_2)]$, and by construction $(M^+_2,M^-_2)$ is a representative with $\barc(M^+_2)\cap \barc(M^-_2)=\emptyset$.

Now we establish the uniqueness of the pair $(M^+,M^-)$. Suppose that $[(M^+_1,M^-_1)]=[(M^+_2,M^-_2)]$, $\barc(M^+_1)\cap\barc(M^-_1)=\emptyset$, and $\barc(M^+_2)\cap \barc(M^-_2)=\emptyset$. 
It is a simple algebraic fact that for two 1-parameter persistence modules $M$ and $N$, $\barc(M\oplus N) = \barc(M)\cup \barc(N)$, where $\cup $ is the multiset union.
By definition of $[(M^+_1,M^-_1)]=[(M^+_2,M^-_2)]$, there must exist a 1-parameter persistence module $V$ such that $M^+_1\oplus M^-_2\oplus V\cong M^+_2\oplus M^-_1\oplus V$.
This implies that $\barc(M^+_1)\cup \barc(M^-_2) = \barc(M^+_2)\cup \barc(M^-_1)$.
By our assumptions on intersections, this implies that $\barc(M^+_1) = \barc(M^+_2)$ and $\barc(M^-_1)=\barc(M^-_2)$, which means $(M^+_1,M^-_1)=(M^+_2,M^-_2)$.
Therefore, this is the unique representative satisfying our intersection criterion.
\end{proof}

\begin{proof}[Proof of \autoref{prop:mdgmmobius}]
Suppose $s=s_i<t=s_j$.
Then we have:
\begin{align*}
    \sum_{\substack{I\in \Dgm\\ I\supseteq [s,t)}} \mdgm(I) &= \sum_{k=j}^n\sum_{h=1}^i \mdgm([s_h,s_k)) + \sum_{h=1}^i \mdgm([s_h,\infty))\\
    &= \sum_{k=j}^n\sum_{h=1}^i \big(\mrk([s_h,s_k)) - \mrk([s_h,s_{k+1})) \\
    &\hphantom{\sum_{k=j}^n\sum_{h=1}^i \big(\ }   + \mrk([s_{h-1},s_{k+1})) - \mrk([s_{h-1},s_k))\big)\\
    &\hphantom{= \ } + \sum_{h=1}^i \big(\mrk([s_h,\infty)) - \mrk([s_{h-1},\infty))\big)\\
    &= \sum_{k=j}^n \big( \mrk([s_i,s_k)) - \mrk([s_i,s_{k+1}))\big) + \mrk([s_i,\infty))\\
    &= \mrk([s_i,s_j))
\end{align*}
Now suppose $s=s_i<t=\infty$.
We have:
\begin{align*}
    \sum_{\substack{I\in \Dgm\\ I\supseteq [s,t)}} \mdgm([s_i,\infty)) &= \sum_{h=1}^i \mdgm([s_h,\infty))\\
    &= \sum_{h=1}^i\big(\mrk([s_h,\infty))-\mrk([s_{h-1},\infty))\big)\\
    &= \mrk([s_i,\infty))
\end{align*}

\end{proof}

\begin{proof}[Proof of \autoref{prop:rectanglemdgm}]
First, note that $M$ is constructible, over some set $S=\{s_1<\ldots<s_4\}$ of size no more (but potentially less than) three, with $S$ consisting of $s,s',t$, and $t’$. It is straightforward to compute the following:
\[\mrk([a,b)) = 
\begin{cases}
[t,t') & \mbox{if }\, s\leq a\leq b< s’;\\ 
0 & \mbox{otherwise},  
\end{cases}\]
as $\im(a\leq b)$ is either the image of $[t,t')$ under the identity, or trivial.

Assume without loss of generality that $s=s_a$ and $s'=s_b$.
If $a,b\notin S\times S$, then immediately $\mdgm([a,b))=0$ by definition.
To compute the remainder of the meta-diagram, for each pair $s_i<s_j$, we need to compute the four meta-ranks $\mrk([s_i,s_j))$, $\mrk([s_i,s_{j+1}))$, $\mrk([s_{i-1},s_{j+1}))$, and $\mrk([s_{i-1},s_j))$.
We now break into cases based on where $s_i,s_j$ are, the domains and codomains of the image maps $\phi$ in the meta-rank definition:
\begin{itemize}
    \item Case 1: $s_i< s$. All four meta-ranks are trivial since the domains $M_{s_i}$, $M_{s_{i-1}}$ are trivial modules. Hence, $\mdgm([s_i,s_j)) = 0$.
    \item Case 2: $s_j>s'$. All four meta-ranks are trivial since the codomains $M_{s_j}$, $M_{s_{j+1}}$ are trivial modules, and $\mdgm([s_i,s_j)) = 0$.
    \item Case 3: $s_i=s, \, s<s_j<s' $. We have
    $\mrk([s_i,s_j)) = [t,t')$, $\mrk([s_i,s_{j+1})) = [t,t')$, $\mrk([s_{i-1},s_{j+1}) = 0$ and $\mrk([s_{i-1},s_j)) = 0$, so $\mdgm([s_i,s_j)) = 0$.
    \item Case 4: $s<s_i < s', s_j = s'$. We have $\mrk([s_i,s_j)) = [t,t')$, $\mrk([s_i,s_{j+1})) = 0$, $\mrk([s_{i-1},s_{j+1})) = 0$ and $\mrk([s_{i-1},s_j)) = [t,t')$, so $\mdgm([a,b]) = 0$.
    \item Case 5: $s<s_i<s_j<s'$. All four meta-ranks are $[t,t')$, so $\mdgm([s_i,s_j)) = 0$.
    \item Case 6: $s_i=s, \, s_j=s'$. We have
    $\mrk([s_i,s_j)) = [t,t')$, \newline $\mrk([s_i,s_{j+1})) = 0$, $\mrk([s_{i-1},s_{j+1})) = 0$ and $\mrk([s_i,s_{j+1})) = 0$, so $\mdgm([s_i,s_j)) = [t,t')$.
\end{itemize}
This exhausts the cases for positions of $s_i$ and $s_j$ relative to $s$ and $s'$, and so we are done.
\end{proof}

\autoref{prop:mdgmdEpseudometric} follows analogously to \autoref{prop:mrk-pseudometric}:
\begin{proposition}
\label{prop:mdgmdEpseudometric}
$\dES$ as defined in \autoref{def:dEmdgm} is an extended pseudometric on the collection of meta-diagrams of $S$-constructible modules $M:\R^2\to \cvec$. 
\end{proposition}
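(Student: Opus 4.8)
The plan is to mirror the proof of \autoref{prop:mrk-pseudometric} essentially verbatim, replacing the continuous erosion distance on meta-ranks by the discretized version $\dES$ on meta-diagrams, and tracking carefully how the two asymmetric conditions in \autoref{def:dEmdgm} compose. As in the meta-rank case, we must verify three things: that $\dES$ is non-negative and symmetric, that $\dES(\mdgm_M,\mdgm_M)=0$, and that the triangle inequality holds; extendedness (possibly taking value $\infty$) is built into the definition since the infimum of an empty set is $\infty$ by convention.

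Non-negativity is immediate since we infimize over $\epsilon\ge 0$. Symmetry follows directly from the defining conditions: swapping $M$ and $N$ interchanges the roles of $\PN(M,N)$ and $\PN(N,M)$ and of the two inequalities, so the set being infimized is unchanged. For $\dES(\mdgm_M,\mdgm_M)=0$, I would check that for every $\epsilon>0$ the two conditions hold trivially: with $M=N$, $\PN(M,M)([s,t))=\mdgm_M^+([s,t))+\mdgm_M^-([s,t))$ on both sides, and since $\overline{S}_\leq(s-\epsilon)\le s$ and $\overline{S}_\geq(s+\epsilon)\ge s$ (using that $S$ is evenly-spaced so these rounding maps are well-behaved), the identity inclusion of barcodes, together with \autoref{lemma:truncated-barcode} to handle the $[2\epsilon:]$ truncation and the $\epsilon$-shift, witnesses $\preceq_{2\epsilon}$. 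Hence the infimum is $0$.

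The main work, as in \autoref{prop:mrk-pseudometric}, is the triangle inequality. Suppose $\dES(\mdgm_M,\mdgm_N)<\epsilon_1$ and $\dES(\mdgm_N,\mdgm_L)<\epsilon_2$, so that the corresponding $\preceq$-conditions hold at $\epsilon_1$ and at $\epsilon_2$ respectively. Fix $s\le t\in\overline{S}$. The key algebraic point is that the discrete shift operators compose correctly: applying $[\overline{S}_\leq(\,\cdot\,-\epsilon_1),\overline{S}_\geq(\,\cdot\,+\epsilon_1))$ and then the analogous operator at $\epsilon_2$ yields an interval contained in $[\overline{S}_\leq(s-(\epsilon_1+\epsilon_2)),\overline{S}_\geq(s+(\epsilon_1+\epsilon_2)))$ — here I would use that $S$ is evenly-spaced, so $\overline{S}_\leq$ and $\overline{S}_\geq$ are monotone and $\overline{S}_\leq(\overline{S}_\leq(x-\epsilon_2)-\epsilon_1)\ge \overline{S}_\leq(x-\epsilon_1-\epsilon_2)$, and symmetrically for $\overline{S}_\geq$. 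Combined with the fact that $\preceq_{2\epsilon_1}$ composed with $\preceq_{2\epsilon_2}$ gives $\preceq_{2(\epsilon_1+\epsilon_2)}$ (composition of injective barcode maps, each enlarging intervals), and that shifts $(\cdot)^{\epsilon_1}$ then $(\cdot)^{\epsilon_2}$ compose to $(\cdot)^{\epsilon_1+\epsilon_2}$, we obtain
\[
\PN(M,L)\big([\overline{S}_\leq(s-\epsilon),\overline{S}_\geq(s+\epsilon))\big)^{\epsilon}\ \preceq_{2\epsilon}\ \PN(L,M)([s,t))
\]
with $\epsilon=\epsilon_1+\epsilon_2$, and symmetrically. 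Taking infima gives $\dES(\mdgm_M,\mdgm_L)\le \epsilon_1+\epsilon_2$.

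The step I expect to be the main obstacle is the bookkeeping around $\PN$: unlike meta-ranks, $\PN(M,L)$ is not simply obtained by composing $\PN(M,N)$ with $\PN(N,L)$, since $\PN$ mixes the positive part of one diagram with the negative part of the other. One must check that the ``triangle'' of comparisons still closes up — concretely, that an injection $\barc_{2\epsilon_1}(\PN(M,N)(\cdots)^{\epsilon_1})\hookrightarrow \barc(\PN(N,M)(\cdots))$ composed appropriately with one for the pair $(N,L)$ lands in $\barc(\PN(L,M))$. This is exactly the role the analogous trick plays in the stability proof for rank decompositions in \cite{botnan2022signed}, and I would invoke that same reasoning: the point is that $\PN(M,N)^+ = \mdgm_M^+$ and $\PN(M,N)^- = \mdgm_N^-$, so the $N$-dependent pieces cancel telescopically across the two comparisons, leaving a valid comparison between $M$- and $L$-data. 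Once this cancellation is made explicit, the remaining estimates are routine and follow the template of \autoref{prop:mrk-pseudometric}.
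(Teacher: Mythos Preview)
The paper gives no detailed proof of this proposition; it simply asserts that it ``follows analogously to \autoref{prop:mrk-pseudometric}.'' Your plan to mirror that argument is therefore exactly what the paper intends, and the parts about non-negativity, symmetry, vanishing on the diagonal, and composition of the discrete shift operators $\overline{S}_\leq,\overline{S}_\geq$ are all fine and match the template.

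You are right, however, to flag the $\PN$ bookkeeping as the main obstacle, and this is precisely where your sketch (and the paper's one-line claim) is incomplete. In the meta-rank case the chain $\mrk_M\preceq\mrk_N\preceq\mrk_L$ involves only two modules per comparison, and composition is immediate. Here the $(M,N)$-comparison is between $\mdgm_M^+\oplus\mdgm_N^-$ and $\mdgm_N^+\oplus\mdgm_M^-$, and the $(N,L)$-comparison between $\mdgm_N^+\oplus\mdgm_L^-$ and $\mdgm_L^+\oplus\mdgm_N^-$; there is no obvious way to compose these to get a comparison between $\mdgm_M^+\oplus\mdgm_L^-$ and $\mdgm_L^+\oplus\mdgm_M^-$, because the injections of barcodes witnessing $\preceq_{2\epsilon}$ need not respect the direct-sum split. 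Your phrase ``the $N$-dependent pieces cancel telescopically'' is not justified as stated, and the notation $\PN(M,N)^+,\PN(M,N)^-$ is confused: $\PN(M,N)$ is by definition an unsigned module, so it has no positive/negative parts. One can try to repair this by adding $\mdgm_N^+\oplus\mdgm_N^-$ to both sides of each comparison so that all three modules' data appear, but then one must argue that the common summand can be cancelled from a $\preceq_{2\epsilon}$ relation, which is itself not automatic. In short, you have correctly located a genuine gap that the paper's ``follows analogously'' hides; your resolution needs to be made rigorous.
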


\begin{proof}
[Proof of \autoref{thm:mdgmdEstability}]
To show this, we show $\dES(\mdgm_M,\mdgm_N)\leq \dE(\mrk_M,\mrk_N)$ and then invoke \autoref{thm:mrk-stability}.
Let $\epsilon\geq 0$ and suppose that for all $[s,t)\in \Dgm$, we have $\mrk_M([s-\epsilon,t+\epsilon))^\epsilon\preceq_{2\epsilon} \mrk_N([s,t))$, and $\mrk_N([s-\epsilon,t+\epsilon))^\epsilon\preceq_{2\epsilon} \mrk_M([s,t))$.
Fix $[s_i,s_j)\in \Dgm$.
By our assumption, we have the following four injective maps:
\begin{align*}
    \chi_1:\barc(\mrk_M([s_i-\epsilon,s_j+\epsilon)))^\epsilon[2\epsilon:]&\to \barc(\mrk_N([s_i,s_j)))\\
    \chi_2:\barc(\mrk_M([s_{i-1}-\epsilon,s_{j+1}+\epsilon)))^\epsilon[2\epsilon:] &\to \barc(\mrk_N([s_{i-1},s_{j+1})))\\
    \chi_3:\barc(\mrk_N([s_{i-1}-\epsilon,s_j+\epsilon)))^\epsilon[2\epsilon:]&\to \barc(\mrk_M([s_{i-1},s_j)))\\
    \chi_4:\barc(\mrk_N([s_i-\epsilon,s_{j+1}+\epsilon)))^\epsilon[2\epsilon:]&\to \barc(\mrk_M([s_i,s_{j+1})))
\end{align*}
Let $s_{a}:=\overline{S}_\leq(s_i-\epsilon)$ and $s_b:=\overline{S}_\geq(s_j+\epsilon)$.
Suppose $c:=s_{i+1}-s_i$ (which by assumption is constant for any $1\leq i\leq n-1$).
We then have $s_{a-1} = s_a-c\leq s_i-\epsilon - c = s_{i-1}-\epsilon$.
Similarly, we have $s_{b+1}\geq s_{j+1}+\epsilon$.
This implies, for example, that $\mrk_M([s_{a-1},s_{b+1}))\preceq \mrk_M([s_{i-1}-\epsilon,s_{j+1}+\epsilon))$, and a similar statement holds for the domains of the other three maps $\chi_i'$ above.
Thus, by composing each maps $\chi_i$ above with the map guaranteed by the definition of $\preceq$, we can define:
\begin{align*}
    \chi_1':\barc(\mrk_M([s_a,s_b)))^\epsilon[2\epsilon:]&\to \barc(\mrk_N([s_i,s_j)))\\
    \chi_2':\barc(\mrk_M([s_{a-1},s_{b+1})))^\epsilon[2\epsilon:] &\to \barc(\mrk_N([s_{i-1},s_{j+1})))\\
    \chi_3':\barc(\mrk_N([s_{a-1},s_b)))^\epsilon[2\epsilon:]&\to \barc(\mrk_M([s_{i-1},s_j)))\\
    \chi_4':\barc(\mrk_N([s_a,s_{b+1})))^\epsilon[2\epsilon:]&\to \barc(\mrk_M([s_i,s_{j+1})))
\end{align*}

The multiset union of the four barcodes in the domains form the barcode of \newline $\PN(M,N)([\overline{S}_\leq(s_i-\epsilon),\overline{S}_\geq(s_j+\epsilon))^\epsilon[2\epsilon:] = \PN(M,N)([s_a,s_b)))^\epsilon[2\epsilon:]$, and the multiset union of the four barcodes in the codomains form the barcode of $\PN(N,M)([s_i,s_j))$.
Hence, we can let $\chi:\barc(\PN(M,N))([s_a,s_b))^\epsilon[2\epsilon:]\to \barc(\PN(N,M))([s_i,s_j))$ be the disjoint union of $\chi_i'$ for $1\leq i\leq 4$.
As each $\chi_i'$ is injective and has $\chi_i'(J)\subseteq J$, these properties will hold for $\chi$ as well.
\end{proof}

\begin{remark}
We can remove the condition $S$ is evenly-spaced, but there is a price to pay for doing so.
If $S$ is not evenly-spaced, let $\mathrm{irreg}(S) := (\max_{1\leq i\leq n-1} s_{i+1}-s_i) - (\min_{1\leq i\leq n-1} s_{i+1}-s_i)$.
We can define erosion distance as before, removing only the evenly-spaced condition.
In this setting, the stability result appears as:
\begin{theorem}\label{thm:mdgm general stability}
Suppose $M,N:\R^2\to \cvec$ are $S$-constructible. Then we have:
\[\dES(\mdgm_M,\mdgm_N)\leq \dI(M,N)+ \mathrm{irreg}(S)\]
\end{theorem}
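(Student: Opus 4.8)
The plan is to prove the sharper inequality
\[\dES(\mdgm_M,\mdgm_N)\le \dE(\mrk_M,\mrk_N)+\mathrm{irreg}(S)\]
and then invoke \autoref{thm:mrk-stability}; this reduces the whole theorem to tracking how the (now uneven) grid spacing interacts with the erosion argument of \autoref{thm:mdgmdEstability}. We may assume $\dI(M,N)<\infty$, so $\dE(\mrk_M,\mrk_N)<\infty$. Pick any $\epsilon$ for which the defining conditions of $\dE(\mrk_M,\mrk_N)$ hold, set $\epsilon':=\epsilon+\mathrm{irreg}(S)$, and fix $s_i\le s_j\in\overline S$. Write $s_a:=\overline{S}_\leq(s_i-\epsilon')$ and $s_b:=\overline{S}_\geq(s_j+\epsilon')$ (using the usual conventions for $s_0$, $s_{n+1}$, and for arguments below $s_1$). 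The goal is to produce an injective containment-matching of barcodes witnessing $\PN(M,N)([s_a,s_b))^{\epsilon'}\preceq_{2\epsilon'}\PN(N,M)([s_i,s_j))$, together with the $M\leftrightarrow N$ symmetric statement, i.e.\ the conditions of \autoref{def:dEmdgm} at level $\epsilon'$; from this $\dES(\mdgm_M,\mdgm_N)\le\epsilon'$, and letting $\epsilon$ descend to $\dE(\mrk_M,\mrk_N)$ finishes.

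First I would record the combinatorial inequalities that let one pass from the tight $\epsilon$-sandwich to the grid-snapped $\epsilon'$-sandwich. By construction $s_a\le s_i-\epsilon'\le s_i-\epsilon$ and $s_b\ge s_j+\epsilon'\ge s_j+\epsilon$, hence $[s_i-\epsilon,s_j+\epsilon)\subseteq[s_a,s_b)$ in $\Dgm$. For the neighbouring indices, since any two consecutive gaps of $S$ differ by at most $\mathrm{irreg}(S)$ we have $(s_i-s_{i-1})-(s_a-s_{a-1})\le\mathrm{irreg}(S)$; combining this with $s_a\le s_i-\epsilon-\mathrm{irreg}(S)$ gives $s_{a-1}\le s_{i-1}-\epsilon$, and symmetrically $s_{b+1}\ge s_{j+1}+\epsilon$. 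Consequently each of the intervals $[s_{i-1}-\epsilon,s_{j+1}+\epsilon)$, $[s_{i-1}-\epsilon,s_j+\epsilon)$, $[s_i-\epsilon,s_{j+1}+\epsilon)$ sits inside the corresponding grid-snapped interval $[s_{a-1},s_{b+1})$, etc. This is the single point at which $\mathrm{irreg}(S)$ enters; in the evenly-spaced case these were exact shifts by the common gap $c$.

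Next I would assemble, for each of the four meta-ranks appearing in the M\"obius formula for $\mdgm$, a chain of three comparisons: (i) the relation supplied by $\dE(\mrk_M,\mrk_N)$ at $\epsilon$, e.g.\ $\mrk_M([s_i-\epsilon,s_j+\epsilon))^{\epsilon}\preceq_{2\epsilon}\mrk_N([s_i,s_j))$, and its three companions and $M\leftrightarrow N$ swaps; (ii) monotonicity of the meta-rank — if $I\supseteq J$ in $\Dgm$ then $\mrk_\ast(I)\preceq\mrk_\ast(J)$, obtained by factoring the horizontal slice maps and applying the induced-matching theorem \cite{bauer2015induced} — applied to the containments of the previous paragraph; and (iii) the elementary fact that $[p,q)\subseteq[p',q')$ implies $[p,q)^{\epsilon'}[2\epsilon':]\subseteq[p',q')^{\epsilon}[2\epsilon:]$, valid because $\epsilon'\ge\epsilon$. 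Composing (ii), (iii), (i) (and passing between $\preceq_{2\epsilon'}$ and $[2\epsilon':]$-truncated barcodes via \autoref{lemma:truncated-barcode}) yields injective containment-matchings
\begin{align*}
\mrk_M([s_a,s_b))^{\epsilon'}&\preceq_{2\epsilon'}\mrk_N([s_i,s_j)), & \mrk_M([s_{a-1},s_{b+1}))^{\epsilon'}&\preceq_{2\epsilon'}\mrk_N([s_{i-1},s_{j+1})),\\
\mrk_N([s_a,s_{b+1}))^{\epsilon'}&\preceq_{2\epsilon'}\mrk_M([s_i,s_{j+1})), & \mrk_N([s_{a-1},s_b))^{\epsilon'}&\preceq_{2\epsilon'}\mrk_M([s_{i-1},s_j)).
\end{align*}
Taking the disjoint union of these four matchings and absorbing the bars that cancel inside $\mdgm_M$ and $\mdgm_N$ exactly as in the proof of \autoref{thm:mdgmdEstability} — so the domain multiset union is $\barc(\PN(M,N)([s_a,s_b)))$ (together with cancelled bars) and the codomain multiset union is $\barc(\PN(N,M)([s_i,s_j)))$ (together with cancelled bars) — produces the required $\PN(M,N)([s_a,s_b))^{\epsilon'}\preceq_{2\epsilon'}\PN(N,M)([s_i,s_j))$. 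The symmetric inequality, and the half-infinite and boundary cases ($[s_i,\infty)$ and $i=1$, where the M\"obius formula has fewer terms, and the degenerate case $s_a<s_1$ where $\PN(M,N)([s_a,s_b))=0$), are handled by the same, shorter bookkeeping.

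The step I expect to be the real obstacle is the neighbour inequality $s_{a-1}\le s_{i-1}-\epsilon$ (and its mirror $s_{b+1}\ge s_{j+1}+\epsilon$): this is precisely what forces the $\mathrm{irreg}(S)$ correction, because when $S$ is not evenly spaced the gap below $s_a$ can be shorter than the gap below $s_i$ by as much as $\mathrm{irreg}(S)$, so the naive $\epsilon$-widening of the neighbouring meta-rank intervals need not reach the required grid point. Once these inequalities are secured, checking that simultaneously enlarging the shift and truncation amounts from $\epsilon,2\epsilon$ to $\epsilon',2\epsilon'$ preserves every bar containment (it does, since $\epsilon'\ge\epsilon$) and running the four-fold $\PN$ bookkeeping over the Grothendieck group $\SPvec$ is routine and identical to the evenly-spaced argument.
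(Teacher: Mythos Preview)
Your proposal is correct and follows essentially the same route as the paper: you set $\epsilon'=\epsilon+\mathrm{irreg}(S)$, redefine $s_a:=\overline{S}_\leq(s_i-\epsilon')$ and $s_b:=\overline{S}_\geq(s_j+\epsilon')$, verify $s_{a-1}\le s_{i-1}-\epsilon$ and $s_{b+1}\ge s_{j+1}+\epsilon$ via the gap-difference bound, and then rerun the four-fold $\PN$ bookkeeping of \autoref{thm:mdgmdEstability}. You are in fact slightly more explicit than the paper in spelling out why enlarging the shift/truncation from $(\epsilon,2\epsilon)$ to $(\epsilon',2\epsilon')$ preserves the bar containments, which the paper leaves to the reader.
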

Note that $\mathrm{irreg}(S)=0$ if and only if $S$ is evenly-spaced, so this result generalizes Theorem \ref{thm:mdgmdEstability}.
The main issue when $S$ is not evenly-spaced is that we could have $s_{a-1}>s_{i-1}-\epsilon$, which causes the proof of \autoref{thm:mdgmdEstability} to fail.
However, the additive term $\mathrm{irreg}(S)$ accounts for this.
In particular, set $s_a:=\overline{S}_\leq(s_i-\epsilon-\mathrm{irreg}(S))$, $c_a:=s_a-s_{a-1}$ and $c_i:=s_i-s_{i-1}$. 
By definition, $c_i-c_a\leq \mathrm{irreg}(S)$, so we have:
\[s_{a-1}=s_a-c_a\leq s_i-\epsilon-\mathrm{irreg}(S)-c_a\leq s_i-\epsilon-c_i=s_{i-1}-\epsilon\]
Similarly, setting $s_b:=\overline{S}_\geq(s_j+\epsilon+\mathrm{irreg}(S))$ we get $s_{b+1}\geq s_{j+1}+\epsilon$.
The proof of \autoref{thm:mdgm general stability} then follows similarly to that of \autoref{thm:mdgmdEstability}, upon using our new definitions for $s_a$ and $s_b$.
\end{remark}

\section{Detailed Proofs for Algorithms}
\label{sec:algorithm-details}

\begin{proof}[Proof of \autoref{prop:mrkalg1}]
By definition, $\mrk_M([i,i])$ is the one-dimensional persistence module $M_x^i$ along the vertical slice in $M$ from $(i,1)$ to $(i,n)$.
This is a sub-path of $\gamma_i$, but as the $\gamma_i$ has an initial portion from $(1,1)\to (i-1)$, we need to both the birth and death time of an interval in $F_{\gamma_i}$ to make the indexing align.
After this, taking the intersection of each interval with $[1,n]$ gives the persistence of said interval specifically within the slice $M_x^i$.
\end{proof}

\begin{proof}[Proof of \autoref{prop:mrkalg2}]
From our ordering, we can start with an interval $I^i_i$ in $\mrk_M([i,i])$ and find its corresponding interval $I_k^{i-1}$ in $\mrk_M([k,i-1])$.
By definition, $I_k^{i-1}$ stems from the image of some interval module summand $I_k^i$ from $\mrk_M([k,k]) = M^k_x$.
By commutativity conditions of persistence modules, we have:
\[\phi_x(k\leq i)= \phi_x(i-1\leq i)\circ \phi_x(k\leq i-1).\]
So it suffices to show $\phi_x(i-1\leq i)(I_k^{i-1}) = I_k^{i-1}\cap I_i^i$.
From our ordering and the vineyards algorithm, we know that $\phi_x(i-1\leq i)(I_k^{i-1})\subseteq I_i^i$, and its immediate that this image is also a subset of $I_k^{i-1}$, thus we have $\phi_x(i-1\leq i)(I_k^{i-1})\subseteq I_k^{i-1}\cap I_i^i$.
The only way this image could not be full is if at some point in the update process as we swept from $F_{\gamma_{i-1}}$ to $F_{\gamma_i}$, the interval $I$ in \texttt{intervals} corresponding to both $I_k^{i-1}$ and $I_i^i$ shortened to become a strict subset of $I_k^{i-1}$ and then re-expanded to include all of $I_k^{i-1}$.
If this were to happen, then we would have $\phi_x(i-1\leq i)(I^{i-1}_k)\subsetneq I^{i-1}_k\cap I_i^i$.

We claim this cannot happen.
To see why, note from Figure \autoref{fig:same-pair-updates} that an interval can only shrink by one in its death coordinate in the third or fourth cases.
In either case, $\sigma_{i+1}$ is the death coordinate which moved to arrive earlier in the filtration after the transposition.
By the order in which we are sweeping down through the column, $\sigma_{i+1}$ can not be moved to arrive later in the filtration by any transposition, hence the death coordinate for the interval corresponding to $\sigma_{i+1}$ can not increase again.
Similarly, an interval can only shrink by one in its birth coordinate in the first and third cases.
In either case, $\sigma_i$ is the birth coordinate which moved to arrive later in the filtration after the transposition.
Since $\sigma_i$ is now after $\sigma_{i+1}$ in the new filtration, by the order we are sweeping down the column, $\sigma_i$ will not be touched by another transposition again in the column.
Hence, the birth coordinate for the interval corresponding to $\sigma_i$ cannot decrease again.

From this, we have shown the image $\phi_x(k\leq i)(I_k^k)= I_k^{i-1}\cap I_i^i$, and so we are done.
\end{proof}

\end{document}